\theoremstyle{plain}
\newtheorem{theor}{Theorem}[section]
\newtheorem{lemma}[theor]{Lemma}
\newtheorem{corol}[theor]{Corollary}
\newtheorem{propo}[theor]{Proposition}
\newtheorem{remar}[theor]{Remark}
\theoremstyle{definition}
\newtheorem{defin}[theor]{Definition}
\newtheorem{examp}[theor]{Example}
\numberwithin{equation}{section}
\newcommand{\ca}[1]{\mathcal{#1}}
\newcommand{\bb}[1]{\mathbb{#1}}
\newcommand{\fr}[1]{\mathfrak{#1}}
\newcommand{\cC}{\mathcal{C}}
\newcommand{\bC}{\mathbb{C}}
\newcommand{\bZ}{\mathbb{Z}}
\newcommand{\fh}{\mathfrak{h}}
\newcommand{\rO}{\mathrm{O}}
\newcommand{\Pin}{\mathrm{Pin}}
\newcommand{\Spin}{\mathrm{Spin}}
\newcommand{\p}[1]{\rlap{\,#1}}
\newcommand{\g}[1]{\gamma_{#1}}
\newcommand{\gO}[2]{\g{#1} O_{#2} - \g{#2}O_{#1}}
\newcommand{\cA}[1]{\mathcal{A}( \gamma_{#1})}
\newcommand{\ib}[3][A]{\smash{\overset{#2}{#3\vphantom{#1}}}}
\newcommand{\ascom}[3][\,\,\,]{\{#2\ib[.]{#1\,}{,}#3\}}
\newcommand{\scom}[3][\,\,\,]{[#2\ib[.]{#1\,}{,}#3]}
\newcommand{\GG}[2]{\ascom[-]{O_{#1}}{O_{#2}}} 
\newcommand{\oO}{\tilde\sigma} 
\DeclareMathOperator{\Cent}{Cent}
\DeclareMathOperator{\Id}{Id}
\DeclareMathOperator{\End}{End}
\DeclareMathOperator{\Span}{span}
\DeclareMathOperator{\sgn}{sign}
\begin{document}
	
\title[Supercentralizers for deformations of the Pin osp dual pair ]{Supercentralizers for deformations of the Pin osp dual pair}
\author{Roy Oste}
\address{Department of Applied Mathematics, Computer Science and Statistics, Faculty of Sciences, Ghent University, Krijgslaan 281, Building S9, 9000 Gent, Belgium}
\email{Roy.Oste@UGent.be}
\thanks{
	The author was supported by a postdoctoral fellowship, fundamental research, of the Research Foundation -- Flanders (FWO), number 12Z9920N}
\subjclass[2010]{16S80, 
	17B10, 
	20F55
}
\keywords{}
\begin{abstract}
In recent work, we examined the algebraic structure underlying a class of elements supercommuting with a realization of the Lie superalgebra $\fr{osp}(1|2)$ inside a generalization of the Weyl Clifford algebra. 
This  generalization contained in particular the deformation by means of Dunkl operators associated with a real reflection group, yielding a rational Cherednik algebra instead of the Weyl algebra. 
The aim of this work is to show that this is the full supercentralizer, give a (minimal) set of generators, and to describe the relation with the $(\Pin(d),\fr{osp}(2m+1|2n))$ Howe dual pair. 
\end{abstract}

\maketitle

%
%
%
%

\section{Introduction}


In recent work~\cite{DOV}, we started the investigation of the algebraic structure underlying a class of elements supercommuting with a realization of the Lie superalgebra $\fr{osp}(1|2)$ inside a generalization of the Weyl Clifford algebra. 
This generalization contained in particular the deformation by means of Dunkl operators associated with a real reflection group, yielding a rational Cherednik algebra instead of the Weyl algebra. 
The aim of this work is to show that this is the full supercentralizer inside the tensor product of rational Cherednik  $H_\kappa$ and a Clifford algebra, give a (minimal) set of generators, and to describe the relation with the $(\Pin(d),\fr{osp}(2m+1|2n))$ Howe dual pair.

%





An explicit realisation of $H_\kappa$ is given by means of Dunkl operators (for the elements of $V$ )
and coordinate variables (the elements of $V^*$), which gives a natural (faithful) action on  the polynomial space $S(V^*)$. 

We will work over $\mathbb{C}$, the field of complex numbers with  $i^2=-1$. 

Throughout, $[\cdot,\cdot]$ will denote the skew-supersymmetric operation on a Lie superalgebra or the supercommutator~\eqref{e:supercomm}.
The notation $\{\cdot,\cdot\}$ will denote the antisupercommutator~\eqref{e:ascom}. 
Moreover, a sign above the comma will sometimes, mostly in Section~\ref{s:rels}, be used to indicate the actual sign used in a(n anti)supercommutator. For instance, if $a$ and $b$ are odd then 
$\scom{a}{b} = ab + ba $, so we will write $\scom[+]{a}{b} $, while if $a$ or $b$ is even, we have $\scom[-]{a}{b} = ab - ba $.

Tensor products are assumed to be $\bZ_2$-graded, unless stated otherwise. The notation $ \odot $ will be used for the supersymmetric tensor product~\eqref{e:supersymtensor}.

Notations are not final. 
 $O_u = \oO_u$ for $u\in V^*$, 


%
%
%
%
%
%
%
%
%

\section{Lie superalgebras}\label{s:LSA}

\subsection{Preliminaries}

Denote  $\bar 0$ and $\bar 1$ the  elements of $\mathbb{Z}_2 = \mathbb{Z}/2\mathbb{Z}$, the residue class ring mod 2.  
The term superspace is used to refer to a $\mathbb{Z}_2$-graded vector space  $\ca V =\ca  V_{\bar 0 } \oplus\ca  V_{\bar 1}$, and superalgebra for a $\mathbb{Z}_2$-graded algebra. The parity or ($\mathbb{Z}_2$-)degree of a homogeneous element $a \in \ca  V_j$ is denoted by $|a| = j \in \bZ_2$. 
The parity reversing functor $\Pi$ sends a superspace $\ca V = \ca V_{\bar 0 } \oplus\ca  V_{\bar 1}$ to the superspace $\Pi(\ca V)$ with the opposite $\mathbb{Z}_2$-grading: 
$\Pi(\ca V_j) = \ca V_{j+\bar 1}$ for $j\in\mathbb{Z}_2$.

A Lie superalgebra $\mathfrak{g}= \mathfrak{g}_{\bar 0}  \oplus \mathfrak{g}_{\bar 1}$ is a superalgebra whose product $[\cdot,\cdot]$ satisfies, for homogeneous elements $x,y\in  \mathfrak{g}_{\bar 0}  \cup \mathfrak{g}_{\bar 1}$ and $z\in  \mathfrak{g}$,
\begin{align}
	[x,y] & = -(-1)^{|x||y|}[y,x] &&\text{(super skew-symmetry)} \\*
	[x,[y,z]] & =[[x,y],z]  +(-1)^{|x||y|}[y,[x,z]] && \text{(super Jacobi identity).}\label{e:Jacobi}
\end{align}
A bilinear form  $b$ on a Lie superalgebra $\mathfrak{g}$ is called invariant if $b([x,y],z) = b(x,[y,z])$ for all elements $x,y,z\in\mathfrak{g}$.

For a superspace $\ca V$, the general linear Lie superalgebra $\fr{gl}(\ca V)$ is formed by equipping
the associative superalgebra $A = \End(\ca V)$  with the supercommutator
\begin{equation}
	\label{e:supercomm}
	[a,b] = ab - (-1)^{|a||b|} ba\p,
\end{equation} 
for $a,b\in A$ homogeneous elements for the $\bZ_2$-grading on $ A$. 
Denote $\bC^{m|n}$ for the superspace $\ca V =\ca  V_{\bar 0 } \oplus\ca  V_{\bar 1}$ with $\ca  V_{\bar 0 } = \bC^m$ and $\ca  V_{\bar 1} = \bC^n$. In this case, the notation $\mathfrak{gl}(m|n)$ is used instead of $\mathfrak{gl}(\ca V)$.

A bilinear form $b$  on a superspace   $\ca V = \ca V_{\bar 0 } \oplus \ca V_{\bar 1}$ 
is called consistent or even, if for $i,j\in\mathbb{Z}_2$, one has $b(\ca V_i ,\ca V_j ) = 0$ unless $i+ j =\bar{0}$. A consistent bilinear form $b$ is said to be supersymmetric (resp.\ skew-supersymmetric), if $b|_{\ca V_{\bar 0 }\times \ca V_{\bar 0 }}$
is
symmetric (resp.\ skew-symmetric) and $b|_{\ca V_{\bar 1}\times \ca V_{\bar 1}}$
is skew-symmetric (resp.\ symmetric). The subspace where $b$ restricts to a skew-symmetric form necessarily has even dimension.

If $b$ is a non-degenerate, consistent, supersymmetric or skew-supersymmetric, bilinear form on $\ca V$, the
orthosymplectic Lie superalgebra $\mathfrak{osp}(b)=\mathfrak{osp}(\ca V,b)$ is the
subalgebra of $\mathfrak{gl}(\ca V)$ that
preserves $b$. 

As Lie superalgebras $\mathfrak{osp}(\ca V,b)\cong\mathfrak{osp}(\Pi(\ca V),\Pi (b))$, where $\Pi (b)$ is the form on $\Pi(\ca V)$ induced by means of the parity reversing functor $\Pi$. If $b$ is supersymmetric, then $\Pi (b)$ is skew-supersymmetric and vice versa.

For $\ca V = \bC^{M|2n}$ with the standard supersymmetric form, the associated orthosymplectic Lie superalgebra is denoted by $\mathfrak{osp}(M|2n)$ or $\mathfrak{osp}^{sy}(M|2n)$.

For $\ca V = \bC^{2n|M}$ with a skew-supersymmetric form, the associated orthosymplectic (or symplectico-orthogonal) Lie superalgebra is 
$\mathfrak{spo}(2n|M)$ or $\mathfrak{osp}^{sk}(2n|M)$. 
Given the isomorphism between them via $\Pi$, the notation $\mathfrak{osp}(M|2n)$ is sometimes used to refer to either of them.

\subsection{\texorpdfstring{$\fr{osp}(1|2)$}{osp(1|2)}}

The Lie superalgebra $\mathfrak{osp}(1|2)$ has a one-dimensional Cartan subalgebra $\mathfrak{h} = \langle h \rangle$ and root system  $\Phi = \{\pm 2 \delta \} \cup \{ \pm \delta \}$, where $\delta\in \mathfrak{h}^*$ is the dual of $h$.
The even subalgebra is $\mathfrak{osp}(1|2)_{\bar 0} \cong \fr{sp}(2)\cong \fr{sl}(2)$ with root system $\{\pm 2 \delta \}$.


The odd root vectors $e_{\delta},e_{-\delta}$ satisfy the relations
\begin{equation}\label{e:osp12r}
	[ e_{\delta} , 	e_{-\delta} ] = ( e_{\delta},e_{-\delta}) h_{\delta} \p,\qquad [h,	e_{\pm\delta}  ] = \pm e_{\pm\delta}\p,
\end{equation}
where $h_{\delta} = (\delta,\delta)h$ is the coroot of $\delta$. Here, $(\cdot,\cdot)$ denotes the unique (up to a constant factor) 
non-degenerate consistent invariant  supersymmetric bilinear form on $\mathfrak{g}$, and also  the (symmetric) non-degenerate restriction of the form to $\mathfrak{h}$, and the induced form on $\mathfrak{h}^*$. 

With the following normalization for the root vectors
\begin{equation}\label{e:osp12n}
	F^{\pm} \colonequals e_{\pm\delta}/\sqrt{(e_{\delta},e_{-\delta})(\delta,\delta)}\p, \qquad 	
	E^{\pm} \colonequals \pm [e_{\pm\delta}, e_{\pm\delta} ] / (2( e_{\delta},e_{-\delta})(\delta,\delta) )\p,
\end{equation}
and denoting also $H=h$, we have $\mathfrak{osp}(1|2) = \Span\{H,F^\pm,E^\pm\}$ with the relations that have nonzero right-hand side given by
\begin{equation}\label{e:osp12re}
	\begin{aligned}[]
		[ F^{+} , 	F^{-} ] & = H	\p,\quad
		&	[ H , 	F^{\pm} ] &= \pm F^{\pm}\p,
		&	
		[ F^{\pm} , 	F^{\pm} ] & = \pm 2\,E^{\pm}\p,
		\\	
		[ E^{+} , 	E^{-} ] & = H\p,
		&	[ H , 	E^{\pm} ] & = \pm2\,E^{\pm}\p,	\quad	&
		[ 	F^{\pm},E^{\mp}  ] & = 	F^{\mp}\p.	
	\end{aligned}
\end{equation}
The even subalgebra  is $ \Span\{H,E^\pm\} \cong \fr{sl}(2)$.

\subsection{Realization and centralizer}

Let $A = A_{\bar0}  \oplus A_{\bar1} $ be an associative unital superalgebra. 
Using the supercommutator~\eqref{e:supercomm}, if there are elements $e_{\delta},e_{-\delta}\in A_{\bar1}$ satisfying 
\begin{equation}\label{e:osp12}
	[ 	[ e_{\delta} , 	e_{-\delta} ] , 	e_{\pm\delta} ] = \pm C\, e_{\pm\delta}\p,
\end{equation}
for a non-zero constant $C$, then there is a realization of $\mathfrak{osp}(1|2)$ in $A$. The constant $C$ is related to the bilinear forms on $\mathfrak{osp}(1|2)$ and on $\mathfrak{h}^*$
by $C=( e_{\delta},e_{-\delta})(\delta,\delta)$. The elements $e_{\delta},e_{-\delta}\in A_{\bar1}$ can be rescaled as in~\eqref{e:osp12n} to have the commutation relations~\eqref{e:osp12re}. 


Now, assume that we have a realization $\pi \colon \mathfrak{osp}(1|2) \to A$ of $\mathfrak{osp}(1|2)$ in $A$, with the product given by the supercommutator~\eqref{e:supercomm} in $A$. 

For a subspace $ B \subset  A$, the supercentralizer in $ A$ is
\begin{equation}
	\label{e:centralizer}
	\Cent_{ A}( B) = \{\, a \in  A \mid [a,b] = 0 \text{ for all } b\in  B \,\} \rlap{\,.}
\end{equation}
The idea is to describe the supercentralizer of $\mathfrak{osp}(1|2)$ in $A$. 
Hereto, we consider an adjoint action of $\mathfrak{osp}(1|2) $ on $ A$: 
\begin{equation}\label{e:ad}	
		\mathfrak{osp}(1|2) \times A \to  A \colon g \mapsto   
		 (g,a) \mapsto [\pi(g),a]  \p.
\end{equation}
For the action~\eqref{e:ad} of $\mathfrak{osp}(1|2) $, in the representation space $ A$, every element of the centralizer $\Cent_{ A}(\mathfrak{osp}(1|2))$ is a copy of the one-dimensional, trivial module. Conversely, the isotypic component of the trivial module is precisely $\Cent_{ A}(\mathfrak{osp}(1|2))$.

The following proposition gives a way to determine $\Cent_{ A}(\mathfrak{osp}(1|2))$ from the centralizer of the even subalgebra $\Cent_{ A}(\mathfrak{osp}(1|2)_{\bar 0})$. 
To this end, we consider the following (even) elements in the universal enveloping algebra 
$ U(\fr{osp}(1|2))$: 	
\begin{equation}\label{e:Pdelta}
	P_+ \colonequals 1 -  F^-F^+	 \quad\text{ and }\quad 	P_{-} \colonequals 1 + F^+F^-\p,
\end{equation}
where we use the normalization~\eqref{e:osp12n}. 

\begin{propo}\label{p:osp12}
	For $\mathfrak{osp}(1|2)$, realized in an associative unital superalgebra $ A$, with  even subalgebra  $ \mathfrak{osp}(1|2)_{\bar0} \cong\fr{sl}(2)$,
	one has 
	\[
	\Cent_{ A}(\mathfrak{osp}(1|2)) = P_{+} \Cent_{ A}(\mathfrak{osp}(1|2)_{\bar 0}) = P_{-} \Cent_{ A}(\mathfrak{osp}(1|2)_{\bar 0})\p,
	\]
	with $P_\pm$ acting on $ A$ as in~\eqref{e:ad}, that is
	\[
	P_{\pm} \colon  A \to  A \colon a \mapsto P_{\pm}(a)=a\mp [ F^{\mp} , [ F^{\pm}, a]]	\p.
	\]
\end{propo}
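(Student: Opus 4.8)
The plan is to first rewrite $P_\pm$ as operators on $A$ and then prove each claimed equality by a pair of inclusions. Since $P_\pm\in U(\mathfrak{osp}(1|2))$ act through the extension of the adjoint action \eqref{e:ad}, the elements $1-F^-F^+$ and $1+F^+F^-$ act as $\Id-\ad(F^-)\ad(F^+)$ and $\Id+\ad(F^+)\ad(F^-)$, which is exactly the formula $P_\pm(a)=a\mp[F^\mp,[F^\pm,a]]$ recorded in the statement. One inclusion is then immediate: if $a\in\Cent_A(\mathfrak{osp}(1|2))$ then $[F^\pm,a]=0$, so $P_\pm(a)=a$, while also $a\in\Cent_A(\mathfrak{osp}(1|2)_{\bar 0})$; hence $a=P_\pm(a)\in P_\pm\Cent_A(\mathfrak{osp}(1|2)_{\bar 0})$. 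Thus the substance of the proof is the reverse inclusion, namely that $P_\pm$ maps the even centralizer into the full centralizer.

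To prove $P_\pm(a)\in\Cent_A(\mathfrak{osp}(1|2))$ for every $a\in\Cent_A(\mathfrak{osp}(1|2)_{\bar 0})$, I would first reduce the verification to the two odd generators: since $H=[F^+,F^-]$ and $E^\pm=\pm\tfrac12[F^\pm,F^\pm]$, the elements $F^+,F^-$ generate $\mathfrak{osp}(1|2)$, and the super Jacobi identity \eqref{e:Jacobi} shows that any element supercommuting with both $F^+$ and $F^-$ supercommutes with all of their brackets. Hence it suffices to check $[F^+,P_\pm(a)]=0$ and $[F^-,P_\pm(a)]=0$.

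The computation itself rests on lifting the relations \eqref{e:osp12re} to operator identities via $\ad([x,y])=[\ad x,\ad y]$. Bearing in mind that the $\ad(F^\pm)$ are odd operators, this yields $\ad(F^\pm)^2=\pm\ad(E^\pm)$, the anticommutator identity $\ad(F^+)\ad(F^-)+\ad(F^-)\ad(F^+)=\ad(H)$, as well as $[\ad H,\ad F^\pm]=\pm\ad F^\pm$ and $[\ad E^\mp,\ad F^\pm]=-\ad F^\mp$. Because $a$ is annihilated by $\ad H,\ad E^+,\ad E^-$, one gets in particular $\ad(F^\pm)^2(a)=0$ and $\ad(H)\ad(F^\pm)(a)=\ad(F^\pm)(a)$. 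Substituting $P_+(a)=a-\ad(F^-)\ad(F^+)(a)$ and using $\ad(F^+)\ad(F^-)=\ad(H)-\ad(F^-)\ad(F^+)$ collapses $\ad(F^+)\ad(F^-)\ad(F^+)(a)$ to $\ad(H)\ad(F^+)(a)=\ad(F^+)(a)$, so that $[F^+,P_+(a)]=\ad(F^+)(a)-\ad(F^+)(a)=0$; likewise $\ad(F^-)^2=-\ad(E^-)$ together with $[\ad E^-,\ad F^+]=-\ad F^-$ gives $[F^-,P_+(a)]=\ad(F^-)(a)-\ad(F^-)(a)=0$. The case of $P_-$ is the mirror image, obtained by interchanging the roles of $F^+$ and $F^-$. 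Combining the two inclusions then yields $\Cent_A(\mathfrak{osp}(1|2))=P_+\Cent_A(\mathfrak{osp}(1|2)_{\bar 0})=P_-\Cent_A(\mathfrak{osp}(1|2)_{\bar 0})$, since both $P_+$ and $P_-$ are shown to land in the full centralizer and to restrict to the identity there.

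I expect the main obstacle to be purely the sign bookkeeping: translating the defining relations \eqref{e:osp12re} into operator identities requires care, because $\ad$ of an odd--odd bracket produces an \emph{anti}commutator of operators rather than a commutator, and the signs in $\ad(F^\pm)^2=\pm\ad(E^\pm)$ and in the mixed relations $[\ad E^\mp,\ad F^\pm]=-\ad F^\mp$ must be kept consistent throughout. Once these operator identities are pinned down correctly, the cancellations are forced and no further difficulty arises.
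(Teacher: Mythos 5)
Your proof is correct and takes essentially the same route as the paper's: the easy inclusion because $P_\pm$ acts as the identity on $\Cent_A(\mathfrak{osp}(1|2))$, and the reverse inclusion by checking $[F^\pm,P_\pm(a)]=0$ for $a$ in the even centralizer via the super Jacobi identity and the fact that $F^\pm$ generate $\mathfrak{osp}(1|2)$ --- your $\ad$-operator identities are exactly the paper's nested-bracket computations, and your symmetry argument for $P_-$ is an acceptable substitute for the paper's shortcut $P_-=P_+ + H$, which makes the two actions coincide on $\Cent_A(\mathfrak{osp}(1|2)_{\bar 0})$. One cosmetic slip: the claimed identity $\ad(H)\ad(F^\pm)(a)=\ad(F^\pm)(a)$ should read $\ad(H)\ad(F^\pm)(a)=\pm\ad(F^\pm)(a)$, though you apply the correct sign in each case where it is actually used.
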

\begin{proof}
	First, note that, by means of the relations~\eqref{e:osp12re}, 
	\begin{equation}\label{e:P-dPd}
		P_{-} = 1 + F^+F^-  = 1 + (-F^-F^+ + H)  = P_+ + H \p,
	\end{equation}
	so when $[H,\cdot]$ acts by zero, as is the case on  $\Cent_{ A}(\mathfrak{osp}(1|2)_{\bar 0})$, the actions of $P_{-}$ and  $P_+$ coincide.
	
	By definition, we have 
	$
	\Cent_{ A}(\mathfrak{osp}(1|2)) \subset \Cent_{ A}(\mathfrak{osp}(1|2)_{\bar 0})
	$
	and $
	P_+ a = a
	$ for  $a \in \Cent_{ A}(\mathfrak{osp}(1|2))$,
	hence 
	$
	\Cent_{ A}(\mathfrak{osp}(1|2)) \subset P_+\Cent_{ A}(\mathfrak{osp}(1|2)_{\bar 0})
	$. 
	
	To prove the other inclusion, let $a \in \Cent_{ A}(\mathfrak{osp}(1|2)_{\bar 0})$, we show that $P_+(a) \in \Cent_{ A}(\mathfrak{osp}(1|2))$. We have, using the super Jacobi identity~\eqref{e:Jacobi}, 
	\begin{align*}
		[F^+,	P_+(a)]
		& = [F^+,a] - [F^+,[F^-,[F^+,a]]]\\
		& = [F^+,a] - [[F^+,F^-],[F^+,a]]+ [F^-,[F^+,[F^+,a]]]\\
		& = [F^+,a] - [H,[F^+,a]]+ [F^-,[E^+,a]]\\
		& = [F^+,a] - [[H,F^+],a]- [F^+,[H,a]]\\
		& = [F^+,a] - [F^+,a]\p.
	\end{align*}
	Meanwhile, for $a \in \Cent_{ A}(\mathfrak{osp}(1|2)_{\bar 0})$, we also have
	\begin{align*}
		[F^-,	P_+(a)]
		& = [F^-,a] - [F^-,[F^-,[F^+,a]]]\\
		& = [F^-,a] + [E^-,[F^+,a]]\\
		& = [F^-,a] + [[E^-,F^+],a]+ [F^+,[E^-,a]]\\
		& = [F^-,a] - [F^-,a]\p.
	\end{align*}
	As $F^\pm$ generate $\mathfrak{osp}(1|2)$, this proves the other inclusion $ P_+\Cent_{ A}(\mathfrak{osp}(1|2)_{\bar 0})\subset
	\Cent_{ A}(\mathfrak{osp}(1|2)) 
	$.
\end{proof}

The next result gives a way to obtain, under certain conditions, the centralizer $\Cent_{ A}(\mathfrak{osp}(1|2)_{\bar{0}})$, which can then be used to describe the supercentralizer $\Cent_{ A}(\mathfrak{osp}(1|2))$ by the previous result.

\begin{propo}\label{p:sl2}
	Let $\mathfrak{sl}(2)$ be realized in a unital associative algebra $ A$
	by the elements $e_{\alpha},e_{-\alpha},h_{\alpha}$ satisfying the commutation relations 
	\[
	[e_{\alpha},e_{-\alpha}] = h_{\alpha}\p, \qquad [ h_{\alpha},e_{\pm\alpha} ]=\pm(\alpha,\alpha) e_{\pm\alpha}\p.
	\]	
	If $\Cent_{ A}(\fh)$ decomposes into only finite-dimensional irreducible $\mathfrak{sl}(2)$-modules for the adjoint action of $\mathfrak{sl}(2)$ on $ A$, then 
	\[
	\Cent_{ A}(\mathfrak{sl}(2)) = P_{\alpha} \Cent_{ A}(\fh) = P_{-\alpha} \Cent_{ A}(\fh) \p.
	\]
	where 
	\begin{equation}\label{e:Palpha}
		P_{\alpha} = \sum_{k \geq 0} \frac{(-2)^k e_{-\alpha}^k e_{\alpha}^k}{(\alpha,\alpha)^k k!(k+1)!} \p,
	\end{equation}
	with the elements of $\mathfrak{osp}(1|2)$ acting on $\ca{A}$ via the action~\eqref{e:ad}.
\end{propo}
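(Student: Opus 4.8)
The plan is to recognise $P_\alpha$ as a projection of the $\mathfrak{sl}(2)$-module $A$ (under the adjoint action~\eqref{e:ad}) onto its trivial isotypic component, restricted to the zero-weight space. Since $\ad$ is a homomorphism, the operators $\ad e_\alpha$, $\ad e_{-\alpha}$, $\ad h_\alpha$ satisfy the same relations, so $A$ is an $\mathfrak{sl}(2)$-module; the $\ad h_\alpha$-kernel (the zero-weight space) is exactly $\Cent_A(\fh)$, and the space of invariants is exactly $\Cent_A(\mathfrak{sl}(2))$. For notational ease I would first pass to the standard basis $E=\ad e_\alpha$, $F=\tfrac{2}{(\alpha,\alpha)}\ad e_{-\alpha}$, $\mathcal H=\tfrac{2}{(\alpha,\alpha)}\ad h_\alpha$, which satisfy $[\mathcal H,E]=2E$, $[\mathcal H,F]=-2F$, $[E,F]=\mathcal H$; a direct substitution into~\eqref{e:Palpha} then gives the clean form $P_\alpha=\sum_{k\ge0}\frac{(-1)^k}{k!(k+1)!}F^kE^k$. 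The hypothesis, read as saying that the submodule generated by $\Cent_A(\fh)$ is a direct sum of finite-dimensional irreducibles, guarantees that $E^k a=0$ for $k$ large whenever $a\in\Cent_A(\fh)$, so $P_\alpha a$ is a well-defined finite sum. The whole statement then reduces to two claims: (i) $P_\alpha$ maps $\Cent_A(\fh)$ into $\Cent_A(\mathfrak{sl}(2))$, and (ii) $P_\alpha$ acts as the identity on $\Cent_A(\mathfrak{sl}(2))$.

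The heart of the argument, and the step I expect to be the main obstacle, is to show that $E$ annihilates $P_\alpha a$ for every $a\in\Cent_A(\fh)$. First I would record the commutator identity $EF^k=F^kE+kF^{k-1}(\mathcal H-(k-1))$, obtained by expanding $[E,F^k]=\sum_{j=0}^{k-1}F^j\mathcal H F^{k-1-j}$ and using $\mathcal H F^m=F^m(\mathcal H-2m)$. The key observation is that $E^k a$ has weight $2k$, so $(\mathcal H-(k-1))E^ka=(k+1)E^ka$ and hence
\[
EF^kE^k a = F^kE^{k+1}a + k(k+1)\,F^{k-1}E^ka .
\]
Substituting this into $EP_\alpha a=\sum_{k\ge0}\frac{(-1)^k}{k!(k+1)!}EF^kE^ka$ and simplifying the second contribution via $\frac{k(k+1)}{k!(k+1)!}=\frac{1}{(k-1)!k!}$, a shift of the summation index $k\mapsto k-1$ shows that the two resulting series are negatives of one another and cancel, giving $EP_\alpha a=0$. (One could alternatively verify, component by component, the scalar identity $\sum_{k=0}^{\ell}\frac{(-1)^k(\ell+k)!}{k!(k+1)!(\ell-k)!}=0$ for $\ell\ge1$, but this telescoping is cleaner and avoids the combinatorics.)

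It then remains to assemble the conclusion. Since each summand $F^kE^k$ preserves weights, $P_\alpha a$ is again of weight zero, so $\mathcal H\,P_\alpha a=0$. Thus $P_\alpha a$ is a weight-zero vector killed by $E$; inside a direct sum of finite-dimensional irreducibles such a vector can only have nonzero components in trivial summands, whence $P_\alpha a\in\Cent_A(\mathfrak{sl}(2))$, proving (i). Claim (ii) is immediate: if $a\in\Cent_A(\mathfrak{sl}(2))$ then $Ea=0$, so only the $k=0$ term survives and $P_\alpha a=a$. Combining (i) and (ii) with the inclusion $\Cent_A(\mathfrak{sl}(2))\subseteq\Cent_A(\fh)$ yields $\Cent_A(\mathfrak{sl}(2))=P_\alpha\Cent_A(\mathfrak{sl}(2))\subseteq P_\alpha\Cent_A(\fh)\subseteq\Cent_A(\mathfrak{sl}(2))$, hence $\Cent_A(\mathfrak{sl}(2))=P_\alpha\Cent_A(\fh)$. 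Finally, the identical computation with the roles of $e_\alpha$ and $e_{-\alpha}$ interchanged (the lowest-weight version) shows $P_{-\alpha}$ has the same image, establishing $P_\alpha\Cent_A(\fh)=P_{-\alpha}\Cent_A(\fh)$.
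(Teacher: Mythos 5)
Your proof is correct and follows essentially the same route as the paper: the paper's proof rests on the identical commutator identity $e_{\alpha}e_{-\alpha}^k e_{\alpha}^k = e_{-\alpha}^k e_{\alpha}^{k+1} + k\, e_{-\alpha}^{k-1} e_{\alpha}^k\bigl(h_{\alpha}+(\alpha,\alpha)(k+1)/2\bigr)$ (your identity $EF^kE^ka = F^kE^{k+1}a + k(k+1)F^{k-1}E^ka$ after normalization), the same finiteness observation, and the same conclusion that $P_{\alpha}v$ is a weight-zero highest weight vector, hence lies in a trivial summand. Your write-up merely makes explicit the telescoping cancellation, the identity $P_{\alpha}a=a$ on $\Cent_{A}(\mathfrak{sl}(2))$, and the final chain of inclusions, all of which the paper leaves implicit.
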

\begin{proof}

	The elements of $\Cent_{ A}(\fh)$ form the space of weight zero for the action of $\mathfrak{sl}(2)$ given by~\eqref{e:ad}.
	Assume $\Cent_{ A}(\fh)$ decomposes into only finite-dimensional irreducible $\mathfrak{sl}(2)$-modules. 
	For $v\in \Cent_{ A}(\fh)$, 
	the sum in~\eqref{e:Palpha} reduces to a finite one when acting on $v$, and as
	$e_{\alpha} P_{\alpha}v = 0 $, which follows via 
	\begin{align*}
		e_{\alpha}	e_{-\alpha}^k e_{\alpha}^k =&\	e_{-\alpha}^k e_{\alpha}^{k+1} + k\, e_{-\alpha}^{k-1}(h_{\alpha}-(\alpha,\alpha)(k-1)/2) e_{\alpha}^k 	\\
		=&\	e_{-\alpha}^k e_{\alpha}^{k+1} + k\, e_{-\alpha}^{k-1} e_{\alpha}^k(h_{\alpha}+(\alpha,\alpha)(k+1)/2) \p,
	\end{align*}
	so $P_{\alpha} v$  is a highest weight vector of a finite-dimensional $\mathfrak{sl}(2)$-module with weight zero, hence a trivial module.
\end{proof}	

\begin{remar}\label{r:proj}
	The formulas~\eqref{e:Pdelta} and~\eqref{e:Palpha} correspond to a so-called ``extremal projector'', see for instance~\cite{AshSmiTol79,Tolstoi1985,Zhelobenko}, when acting on a space of weight zero.
	
	For $\fr{g}$ a basic classical Lie (super)algebra (or generalization thereof), the extremal projector $P$ is an element of an extension of the universal enveloping algebra $U(\fr{g})$ containing formal power series with coefficients in the field of fractions of $U(\mathfrak{h})$, also called a transvector algebra or Mickelsson-Zhelobenko algebra (localization with respect to $\fh$) \cite{Zhelobenko,RE}. The element $P$ is the unique nonzero solution to the equations
	\begin{equation}\label{e:exproj}
		P^2 = P, \qquad e_{\alpha} P = 0 = P e_{-\alpha} \p,\quad  \text{for all positive roots }\alpha\text{ of } \fr{g}\p.
	\end{equation}
	Hence, when acting on a representation space it would project onto highest weight vectors.
	
	However, for Lie superalgebras with isotropic roots, when acting on a space of weight zero the formula for the extremal projector $P$ can result in denominators becoming zero.
\end{remar}

We now consider a lemma with some properties of $P_\pm$ that will be used in Section~\ref{s:osp12}. Similar properties hold for $P_{\alpha}$.

\begin{lemma}\label{l:Pdelta}	For a realization of $\mathfrak{osp}(1|2)$ in  $ A$, and $P_{\pm}$ given by~\eqref{e:Pdelta}, let $a,b,c\in  A$, then 
	\[
	P_{\pm} (a+b) = P_{\pm} (a)+ P_{\pm} ( b) \rlap{\,.}
	\]
	If $a\in\Cent_{  A}(\mathfrak{osp}(1|2))$, then 
	\[
	P_{\pm} (a) = 	a\rlap{\,.}
	\]	
	If $a\in\Cent_{  A}(\mathfrak{osp}(1|2))$ or $b\in\Cent_{  A}(\mathfrak{osp}(1|2))$, then 
	\[
	P_{\pm}  (ab) = 	P_{\pm}  (a)P_{\pm} (b)\rlap{\,.}
	\]	
	If  $a,c\in\Cent_{  A}(\mathfrak{osp}(1|2))$, then 
	\[
	P_{\pm}  (abc) = 	aP_{\pm} (b)c\rlap{\,.}
	\]
\end{lemma}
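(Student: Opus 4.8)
The four claims all concern the linear operator $P_\pm \colon A \to A$, $a \mapsto a \mp [F^\mp,[F^\pm,a]]$ furnished by \cref{p:osp12}, and the plan is to establish them in the stated order, since each feeds into the next. Additivity is immediate: the supercommutator is bilinear and the identity map is linear, so $P_\pm$ is linear. This first claim also lets me assume $a,b,c$ homogeneous in the remaining three statements, because $\Cent_A(\mathfrak{osp}(1|2))$ is a graded subalgebra and both sides of each identity are additive.

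For the second claim I would simply invoke the fact already used in the proof of \cref{p:osp12}: if $a\in\Cent_A(\mathfrak{osp}(1|2))$ then $[F^\pm,a]=0$, as $F^\pm$ generate $\mathfrak{osp}(1|2)$, so the iterated bracket in $P_\pm(a)$ vanishes and $P_\pm(a)=a$.

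The third claim is the only genuine computation. For homogeneous $a$ in the centralizer I would expand the inner bracket by the super-Leibniz rule $[F^\pm,xy]=[F^\pm,x]\,y+(-1)^{|x|}x\,[F^\pm,y]$, which holds because $F^\pm$ is odd. The term $[F^\pm,a]$ drops out, leaving $[F^\pm,ab]=(-1)^{|a|}a\,[F^\pm,b]$; applying $[F^\mp,\cdot]$ a second time and again discarding $[F^\mp,a]=0$ produces a second factor $(-1)^{|a|}$, so the two signs combine to $(-1)^{2|a|}=1$ and the double bracket reduces to $a\,[F^\mp,[F^\pm,b]]$. Hence $P_\pm(ab)=a\,P_\pm(b)$, which is $P_\pm(a)P_\pm(b)$ by the second claim. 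The case with $b$ in the centralizer is symmetric: here $[F^\pm,b]=0$ lets $b$ pass to the right of both brackets, and the accompanying Koszul signs again multiply to $1$, giving $P_\pm(ab)=P_\pm(a)\,b=P_\pm(a)P_\pm(b)$.

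Finally, the fourth claim follows by applying the third twice, writing $P_\pm(abc)=P_\pm\bigl(a(bc)\bigr)=a\,P_\pm(bc)$ with $a$ in the centralizer, and then $P_\pm(bc)=P_\pm(b)\,c$ with $c$ in the centralizer, to obtain $P_\pm(abc)=a\,P_\pm(b)\,c$. The one delicate point I expect is the sign bookkeeping in the third claim: one must check that the two factors of $(-1)^{|a|}$ (respectively the single sign in the $b$-case) genuinely cancel, and this cancellation relies precisely on $F^\pm$ being odd. Everything else is formal manipulation.
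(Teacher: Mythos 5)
Your proof is correct and takes essentially the same approach as the paper: linearity of the supercommutator, the definition of the supercentralizer, and the super-Leibniz expansion of $[F^\mp,[F^\pm,ab]]$ with the two Koszul signs $(-1)^{|a|}$ cancelling (and your reduction to homogeneous elements is legitimate, since the supercentralizer is a graded subalgebra). The only cosmetic difference is that you deduce the final identity by applying the product rule twice to $a(bc)$, whereas the paper simply notes it follows by the same direct computation.
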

\begin{proof}
	The first relation follows immediately from the bilinearity of the supercommutator and the second relation from the 	
	 definition of $\Cent_{ A}(\mathfrak{osp}(1|2))$.
	
	For the third result, assume $a$ is a homogeneous element for the $\bZ_2$-grading on $A$. With the  action of $\mathfrak{osp}(1|2)$ on $A$ given by~\eqref{e:ad}, we have
	\[
	P_{+}(ab)
	= 	ab -  F^-F^+(ab)
	=   ab -  F^-(F^+(a)b + (-1)^{|a|}aF^+(b))
	\p.
	\]
	On the one hand, if $a\in\Cent_{A}(\mathfrak{osp}(1|2))$, then $F^{\pm}(a)=0$, so this becomes	
	\[
P_{+}(ab)
=   ab - (-1)^{|a|} (F^-(a)F^+(b) + (-1)^{|a|}a (F^-F^+b))
= a P_{+}(b)
\p.
\]
	On the other hand, if $b\in\Cent_{  A}(\mathfrak{osp}(1|2))$, so $F^{\pm}(b)=0$, this becomes	
	\begin{align*}
		P_{+}(ab)
		& =  ab -   (F^-F^+a)b  = P_+(a) b\p.
	\end{align*}
	
	The final relation follows in the same manner.
\end{proof}

\begin{examp}
	In the universal enveloping algebra $ U(\fr{osp}(1|2))$, the
	centralizer of the even subalgebra is generated by the	
	 element $F^+F^--F^-F^+ $ and the constants.  
	 This follows by classical invariant theory.
	Indeed, using~\eqref{e:osp12re}, we have
	\begin{align*}
		[E^\pm, F^+F^--F^-F^+ ] = \mp (F^\pm)^2 \pm (F^\pm)^2= 0 \p.
	\end{align*}

	Now, applying $P_+$ and using~\eqref{e:osp12re}, we find
	\begin{align*}
		P_+(F^+F^-   ) 
		& =  F^+ F^-   - [F^-,[F^+,F^+ F^-  ]] \\
		& =  F^+ F^-  
		- [F^-,2E^+F^- 
		- F^+ H]\\
		& =  F^+ F^- 
		- 2F^+F^- 
		+4E^+E^-
		+H^2
		-F^+  F^- \\
		& =  H^2
		+4E^+E^-
		-2F^+  F^- 
		\p,
	\end{align*} 
	with a similar expression for 
	\begin{align*}
		P_+(F^-F^+   ) 
		& =  F^- F^+   - [F^-,[F^+,F^- F^+  ]] \\
		& =  F^- F^+  
		- [F^-,HF^+ 
		- 2F^- E^+]\\
		& =  F^- F^+ 
		-F^-F^+ - H^2 -4E^-E^+ - 2F^-F^+
		\\
		& =
		- H^2 -4E^-E^+  -2F^- F^+ 
		\p.
	\end{align*} 
	Combining the two yields that $P_+(F^+ F^-  -  F^-F^+   )$ is  proportional to the quadratic Casimir element of $ U(\fr{osp}(1|2))$:
	\begin{equation}\label{e:Casiosp}
		\Omega_{\fr{osp}} = H^2
		+2(E^+E^-+E^-E^+)
		-(F^+  F^- -F^-F^+) \p,
	\end{equation} 
	where $\Omega_{\fr{sl}(2)} = H^2
	+2(E^+E^-+E^-E^+)$ is the quadratic Casimir element of $U(\fr{sl}(2))$. 
	Note that $F^+ F^-  -  F^-F^+$ is related to the $\fr{osp}(1|2)$ Scasimir element~\cite{Casi}:
	\begin{equation}\label{e:SCasiosp}
		S = F^+F^- -F^-F^+ + 1/2 \p,
	\end{equation}
	which squares to $S^2 = \Omega_{\fr{osp}} + 1/4$. 
	The Scasimir element $S$ 
	commutes with the even elements and anticommutes with the odd elements, so it antisupercommutes with $ U(\fr{osp}(1|2))$ since its parity is even.
	By the above we also have 
	\begin{equation}\label{e:SCasiosprel}
		P_\pm(S) = 2\Omega_{\fr{osp}} + 1/2 = 2S^2\p.
	\end{equation}
\end{examp}

\subsection{Generalized symmetries}\label{s:gensyms}

Let $A$ be an associative unital (super)algebra. 
We say that an element $a\in A$ is a generalized symmetry of $F\in A$ if there exists $b\in A$ such that $F\, a = b\, F$. Note that $a$ preserves the kernel of $F$. 

Extremal projectors and transvector algebras can be used to construct generalized symmetries of either the positive or negative root vectors of a Lie (super)algebra realized in $A$, see also \cite{Zhelobenko,RE}. 
Note that the extremal projectors can contain 
fractions of $U(\mathfrak{h})$, so depending on the algebra $A$ one works in, a multiplication by elements of $U(\mathfrak{h})$ can be required to cancel denominators. 

We now consider the case of $\fr{osp}(1|2)$, which we will use in Section~\ref{s:gensyms2} for an explicit realization. 
Define 
\begin{equation}\label{e:Q}
	Q^{\pm} \colon \ca A \to \ca A \colon a \mapsto Q^{\pm}(a)=(H\pm1)a\mp F^{\mp}  [ F^{\pm}, a]	\p.	
\end{equation}

\begin{propo}\label{p:gensym}
Let $a \in A$ be such that $[E^-,a] = b F^-$ for some $b \in A$, then
\[
Q^-(a) = (H-1)a - F^+ [F^-,a] 
\]
is a generalized symmetry of $F^-$.
\end{propo}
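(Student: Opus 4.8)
The goal is to exhibit an element $b' \in A$ with $F^- Q^-(a) = b' F^-$, since by the definition of generalized symmetry this is exactly the condition that $Q^-(a)$ be a generalized symmetry of $F^-$. By linearity of $Q^-$ and of the defining condition I may assume $a$ is homogeneous of parity $|a|$, and the whole plan is to rewrite $F^- Q^-(a)$ so as to push every occurrence of $F^-$ to the far right.

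First I would record the three structural consequences of \eqref{e:osp12re} that do the work: from $[H,F^-]=-F^-$ one gets $F^-(H-1)=HF^-$; from $[F^+,F^-]=H$ (an anticommutator, since both root vectors are odd) one gets $F^-F^+=H-F^+F^-$; and from $[F^-,F^-]=-2E^-$ one gets $(F^-)^2=-E^-$. Substituting the first two into $F^-Q^-(a)=F^-(H-1)a-F^-F^+[F^-,a]$ and expanding $[F^-,a]=F^-a-(-1)^{|a|}aF^-$, a short manipulation collapses the leading $H$-terms via the identity $F^-a-[F^-,a]=(-1)^{|a|}aF^-$ and leaves
\[
F^- Q^-(a) = (-1)^{|a|}\,HaF^- + F^+(F^-)^2 a - (-1)^{|a|}\,F^+F^- a F^- .
\]
Two of these three terms already end in $F^-$; the entire difficulty is concentrated in the middle term $F^+(F^-)^2 a$, equivalently $-F^+E^- a$, in which no factor of $F^-$ sits on the right.

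This is exactly where the hypothesis enters, and it is the crux of the argument. Writing $E^-=-(F^-)^2$, the assumption $[E^-,a]=bF^-$ becomes the statement $(F^-)^2 a = a(F^-)^2 - bF^-$. Commuting $(F^-)^2$ past $a$ in this controlled way turns the obstructing term into $F^+a(F^-)^2 - F^+bF^- = (F^+aF^-)F^- - (F^+b)F^-$, both summands now ending in $F^-$. The point I would emphasise is that an \emph{arbitrary} commutator $[(F^-)^2,a]$ could not be factored through $F^-$ on the right; it is precisely the prescribed form $bF^-$ that guarantees the leftover commutator term also ends in $F^-$, so the hypothesis is genuinely indispensable here rather than a convenience.

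Collecting the pieces then gives $F^-Q^-(a)=b'F^-$ with
\[
b' = (-1)^{|a|}\bigl(Ha - F^+F^- a\bigr) + F^+ a F^- - F^+ b ,
\]
which proves that $Q^-(a)$ is a generalized symmetry of $F^-$. The only real obstacle is the bookkeeping required to isolate the single $E^-$-term and to recognise that the hypothesis is the exact input needed to push its image into the right ideal generated by $F^-$; everything else is routine use of the $\mathfrak{osp}(1|2)$ relations, and the analogous statement for $Q^+$ and $F^+$ follows by the symmetric computation.
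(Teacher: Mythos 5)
Your proof is correct and takes essentially the same route as the paper's: both push $F^-$ to the right using $F^-(H-1)=HF^-$, $F^-F^+=H-F^+F^-$, $(F^-)^2=-E^-$, and invoke the hypothesis $[E^-,a]=bF^-$ to handle the one term not ending in $F^-$. Your final $b'=(-1)^{|a|}(Ha-F^+F^-a)+F^+aF^--F^+b$ agrees with the paper's answer, which merely regroups it as $(-1)^{|a|}\bigl(Q^-(a)+a-(-1)^{|a|}F^+b\bigr)$.
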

\begin{proof}
 Assume $a$ is homogeneous for the $\bZ_2$-grading
\begin{align*}
	F^-Q^-(a)  & = F^-(H-1)a - F^-F^+ [F^-,a] \\* 
	& = HF^-a - (H-F^+F^-  )  [F^-,a]    \\*  
	& =(-1)^{|a|} H a F^- + F^+F^- (F^-a  - (-1)^{|a|} a F^- )   \\* 
		& = (-1)^{|a|} H a F^-  - F^+E^- a  - F^+(-1)^{|a|} a F^-   \\* 
	& = (-1)^{|a|} H a F^- - F^+ ( a E^- + bF^-)
	 - (-1)^{|a|} F^+ F^- aF^-   \\* 
	 & = (-1)^{|a|}( H a  + (-1)^{|a|} F^+  a F^- -(-1)^{|a|} F^+b
	 - F^+ F^- a)F^-  \\* 
	 & = (-1)^{|a|}( Q^-(a) + a  -(-1)^{|a|} F^+b   )F^- 
\end{align*}
which shows that $Q^-(a) $ is a generalized symmetry of $F^-$. 
\end{proof}
A similar result holds for $F^+$ using $Q^+$.

More generally, we can take $a\in A$ such that for some $b\in A$
\[
F^- [F^-,a] = F^-(F^-a  - (-1)^{|a|} a F^- ) = b F^-\p,
\]
which is the case if for some $c\in A$
\[
F^-F^-a  = c F^-\p.
\]

\section{Dunkl realization} \label{s:Dd}

We consider a complex vector space $ V\cong \mathbb{C}^{d}$, for a positive integer $d$. 
The ring of polynomial functions on $V$ is the symmetric algebra $S(V^*)$ of the dual space $V^*$. 
In the next sections our focus will be on the dual space $V^*$.  
The notation $\langle \cdot,\cdot \rangle$ will denote the natural bilinear pairing between a space and its dual. 

\subsection{Bilinear form}\label{s:bilform}
Let $B$ denote a non-degenerate symmetric bilinear form on $V$. 
The orthogonal group $\rO \colonequals\rO(V,B) \cong\rO(d,\bb C)$ is the group of invertible linear transformations of $V$ that preserve the form $B$. The action of $\rO$ on $V$ is naturally extended to $V^*$ as the contragradient action, and in turn also to tensor products of those spaces. 
 There is an isomorphism $\beta \colon V \to V^*\colon v \mapsto \beta(v) 
 $ given by
\begin{equation}\label{e:beta}
	\langle \beta(u_1) , u_2 \rangle
	 = B(u_1,u_2)\p, \quad\text{for } u_1,u_2\in V\p,
\end{equation}
which commutes with the action of $\rO$. 
Hence, the spaces $V$ and $V^*$ are isomorphic as $\rO$-modules. 
We will use $\beta$ also to denote its inverse so $\beta$ becomes an involution of $V\oplus V^*$. Moreover, we will denote by $B$ also the induced bilinear form on $V^*$ given by $B(u,v) = B(\beta(u),\beta(v))$ for $u,v\in V^*$.

\subsection{Bases}\label{s:bases}
If no specific properties are needed, $v_1^*,\dotsc,v_d^*$ will denote a basis of $V^*$, dual to a basis $v_1,\dotsc,v_d$ of $V$,  so $ \langle v^*_p,v_k\rangle = \delta_{p,q}$ for $p,q\in \{1,\dots,d\}$. 
In terms of these bases, for $u \in V^*$ and $v\in V$ we have
\begin{equation}\label{e:B}
	\beta(v) = \sum_{p=1}^d B( v, v_p) v^*_p\p, \qquad
	\beta(u) = \sum_{p=1}^d B( u, v^*_p) v_p\p,
\end{equation}
and
\begin{equation}\label{e:BB}
	v  = \sum_{p=1}^d B( v, v_p) \beta(v^*_p)
	\p, \qquad 	
	u = \sum_{p=1}^d B( u, v^*_p)  \beta(v_p)
		\p.
\end{equation}
so 
\begin{equation}\label{e:BBB}
	v  = \sum_{p,q=1}^d  B(v,v_p) B(v_p^*,v_q^*) v_q 
		\p, \qquad 	
		u =\sum_{p,q=1}^d  B(u,v_p^*) B(v_p,v_q) v_q^* 
	\p.
\end{equation}

For brevity, we will sometimes denote $B_{pq} = B(v_p,v_q)$.

When needed, $y_1,\dotsc,y_d$ will denote a basis of $V$ and  $x_1,\dotsc,x_d\in V^*$ its dual basis, such that $\delta_{j,k} =\langle x_j,y_k\rangle =  B(x_j,x_k) = B(y_j,y_k)$. 
Note that the involution $\beta$ sends $y_j$ to $x_j$ and vice versa. 

Let $V^*=V^+\oplus V^0 \oplus V^-$ be a Witt decomposition of $V^*$ where $V^+$ and $V^-$ are complementary maximal $B$-isotropic subspaces of $V^*$ of dimension $\ell = \lfloor d/2 \rfloor$, and $V^0 = \emptyset$ for $d$ even while $V^0$ is anisotropic and one-dimensional for $d$ odd. 
Let $z_1^+,\dotsc,z_\ell^+$ denote a basis of $V^+$ and $z_1^-,\dotsc,z_\ell^-$ a basis of $V^-$ such that $B(z_j^+,z_k^-) = \delta_{j,k}/2$. 
For $d$ odd, denote $z_0$ an element of $V^0$ satisfying $B(z_0,z_0) = 1$. 
The dual basis of $V$ is given by $w_j^{\pm} \colonequals 2\beta(z_j^\pm)$, satisfying $\langle z_j^\pm , w_j^\mp \rangle = \delta_{j,k}$. 

\subsection{Clifford algebra}

As our focus will be on $V^*$, we will construct the Clifford algebra associated with $V^*$ and $B$. Note that this results in the same Clifford algebra as when using $V$ and $B$. 

The Clifford algebra $\cC \colonequals \cC(V^*,B)$ is
the quotient of the tensor algebra $T(V^*)$ by the ideal generated by all elements of the form $v\otimes v  -  B(v,v)1 $ for $v\in V^*$. 
The quotient map from the embedding $V^* \to T(V^*)$ gives a canonical map $\gamma \colon V^* \to \cC$.

We have that $\cC$ is the associative algebra with 1 generated by $\gamma(V^*)$, subject to the anticommutation relations
\begin{equation}\label{e:Clifcom}
	\gamma(u)\gamma(v)+\gamma(v)\gamma(u) = 2 B(u,v)\,1\qquad\text{for }u,v\in V^*\p.
\end{equation}
The Clifford algebra inherits the structure of a filtered super algebra from $T(V^*)$, with the generators $\gamma(V^*)$ being odd and having filtration degree 1. 

For $u\in V^*$, we will denote $\gamma_u \colonequals \gamma(u) \in \cC$, and for $u_1,\dotsc,u_k\in V^*$ we let $\gamma_{u_1\dotsm u_k} \colonequals \gamma_{u_1}\dotsm \gamma_{u_k} \in \cC$. 
Moreover, we denote 	$\gamma_{u_1\dotsm \widehat{u}_j\dotsm u_n}\colonequals	\gamma_{u_1} \dotsm \widehat{\gamma}_{u_j} \dotsm \gamma_{u_n}$ where the notation $ \widehat{\gamma}_{u_j}$ indicates that the factor $ \gamma_{u_j}$ is omitted in the product.

For the chosen bases of $V^*$ we denote $e_j = \gamma(x_j)$ for $j\in\{1,\dotsc,d\}$, and 
$\theta^\pm_k = \gamma(z^\pm_k)$ for $k\in \{1,\dotsc,\ell\} $ with $\theta_0 = \gamma(z_0)$ for $d$ odd.
In $\ca C$, they satisfy the relations
\begin{equation}
 e_je_k+e_ke_j = 2\delta_{j,k} \p, \quad \theta_j^\pm\theta_k^\pm + \theta_k^\pm\theta_j^\pm = 0\p, \quad \theta_j^+\theta_k^- + \theta_k^+\theta_j^- = \delta_{j,k}\p,
\end{equation}
and when $d$ is odd, also $\theta_0^2 = 1$ and $\theta_0\theta_k^\pm + \theta_k^\pm\theta_0 = 0$. 

For a subset  $A \subset \{1,\dotsc,d\}$, with elements $A= \{ a_{1},a_{2},\dotsc,a_{n}\} $ such that $1\leq a_{1}<a_{2}<\cdots <a_{n}\leq d$, we denote
$
e_A = e_{a_{1}}e_{a_
	{2}}\cdots e_{a_{n}}
$.
Let $e_{\emptyset} = 1$, then a basis for $\ca C$ as a vector space is given by $\{e_A \mid A \subset \{1,\dotsc,d\} \}$.

We denote the chirality element of the Clifford algebra as
\begin{equation}\label{e:Gamma}
	\Gamma \colonequals i^{d(d-1)/2} e_1 \dotsm e_d \in \ca C;
\end{equation}
it satisfies $\Gamma^2 = 1$ and $\Gamma \gamma_u = (-1)^{d-1} \gamma_u \Gamma$ for $u\in V^*$.

Let $\ca A$ denote the anti-symmetrization operator, which has the following action on a multilinear expression in $n$ indices
\begin{equation}\label{e:asym}
	\ca A(	f_{u_1u_2\dotsm u_n})= \frac{1}{n!} \sum_{s \in \rm{S}_n} \sgn (s) f_{u_{s(1)} \dotsm u_{s(n)}} \p,
\end{equation}
where $\rm{S}_n$ is the symmetric group of degree $n$.	
We have $\ca A \ca A = \ca A$ and 
\begin{equation}\label{e:asym2}
	\ca A(	f_{u_1u_2\dotsm u_n})= \frac{1}{n} \sum_{j=1}^n (-1)^{j-1} f_{u_j} 	\ca A(	{}_{u_1\dotsm \widehat{u}_j \dotsm u_n}) \p.
\end{equation}	

The symbol map and its inverse, the quantization map
\begin{equation}\label{e:quant}
q \colon \bigwedge( V^*) \to \ca C \colon u_1 \wedge u_2 \wedge \dotsb \wedge u_k \mapsto \cA{u_1\dotsm u_k}\p,
\end{equation}	
are isomorphisms of $\rO$-modules and filtered super vector spaces~\cite[Section 2.2.5]{Meinrenken}.

For $u,v,w,x\in V^*$, we have
\begin{align}
	\ca A (\gamma_{uv}) &  =\gamma_{uv}-B(u,v)\label{e:Auv}\\
	\ca A (\gamma_{uvw}) &=\gamma_{uvw}-B(u,v)\gamma_w  +B(u,w)\gamma_v -B(v,w)\gamma_u \label{e:Auvw}\\ 
	\ca A (\gamma_{uvwx})& =\gamma_{uvwx} -B(u,v)\gamma_w\gamma_x  +B(u,w)\gamma_v\gamma_x -B(v,w)\gamma_u \gamma_x\label{e:Auvwx}\\
	& \quad
	-B(u,x)\gamma_v\gamma_w  +B(v,x)\gamma_u\gamma_w -B(w,x)\gamma_u \gamma_v\notag\\
	& \quad
	+B(u,v)B(w,x)  -B(u,w)B(w,x) +B(v,w)B(w,x) \notag\p.
\end{align}
Note that if $u_1,\dotsc,u_n\in V^*$ are $B$-orthogonal, then 
\[
\ca A(\gamma_{u_1u_2\dotsm u_n})= 	\gamma_{u_1u_2\dotsm u_n}\p.
\]	

With the commutator, the space $q(\wedge^2(V^*))$ in $\ca C$ forms a realization of the Lie algebra $\fr{so}(V^*,B) \cong \fr{so}(V,B) \cong \fr{so}(d,\bC)$. We have the following adjoint action on $\gamma(V^*)$: 
\begin{equation}\label{e:adso}
	[\gamma_{uv}/2,\gamma_w] =		\gamma_u[\gamma_v,\gamma_w]/2 - 	[\gamma_u,\gamma_w]\gamma_v/2 = B(v,w) \gamma_u- 	B(u,w)\gamma_v \p.
\end{equation}

\subsection{Spinor space}

When $d$ is odd, there is a unique isomorphism class of irreducible $\bZ_2$-graded $\ca C$-modules, and there are two isomorphism classes of irreducible ungraded $\ca C$-modules, see \cite[Theorem~3.10]{Meinrenken}. 
When $d$ is even, there are two isomorphism classes of irreducible $\bZ_2$-graded $\ca C$-modules, and there is a unique isomorphism class of irreducible ungraded $\ca C$-modules.

A model for an irreducible $\bZ_2$-graded $\ca C$-module $\bb S$ is given by the exterior algebra $ \bigwedge (V^+ \oplus V^0)$, where $V^*=V^+\oplus V^0 \oplus V^-$ is a Witt decomposition of $V^*$. 
On the space $\bb S$ the elements of $\gamma(V^+)$ act by exterior multiplication and those of $\gamma(V^-)$ by interior multiplication or contraction. 
In terms of a $B$-isotropic basis of $V^*$ as defined in Section~\ref{s:bases}, 
$\theta^-_{j}\in \gamma(V^-)$ acts as the odd differential operator corresponding to the odd variable $\theta^+_{j}\in \gamma(V^+)$. 
 
When $d$ is even, $V^0 = \emptyset$ and this gives the complete action of $\ca C$ on $\bb S$. The parity-reversed space $\Pi ( \bb S ) $ is another irreducible $\bZ_2$-graded $\ca C$-module, they are isomorphic as ungraded $\ca C$-modules. 
The spinor space $\bb S$ can be realized explicitly inside the Clifford algebra as 
\begin{equation}
\bb S =\bigwedge (V^+ ) \prod_{j} (\theta_{j}^- \theta_{j}^+)  \p, 
\end{equation}
with the action of $\ca C$ given by Clifford algebra multiplication, since the product $\prod\theta_{j}^- \theta_{j}^+$ is annihilated by all $\theta^-_{k}$.

When $d$ is odd, recall $\theta_0\in \gamma(V^0)$ with $B(\theta_0,\theta_0)=1$, we can write $\bb S = \bb S_+ \oplus  \bb S_-$ where 
$\bb S_\pm = \bigwedge (V^+) (1\pm\theta_0)/2  $ are non-isomorphic irreducible ungraded $\ca C$-modules. 
The action of $\theta_{0}$ on $S_\pm$ is given by
\begin{equation}\label{e:acttheta0}
\theta_{0} \cdot \theta = \pm (-1)^k \theta \p,\quad \text{ for } \theta \in \textstyle\bigwedge^k (V^+)(1\pm\theta_0)/2\subset \bb S_\pm\p,
\end{equation}
and extending by linearity. The sign in the action~\eqref{e:acttheta0} of $\theta_{0}$ distinguishes between the spaces $\bb S_{\pm}$.
The spinor space $\bb S$ can be realized explicitly inside the Clifford algebra by letting
\begin{equation}
\bb S_{\pm} = \bigwedge (V^+ )(1\pm\theta_0)/2 \prod_{j} (\theta_{j}^- \theta_{j}^+) \p, 
\end{equation}
with the action of $\ca C$ given by Clifford algebra multiplication, since $\theta_0(1\pm\theta_0)/2 = \pm (1\pm\theta_0)/2 $.

%

\subsection{Reflection group}
We fix a finite real reflection group $G\subset \rO$.
Denote by $\mathcal S$ the set of reflections of $G$.
For each $s\in \mathcal{S}$, fix $\alpha_s\in V^*$ to be a $-1$ eigenvector for the action of $s$. 

By definition, $\beta(\alpha_s)\in V$ is a $-1$ eigenvector for the action of $s$ on $V$.
Denote $	\alpha_s^{\vee} 
 \colonequals 2\beta(\alpha_s) / B(\alpha_s,\alpha_s) $, then 
for $v\in V$ and $u\in V^*$, the  reflection $s\in \mathcal{S}$ acts as
\begin{equation}\label{e:s}
	s( v) = v - \langle\alpha_s, v \rangle \alpha_s^{\vee}\p,\qquad 
		s(u) = u - \alpha_s\langle\alpha_s^{\vee}, u \rangle \p.
\end{equation} 

Define $T(V \oplus V^*) \rtimes G$ to be the quotient of $T(V \oplus V^*) \otimes \bC[G]$ by the relations
\begin{equation}\label{e:crossedproduct}
(u,g)(v,h) = (u\,g(v),gh) \qquad \text{for } u,v\in T(V \oplus V^*) \text{ and } g,h\in G\p,
\end{equation} 
so in $T(V \oplus V^*) \rtimes G$, we have $gug^{-1} = g(u)$ for $g\in G$ and $u\in T(V \oplus V^*)$.

We fix a map $\kappa \colon \mathcal S \to \bC $ that is $G$-invariant (for the conjugation action), so that the elements of an orbit all have the same image. 

\begin{defin}
Define $H_\kappa = H_\kappa(G,V)$ to be the quotient of $T(V \oplus V^*) \rtimes G$
by the  relations
\begin{equation}
	\label{e:RC}
	\begin{aligned}{}
		[x,u] &= 0 = [y,v]\p, \qquad\text{for } y,v \in V \text{ and } x,u\in V^*\\
		[y,x] &= \langle y, x\rangle +   \sum_{s\in\mathcal S} \langle y,	\alpha_s\rangle\langle  \alpha_s^{\vee},  x \rangle  \kappa(s) s \p, \quad\text{for } y \in V,x\in V^*\p.
	\end{aligned}
\end{equation} 		
\end{defin}
When $\kappa$ is the zero map, the relations~\eqref{e:RC} reduce to the canonical commutation relations of the Weyl algebra $\ca W = \ca W(V)$, so $ H_0(G,V) = \ca W(V)  \rtimes G $.

\begin{remar}
	The algebra $H_\kappa(G,V)$ is called a rational Cherednik algebra, and is
	a rational degeneration of a double affine Hecke algebra \cite{EtGi,GGOR}. 
		More generally, a complex reflection group $G\subset GL(V)$ can be used.
		Also,  there can be an extra parameter $t$ accompanying  $\langle y, x\rangle$ in \eqref{e:RC}, which we have taken $t=1$ here. See \cite{Kieran} for the case where $t\in \bC^\times$.


	A realization of $H_\kappa$ is given by means of Dunkl operators~\cite{Du}
	\begin{equation}\label{e:dunkl}
		\ca{D}_{y} = \frac{\partial}{\partial y} +  \sum_{s\in\mathcal S}  \kappa(s) \frac{\langle  y,	\alpha_s\rangle}{\alpha_s}    (1-s)\p,\qquad \text{for } y\in V\p,
	\end{equation} 
	and coordinate variables (for the elements of $V^*$), which gives a natural, faithful action on  the polynomial space $S(V^*)$. 
	
	In the context of a rational Cherednik algebra, the parameter function is usually chosen the opposite sign compared to the one used for Dunkl operators, corresponding to the substitution $\kappa = -c$.
\end{remar}

\begin{lemma}\label{l:Buv}
For $u,v\in V^*$ or $u,v\in V$, in  $H_\kappa$  we have $[\beta(u),v]=  [\beta(v), u]$. 
\end{lemma}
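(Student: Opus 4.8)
The plan is to prove the identity by reducing to the defining commutation relations~\eqref{e:RC}, treating the two cases $u,v\in V^*$ and $u,v\in V$ separately. In both cases the point is that $\beta$ interchanges $V$ and $V^*$, so that one of the two arguments of each commutator lands in $V$ and the other in $V^*$, which is exactly the situation governed by the nontrivial relation in~\eqref{e:RC} (elements within $V^*$, or within $V$, commute). The statement $[\beta(u),v]=[\beta(v),u]$ is then an assertion of symmetry under the interchange $u\leftrightarrow v$, and I expect both the scalar ``pairing'' term and each group-element term in~\eqref{e:RC} to become manifestly symmetric once rewritten through the form $B$. The preliminary ingredient is the dictionary between the natural pairing $\langle\cdot,\cdot\rangle$, the involution $\beta$, and $B$: from~\eqref{e:beta} and the fact that $\beta$ is an involution of $V\oplus V^*$ with $B(u,v)=B(\beta(u),\beta(v))$, one gets $\langle\beta(u),w\rangle=B(u,w)$ (symmetric in its two slots, modulo applying $\beta$), together with the definition $\alpha_s^{\vee}=2\beta(\alpha_s)/B(\alpha_s,\alpha_s)$ from around~\eqref{e:s}.

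For the case $u,v\in V^*$, so that $\beta(u),\beta(v)\in V$, I would apply~\eqref{e:RC} directly to $[\beta(u),v]$. The scalar term is $\langle\beta(u),v\rangle=B(u,v)$, which is symmetric in $u,v$ by symmetry of $B$. For the reflection contributions, using $\langle\beta(u),\alpha_s\rangle=B(u,\alpha_s)$ and $\langle\alpha_s^{\vee},v\rangle=2B(\alpha_s,v)/B(\alpha_s,\alpha_s)$, the coefficient of $\kappa(s)\,s$ becomes
\[
\langle\beta(u),\alpha_s\rangle\langle\alpha_s^{\vee},v\rangle=\frac{2\,B(u,\alpha_s)\,B(v,\alpha_s)}{B(\alpha_s,\alpha_s)},
\]
which is symmetric in $u$ and $v$ term by term. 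Summing over $s\in\mathcal S$ then gives $[\beta(u),v]=[\beta(v),u]$.

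For the case $u,v\in V$, now $\beta(u),\beta(v)\in V^*$, and I would instead expand $[v,\beta(u)]$ via~\eqref{e:RC} and use $[\beta(u),v]=-[v,\beta(u)]$. The same dictionary gives scalar term $\langle v,\beta(u)\rangle=B(u,v)$ and reflection coefficient $\langle v,\alpha_s\rangle\langle\alpha_s^{\vee},\beta(u)\rangle=2\langle\alpha_s,u\rangle\langle\alpha_s,v\rangle/B(\alpha_s,\alpha_s)$, again symmetric in $u,v$. Hence $[v,\beta(u)]=[u,\beta(v)]$, and the overall sign is common to both sides, so $[\beta(u),v]=-[v,\beta(u)]=-[u,\beta(v)]=[\beta(v),u]$.

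The \textbf{main obstacle} is purely bookkeeping: keeping straight which argument of $\langle\cdot,\cdot\rangle$ lies in $V$ and which in $V^*$, and correctly transporting everything through $\beta$ so as to land on the symmetric form $B$ with the right factor $2/B(\alpha_s,\alpha_s)$ coming from $\alpha_s^{\vee}$. Once these translation identities are established, the symmetry in $u\leftrightarrow v$ of every term is immediate, and no further computation (in particular no use of the nontrivial $G$-action or of any closed-form Dunkl expression) is needed.
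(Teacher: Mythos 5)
Your proposal is correct and follows essentially the same route as the paper: expand $[\beta(u),v]$ via the defining relations~\eqref{e:RC}, translate the pairing and the coefficients $\langle\beta(u),\alpha_s\rangle\langle\alpha_s^{\vee},v\rangle$ into the form $B$, and conclude from the symmetry of $B$, exactly as in~\eqref{e:combuv}. The only difference is that you spell out the case $u,v\in V$ separately (with the sign bookkeeping via $[\beta(u),v]=-[v,\beta(u)]$), which the paper leaves implicit; that is a harmless, and arguably welcome, addition.
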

\begin{proof}
	Let $u,v\in V^*$, then we have
	\begin{equation}\label{e:combuv}
		[\beta(u),v]= B(u,v) + 2\sum_{s\in\mathcal S} \frac{B( \alpha_s,u)B( v,\alpha_s ) }{B(\alpha_s,\alpha_s)}  \kappa(s) s  = [\beta(v), u]\p,
	\end{equation}  
where the last equality follows from $B$ being symmetric.
\end{proof}

\subsection{Superalgebra}

We consider the superspace $\ca V = \bC^{2|1}$ equipped with a non-degenerate, skew-supersymmetric, consistent, bilinear form $b$.
Denote by $\omega$ the skew-symmetric bilinear form on $\ca V$, and also its restriction to $\ca V_{\bar 0}=\bC^{2}$, that equals $b$ on  $\ca V_{\bar 0} $ and is zero on  $(\ca V  \times \ca V ) \setminus (\ca V_{\bar 0}   \times \ca V_{\bar 0}  )$.
 
The tensor product $\ca U = V^*\otimes \ca V$ is again a superspace, inheriting the $\bZ_2$-grading from $\ca V$, so $\ca U_{\bar 0} = V^*\otimes \ca V_{\bar 0}$ and $\ca U_{\bar 1} = V^*\otimes \ca V_{\bar 1}$. 
There is a natural action of $\rO$ on $\ca U = V^*\otimes \ca V$ as 
$\rO\otimes \Id_{\ca V}$ where $\Id_{\ca V}$ denotes the identity on $\ca V$. For $G \subset\rO$  
we consider also another action on $\ca U$: 
\begin{equation}\label{e:actO}
	a_0 \colon G \times \ca U \to \ca U \colon 
	\begin{cases}
		a_0(g,u\otimes v) = (g\cdot u)\otimes v & \text{ for }u\in V^*, v \in \ca V_{\bar 0} \\
			a_0(g,u\otimes v) =u\otimes v  & \text{ for }u\in V^*, v \in \ca V_{\bar 1}\p,
	\end{cases}
\end{equation}
where $g\cdot u$ denotes the action of $g\in G$ on $u\in V^*$.
The ``missing'' interaction of $G$ and $\ca U_{\bar 1}$ will be provided by means of the $\Pin$-group inside the Clifford algebra, see~\eqref{e:rho}.

The action \eqref{e:actO} is extended naturally to the tensor superalgebra $T(\ca U) = \bigoplus_n \ca U^{\otimes n}$, which uses $\bZ_2$-graded tensor products. Using~\eqref{e:actO}, define $T(\ca U) \rtimes G$ to be the quotient of $T(\ca U) \otimes \bC[G]$ by the relations
\begin{equation}\label{e:crossproduct2}
(u,g)(v,h) = (u\,a_0(g,v),gh) \qquad \text{for } u,v\in T(\ca U), g,h\in G\p.
\end{equation}

Now, we consider the $G$-invariant symmetric bilinear map 
$\psi_\kappa^B(\cdot,\cdot) \colon V^*\times  V^* \to \mathbb{C}[G] $ 
defined as
\begin{equation}\label{e:psiB}
	\psi_\kappa^B(u,v) =  
	2\sum_{s\in\mathcal S} 
	\frac{B( \alpha_s,u)B( v,\alpha_s ) 
	}{B(\alpha_s,\alpha_s)}  \kappa(s) s \p, \qquad\text{for }u,v\in V^*\p.
\end{equation} 	

\begin{defin}
Define the superalgebra $A_\kappa$  to be the quotient of 
$(T(\ca U) \rtimes G) $ by the relations
\begin{equation}\label{e:comre}
	u v - (-1)^{|u||v|} vu = b_{\ca{U}}(u,v)\,1+ \psi_\kappa(u,v)\,1 \qquad\text{for }u,v\in \ca U_{\bar0} \cup \ca{U}_{\bar1}\p,
\end{equation}
where the right-hand side is defined for $u \otimes w, v\otimes z \in V^*\otimes \ca V = \ca U $ as 
\[
b_{\ca{U}}(u \otimes w,v\otimes z) = B(u,v)b(w,z) \p,
\qquad
\psi_\kappa(u \otimes w,v\otimes z) =\psi_\kappa^B(u,v) \omega(w, z)\p.
\]
\end{defin}
The superalgebra $A_\kappa$ is generated, as an algebra, by $\ca U$ and $G$. 
We now show that $A_\kappa$ is the tensor product of the rational Cherednik algebra $H_\kappa$ and the Clifford algebra $\ca C$. 

Fix $x^+,x^-\in \ca V _{\bar 0}$ and $\gamma\in \ca V _{\bar 1}$ to be a basis of $\ca V = \bC^{2|1}$ satisfying $b(x^-,x^+) = 1 = -b(x^+,x^-)$ and $b(\gamma,\gamma)=2$, with $b$ zero for all other combinations. 

For the elements of 
the form $ u \otimes \gamma \in  \ca U$,
we see that the relations~\eqref{e:comre} correspond precisely to the Clifford algebra relations~\eqref{e:Clifcom}. For $u\in V^*$, we will identify $\gamma_u= u \otimes \gamma$. 

Identifying $u\in V^*$ with $u \otimes x^+ \in \ca U$, and $u \otimes x^- \in \ca U$ with $\beta(u) \in V$, 
the relations~\eqref{e:comre} then correspond precisely to~\eqref{e:RC}.
An element $v \in V$ then corresponds to $\beta(v)\otimes x^- \subset \ca U$.


\subsection{Double cover} 

The group $\Pin \colonequals \Pin(V,B)$ is a double cover  $p\colon\Pin\to\rO$  and is realized in the Clifford algebra $\ca C$ 
as the set of products $\gamma_{u_1\dotsm u_k}$ where $u_j\in V^*$ with $B(u_j,u_j) =1$~\cite{Meinrenken}. The subgroup $\Spin(V,B)$ consists of similar products with $k$ even.

For a reflection $s\in \ca S \subset \rO$, denote $\tilde s \colonequals \gamma(\alpha_s)/\sqrt{B( \alpha_s,\alpha_s)} \in \Pin\subset \ca C$, then $p(\widetilde s) = s$ and $p^{-1}(s) =\{ \pm\tilde s\}\subset \ca C$.
The preimage of the identity $\Id \in \rO$ is  $p^{-1}(\Id) =\{ \pm1\}\subset \ca C$. 

Define the pin double cover of $G\subset \rO$ as $\widetilde G \colonequals p^{-1}(G) \subset \Pin$. The conditions for $\widetilde G$ to be a non-trivial central extension of $g$ are in~\cite{Morris}. 
See~\cite{Morris} also for a presentation in terms of generators and relations for $G$ and $\widetilde G$.

\begin{remar}
	Note that this is the version of the $\Pin$-group, and thus of $\widetilde G$, where the preimages (for the covering map $p$) of a reflection in $\rO$ have order two (and not four).	
	The (non-isomorphic) other version can be obtained by using elements $u_j\in V^*$ with $B(u_j,u_j) =-1$, or by adding a minus sign to the defining relations of the Clifford algebra.
	We refer to~\cite[Section~3.7.2]{Meinrenken} for the definition and the distinction with the group $\Pin_c$. 
\end{remar}

In the superalgebra $A_\kappa \cong H_\kappa \otimes \ca C$, there is a copy of the group $G\subset H_\kappa$ and also of the group $	\widetilde{G} \subset \ca C$. We use these to define a group homomorphism 
\begin{equation}\label{e:rho}
	\rho \colon \tilde G \to 	A  \colon \tilde s \mapsto p(\tilde s) \, \tilde s\p,
\end{equation}
which is extended linearly to a map on the group algebra $\mathbb{C}[\widetilde G]$. 
We note that $\rho(\mathbb{C}[\widetilde G])$ is a quotient of the group algebra $\mathbb{C}[\widetilde G]$, since the central element (the non-trivial preimage of the identity) is given by the scalar $-1\in\bC$
for the realization of $\widetilde{G}$ in  $\Pin \subset\ca C$, see~\cite{Kieran}.

Recall that, as an algebra, the superalgebra $A_\kappa$ is generated by $\ca U = V^*\otimes \ca V$ and $G$. 
\begin{defin}\label{d:actG}
	For $g\in G$ and $u\in A_\kappa$, we denote by $g\cdot u$ the action $G \times A_\kappa \to A_\kappa$ that is the extension of the natural action of $G\subset \rO$ on $V^*$, acting as $G \otimes \Id_{\ca V}$ on $V^* \otimes \ca V$, and of the action by conjugation on the copy $G\subset H_\kappa$.
\end{defin}

This action of $G$ is related to the action of $\rho(\widetilde G)$ inside $A_\kappa$ as follows. 

\begin{lemma}\label{l:rhoG}
For $\tilde g \in \widetilde G$ and $u\in A_\kappa$ a homogeneous element for the $\bZ_2$-grading, in $A_\kappa$ we have
\begin{equation}
	\rho(\tilde g) u \rho(\tilde g^{-1}) =  (-1)^{|\tilde g||u|}p(\tilde g) \cdot u\p.
\end{equation}	
\end{lemma}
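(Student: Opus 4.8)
The plan is to recognize both sides as algebra endomorphisms of $A_\kappa$ and to reduce the verification to the generators $\ca U = V^*\otimes\ca V$ and $G$. The left-hand side $u \mapsto \rho(\tilde g)\,u\,\rho(\tilde g^{-1})$ is conjugation by the invertible element $\rho(\tilde g)$ — recall $\rho$ is a group homomorphism, so $\rho(\tilde g^{-1}) = \rho(\tilde g)^{-1}$ — hence an algebra automorphism. The right-hand side $\Phi_{\tilde g}\colon u \mapsto (-1)^{|\tilde g||u|}\,p(\tilde g)\cdot u$ is also multiplicative: the action $p(\tilde g)\cdot$ is an algebra automorphism of $A_\kappa$ (the relations~\eqref{e:comre} are $G$-invariant, since $B$, $\omega$ and $\psi_\kappa^B$ are), and because $|uv| = |u|+|v|$ the Koszul signs combine as $(-1)^{|\tilde g||u|}(-1)^{|\tilde g||v|} = (-1)^{|\tilde g||uv|}$, giving $\Phi_{\tilde g}(u)\Phi_{\tilde g}(v) = \Phi_{\tilde g}(uv)$. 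Since $A_\kappa$ is generated as an algebra by $\ca U$ and $G$, two algebra homomorphisms agreeing on $\ca U\cup G$ coincide, so it suffices to check the identity for homogeneous $u$ in these generating families.

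First I would exploit the factorization $A_\kappa \cong H_\kappa\otimes\ca C$, in which $H_\kappa$ is concentrated in even degree (it is generated by $V$, $V^*$ and $G$, all even) and therefore supercommutes trivially with $\ca C$. Writing $\rho(\tilde g) = p(\tilde g)\,\tilde g$ with $p(\tilde g)\in H_\kappa$ even and $\tilde g \in \ca C$, conjugation by $\rho(\tilde g)$ then splits: on the $H_\kappa$-factor it is conjugation by $g\colonequals p(\tilde g)$ (the even $\tilde g$ cancels), and on the $\ca C$-factor it is conjugation by $\tilde g$ (the even $p(\tilde g)$ commutes through).

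For the even generators — the elements of $G$ and the images of $\ca U_{\bar 0}=V^*\otimes\ca V_{\bar 0}$ inside $H_\kappa$ — the sign $(-1)^{|\tilde g||u|}$ is trivial and conjugation reduces to conjugation by $g$ in $H_\kappa$. This is exactly the natural action $g\cdot u$ of \cref{d:actG}: for $G\subset H_\kappa$ it is conjugation by definition, while for $u\otimes x^{\pm}$ the crossed-product relation $gug^{-1}=g(u)$ together with the $\rO$-equivariance of $\beta$ identifies conjugation with the $G\otimes\Id_{\ca V}$ action. The remaining, essential, case is an odd generator $\gamma_v = v\otimes\gamma\in\ca C$, where the claim becomes the Clifford-algebra identity
\[
\tilde g\,\gamma_v\,\tilde g^{-1} = (-1)^{|\tilde g|}\,\gamma_{p(\tilde g)\cdot v}\p.
\]

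I expect this last computation to be the main point. It is the defining twisted-adjoint property of the $\Pin$ covering map: for a single unit vector $w$ one gets, directly from~\eqref{e:Clifcom}, $\gamma_w\gamma_v\gamma_w^{-1} = \gamma_{2B(w,v)w - v} = -\gamma_{s_w(v)}$, with $s_w = p(\gamma_w)$ the corresponding reflection. Writing a general $\tilde g\in\widetilde G$ as a product of $k$ such factors and inducting produces the sign $(-1)^k = (-1)^{|\tilde g|}$ together with the composite reflection $p(\tilde g)$. As $\gamma_v$ is odd, $(-1)^{|\tilde g|} = (-1)^{|\tilde g||\gamma_v|}$, matching $\Phi_{\tilde g}(\gamma_v)$. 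The only real care needed throughout is the sign bookkeeping: the Koszul rule in the graded tensor product (harmless here because $H_\kappa$ is purely even) and the parity factor generated by conjugating odd Clifford elements.
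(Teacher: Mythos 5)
Your proof is correct and is essentially the paper's argument written out in full: the paper's one-line proof simply cites the crossed-product relation \eqref{e:actO}, the definition \eqref{e:rho} of $\rho$, and the twisted-adjoint property of the $\Pin$-group, which are exactly the three ingredients you verify (triviality of the $G$-action on the odd part so that $H_\kappa$ and $\ca C$ commute, conjugation by $p(\tilde g)$ on $H_\kappa$, and $\gamma_w\gamma_v\gamma_w^{-1}=-\gamma_{s_w(v)}$ with induction over the factors of $\tilde g$). Your reduction to generators via multiplicativity of both sides, including the Koszul-sign bookkeeping $(-1)^{|\tilde g||u|}(-1)^{|\tilde g||v|}=(-1)^{|\tilde g||uv|}$, is sound and fills in what the paper leaves implicit.
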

\begin{proof}
	Use~\eqref{e:actO}, \eqref{e:rho} and the properties of the $\Pin$-group. 
\end{proof}

Finally, for $u\in V^*$, we define the following elements in $ \rho(\widetilde G)$:
\begin{equation}\label{e:Ov}
	\oO_u  \colonequals \frac12\sum_{s\in\mathcal S}  \alpha_s^{\vee}(u ) \,  \kappa(s) \,s \,	\gamma_{\alpha_s}
	=  \sum_{s\in\mathcal S}  \frac{B(\alpha_s,u)}{\sqrt{ B(\alpha_s,\alpha_s)}} \,  \kappa(s) \rho(\tilde s ) \p.
\end{equation}
By~\eqref{e:comre} and~\eqref{e:combuv}, we have that for $u,v\in V^*$
	\begin{equation}\label{e:Ogamma}
	\scom[+]{ \gamma_u}{\oO_v} = \scom[-]{\beta(u)}{v} - B(u,v) =	\scom[+]{ \gamma_v}{\oO_u}  \p,
\end{equation}
where the last equality follows by Lemma~\ref{l:Buv}.
Expanding the anticommutators in~\eqref{e:Ogamma} gives rise to the following result (cfr.~\cite[Lemma 3.10]{DOV}). 
\begin{lemma}\label{l:Ogammas}
	Let $n \in \{1,2,\dotsc,d\}$, and 
	$u_1,\dotsc,u_n\in V^*$, then
	\begin{equation}\label{e:Ogammas}
		\ca A (\oO_{u_1}	\gamma_{u_2 \dotsm u_n})
		= \ca A (\gamma_{u_1} \oO_{u_2}	\gamma_{u_3 \dotsm u_n})
		= \dotsb 
		= 	\ca A (\gamma_{u_1 \dotsm u_{n-1}}\oO_{u_n})\p.
	\end{equation}
\end{lemma}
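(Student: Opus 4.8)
The plan is to prove the chain of equalities one link at a time: it suffices to show that for each $j\in\{1,\dotsc,n-1\}$,
\[
\ca A(\gamma_{u_1\dotsm u_{j-1}}\oO_{u_j}\gamma_{u_{j+1}\dotsm u_n}) = \ca A(\gamma_{u_1\dotsm u_j}\oO_{u_{j+1}}\gamma_{u_{j+2}\dotsm u_n}),
\]
since composing these $n-1$ identities yields the full statement. By linearity of $\ca A$ (and the fact that the two products agree in every slot except $j$ and $j+1$), the difference of the two sides equals $\ca A$ applied to the expression obtained from $\gamma_{u_1}\dotsm\gamma_{u_n}$ by replacing the two adjacent factors in slots $j,j+1$ with the \emph{defect}
\[
D_{u_j u_{j+1}} \colonequals \oO_{u_j}\gamma_{u_{j+1}} - \gamma_{u_j}\oO_{u_{j+1}}.
\]
So the whole lemma reduces to showing that $\ca A$ annihilates $g_{u_1\dotsm u_n}\colonequals\gamma_{u_1\dotsm u_{j-1}}\, D_{u_j u_{j+1}}\, \gamma_{u_{j+2}\dotsm u_n}$.

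The key step is to observe that the defect $D_{uv}$ is \emph{symmetric} under interchanging $u$ and $v$. Both $\gamma_u$ and $\oO_v$ are odd (recall $\oO_v$ is a linear combination of the odd elements $\rho(\tilde s)$), so the bracket in \eqref{e:Ogamma} is the anticommutator $\gamma_u\oO_v + \oO_v\gamma_u$, and \eqref{e:Ogamma} states precisely that this is symmetric in $u,v$; write $c_{uv}=c_{vu}\in\bC[G]$ for its common value. Then, using $\oO_u\gamma_v = c_{vu}-\gamma_v\oO_u = c_{uv}-\gamma_v\oO_u$ and $\gamma_u\oO_v = c_{uv}-\oO_v\gamma_u$, one finds
\[
\oO_u\gamma_v - \gamma_u\oO_v = \oO_v\gamma_u - \gamma_v\oO_u = D_{vu},
\]
so that $D_{u_j u_{j+1}} = D_{u_{j+1}u_j}$.

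Consequently the expression $g_{u_1\dotsm u_n}$ is invariant under the transposition $\tau=(j\;\,j+1)$ of its index labels: the factors in all other slots are individually labelled $\gamma$'s that simply retain their indices, while the middle factor $D_{u_j u_{j+1}}$ is unchanged by the symmetry just established. Applying the standard antisymmetrization identity $\ca A(g\circ\tau)=\sgn(\tau)\,\ca A(g)$, which follows directly from the definition \eqref{e:asym} by reindexing the sum, invariance of $g$ under $\tau$ forces $\ca A(g)=-\ca A(g)$, hence $\ca A(g)=0$. This establishes equality of consecutive terms and therefore the lemma.

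The one point that needs care—and the crux of the argument—is the symmetry $D_{uv}=D_{vu}$: it rests on the fact, coming from \eqref{e:Ogamma} (ultimately from \cref{l:Buv} and the symmetry of $B$), that the anticommutator $\scom[+]{\gamma_u}{\oO_v}$ is symmetric in its two arguments, together with $\oO$ being odd so that this bracket really is an anticommutator. Note that although $c_{uv}$ lies in $\bC[G]$ and need not be central, the computation never moves it past other factors, so no commutation of group elements in $A_\kappa$ is required; this is what makes the otherwise noncommutative setting harmless here.
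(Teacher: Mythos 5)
Your proof is correct, and it rests on exactly the same key identity as the paper's: the symmetry $D_{uv}=D_{vu}$ you establish is just a rearrangement of~\eqref{e:gammaO}, which the paper likewise derives from~\eqref{e:Ogamma} as its $n=2$ base case. The difference lies in the combinatorial packaging. The paper expands the antisymmetrizer twice via~\eqref{e:asym2}, substitutes~\eqref{e:gammaO} pairwise into the combinations $\oO_{u_j}\gamma_{u_k}-\oO_{u_k}\gamma_{u_j}$, and reassembles the result as $\ca A(\gamma_{u_1}\oO_{u_2}\gamma_{u_3\dotsm u_n})$, asserting that the remaining equalities follow ``by repeated application of the same steps.'' You instead telescope over adjacent slots and dispose of each difference by the standard principle that $\ca A$ annihilates any multilinear expression symmetric in two of its index labels. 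Your version buys a uniform treatment of every link in the chain, making the paper's ``repeated application'' step explicit, and it avoids the bookkeeping of the double expansion and reassembly; the paper's computation, in exchange, exhibits the intermediate expressions concretely, which it reuses in the proofs of later lemmas. Your closing caution is also well placed: the common value $c_{uv}\in\bC[G]$ of the anticommutators is indeed not central in $A_\kappa$, but as you note it cancels in the derivation of $D_{uv}=D_{vu}$ before any commutation past other factors would be required, so the noncommutativity is harmless.
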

\begin{proof}
	The first non-trivial case, for $n=2$, follows immediately from~\eqref{e:Ogamma}: 
	\begin{equation}\label{e:gammaO}
		\oO_u \gamma_v - \oO_v \gamma_u = \gamma_u \oO_v - \gamma_v \oO_u
		\p.
	\end{equation}
	We can then use this to find for general $n\in \{3,\dotsc,d\}$
	\begin{align*}
		\ca A (	\oO_{u_1}	\gamma_{u_2 \dotsm u_n})
		&= 
		\frac{1}{n}\sum_{j=1}^n(-1)^{j-1} \oO_{u_j}  \ca A (\gamma_{u_1 \dotsm\widehat{u}_j \dotsm u_n}) \\
		& = 
		\frac{1}{n(n-1)}\sum_{1\leq j < k \leq n}(-1)^{j+k-1} (\oO_{u_j}\gamma_{u_k}-\oO_{u_k}\gamma_{u_j}) \ca A ( \gamma_{u_1 \dotsm \widehat{u}_j \dotsm \widehat{u}_k\dotsm u_n}) \\
		& = 
		\frac{1}{n(n-1)}\sum_{1\leq j < k \leq n}(-1)^{j+k-1} (\gamma_{u_j}\oO_{u_k}-\gamma_{u_k}\oO_{u_j})  \ca A (\gamma_{u_1 \dotsm \widehat{u}_j \dotsm \widehat{u}_k\dotsm u_n}) \\
		& =  \ca A (\gamma_{u_1} \oO_{u_2}	\gamma_{u_3 \dotsm u_n})\p,
	\end{align*}
	and the other equalities follow by repeated application of the same steps. 
\end{proof}

\subsection{Lie (super)algebra realizations}

The bilinear form $B$ on $V$ naturally corresponds to an element of $(V\otimes V)^*$, a linear map on $V\otimes V$. Since $V$ is finite-dimensional and $B$ is symmetric, we have  $B \in S^2(V^*) \subset V^* \otimes V^*$. As above, let $v_1^*,\dotsc,v_d^*$ denote a basis of $V^*$, dual to a basis $v_1,\dotsc,v_d$ of $V$, then 
\begin{equation}
	B = \sum_{p,q = 1}^d B(v_p,v_q) v_p^* \otimes v_q^* = \sum_{p = 1}^d v_p^* \otimes \beta(v_p) = \sum_{p = 1}^d \beta(v_p) \otimes v_p^* \p,
\end{equation}
which is, by definition, invariant for the action of the group $\rO(V,B)$.

Every element of $\ca{V}$ corresponds to a copy of $V^*$ in $V^*\otimes \ca V$. 
We can use this to map $B\in S^2(V^*)$ in $S^2( V^*\otimes \ca V)$, by viewing $\nu \in \ca V$ as the map $\nu \colon V^* \to V^* \otimes \ca V\colon v \mapsto v \otimes \nu$. 
For $w,z\in\ca{V}$ homogeneous elements for the $\bZ_2$-grading, 
the supersymmetric tensor product is given by 
\begin{equation}
	\label{e:supersymtensor}
	w \odot z = (w \otimes z   + (-1)^{|w||z|} z \otimes w)/2\p,
\end{equation}
and we then consider the following elements of $S^2( V^*\otimes \ca V)$:
\begin{equation}\label{e:Bwz}
	(w \odot z)(B) = \frac12\sum_{p,q=1}^d  B(v_p,v_q)((v_p^* \otimes w)  (v_q^* \otimes z) + (-1)^{|w||z|}  (v_p^* \otimes z )( v_q^* \otimes w ) )	\p.
\end{equation}
Under the quotient by the relations~\eqref{e:comre}, in the superalgebra $A_\kappa$ we have: 
\begin{equation}\begin{aligned}\label{e:Bwz2}
	(w \odot z)(B)  & = 
	\sum_{p,q=1}^d  B(v_p,v_q)((v_p^* \otimes w)(  v_q^* \otimes z)  -  [v_p^* \otimes w , v_q^* \otimes z]/2 )\\
		& =   \sum_{p,q=1}^d  v_p^* \otimes w \,B(v_p,v_q) v_q^* \otimes z -b( w ,  z)\,d/2-\omega( w ,  z) \Omega_\kappa \p,
\end{aligned}\end{equation}
where we used~\eqref{e:BB} and denote
\begin{equation}\label{e:Omega}
	\Omega_\kappa = \sum_{s\in\ca S} \kappa(s) s\p,
\end{equation}
which is a central element in the group algebra $\mathbb{C}[G] $.

In the tensor product of a Weyl and Clifford algebra $\ca W \otimes \ca C$, the space of invariants for the action (as in Definition~\ref{d:actG}) of $\rO$ is generated by the elements of the form $(w \odot z)(B) $~\cite[Proposition~5.11]{CW}. 
\begin{lemma}\label{l:Bg}
	For $w,z\in\ca{V}$, in $A_\kappa$ we have $[(w \odot z)(B) ,\rho(\widetilde G)]=0$.	
\end{lemma}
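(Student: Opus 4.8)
The plan is to reduce the claim to the $G$-invariance of the element $(w\odot z)(B)$ and then convert that invariance into supercommutation via Lemma~\ref{l:rhoG}, whose sign will exactly compensate the sign appearing in the supercommutator. By bilinearity of $(w\odot z)(B)$ in $w$ and $z$, I may assume $w,z\in\mathcal{V}$ are homogeneous, so that $X\colonequals (w\odot z)(B)$ is homogeneous of parity $|w|+|z|$; note that the scalar corrections $b(w,z)\,d/2$ and $\omega(w,z)\,\Omega_\kappa$ in~\eqref{e:Bwz2} occur only when $|w|+|z|=\bar0$ (as $b$ is consistent and $\omega$ is supported on $\mathcal{V}_{\bar0}\times\mathcal{V}_{\bar0}$), consistent with this parity.

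First I would establish that $X$ is invariant under the action of $G$ from Definition~\ref{d:actG}. The cleanest route is to argue before passing to the quotient: the form $B\in S^2(V^*)$ is $\rO$-invariant, and the construction $\nu\mapsto(v\mapsto v\otimes\nu)$ together with the supersymmetric product produces $(w\odot z)(B)\in S^2(V^*\otimes\mathcal{V})$ by a map equivariant for $G\otimes\Id_{\mathcal{V}}$, the action fixing the $\mathcal{V}$-factor. Since $G$ acts trivially on $\mathcal{V}$ and preserves $B$, the element $(w\odot z)(B)$ is $G$-invariant, and this invariance descends to $A_\kappa$ because the defining relations~\eqref{e:comre}, and hence the quotient map, are $G$-equivariant. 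Alternatively, one can verify invariance directly from~\eqref{e:Bwz2}: the leading double sum is fixed because $\sum_{p,q}B(v_p,v_q)\,(g\cdot v_p^*)\otimes(g\cdot v_q^*)=B$, while the scalar term and $\Omega_\kappa$ are fixed since $\Omega_\kappa$ is central in $\bC[G]$ by~\eqref{e:Omega}. Either way, $g\cdot X = X$ for all $g\in G$.

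With invariance in hand, I would feed $X$ into Lemma~\ref{l:rhoG}. For $\tilde g\in\widetilde G$, the element $\rho(\tilde g)$ has parity $|\tilde g|$ by~\eqref{e:rho} (its $H_\kappa$-part $p(\tilde g)\in G$ is even and its $\mathcal{C}$-part lies in the odd-generated part of the Clifford algebra), and $p(\tilde g)\cdot X = X$. Hence the lemma gives $\rho(\tilde g)\,X\,\rho(\tilde g^{-1}) = (-1)^{|\tilde g||X|}X$, that is $\rho(\tilde g)X = (-1)^{|\tilde g||X|}X\rho(\tilde g)$. Substituting into the supercommutator,
\[
[X,\rho(\tilde g)] = X\rho(\tilde g) - (-1)^{|X||\tilde g|}\rho(\tilde g)X = X\rho(\tilde g) - (-1)^{|X||\tilde g|}(-1)^{|\tilde g||X|}X\rho(\tilde g) = 0,
\]
since $(-1)^{2|X||\tilde g|}=1$. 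Extending linearly over $\bC[\widetilde G]$ and then invoking bilinearity in $w,z$ yields $[(w\odot z)(B),\rho(\widetilde G)]=0$ in general.

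The computation is essentially a bookkeeping of parities, so I expect no serious obstacle; the one point requiring care is matching the sign $(-1)^{|\tilde g||X|}$ produced by Lemma~\ref{l:rhoG} against the sign in the definition of the supercommutator, and confirming that $X$ is genuinely homogeneous so that the scalar corrections sit in the correct parity. The only structural fact I would want to pin down explicitly is that the $G$-action of Definition~\ref{d:actG} is an algebra automorphism compatible with the quotient, so that invariance at the tensor level descends to $A_\kappa$.
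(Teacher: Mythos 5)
Your proposal is correct and follows essentially the same route as the paper, whose one-line proof cites exactly the two ingredients you use: Lemma~\ref{l:rhoG} and the $G$-invariance of $B$. Your write-up simply makes explicit the details the paper leaves implicit --- the homogeneity of $(w\odot z)(B)$, the descent of invariance through the quotient, and the cancellation of the sign $(-1)^{|\tilde g||X|}$ from Lemma~\ref{l:rhoG} against the sign in the supercommutator --- all of which check out.
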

\begin{proof}
	This follows from Lemma~\ref{l:rhoG} and that $G\subset \rO$ preserves $B$. 
\end{proof}

Next, we consider the adjoint action of elements of the form $(w \odot z)(B) $ on the space $\ca U =V^* \otimes \ca V$ in $A_\kappa$.
Recall that $\gamma \in \ca V_{\bar 1}$ satisfies $b(\gamma,\gamma) =2$. 

\begin{lemma}	\label{l:lemma3}
Let $u\in V^*$, while $\xi_1,\xi_2\in \ca V_{\bar0}$ and	$\eta\in \ca V$. 
In $A_\kappa$,  we have
\begin{align}\label{e:3}	
	[(\xi_1 \odot \xi_2)(B),u\otimes \eta] 
	& = b(\xi_2, \eta ) u \otimes \xi_1    +b(\xi_1, \eta ) u\otimes \xi_2	\p,
	\\ 	
	\label{e:1}
	[({\xi_1 \odot \gamma})(B),u\otimes \eta]  & =  b(\gamma, \eta )u\otimes \xi_1 + b(\xi_1, \eta ) (u\otimes \gamma  +2	\oO_u)
	\p,
\end{align}	
where $	\oO_u$ is given by~\eqref{e:Ov}.
\end{lemma}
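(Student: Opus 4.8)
The plan is to expand both elements through their explicit form \eqref{e:Bwz2} and to differentiate the adjoint action using that the supercommutator is a (super)derivation, i.e.\ $[XY,u\otimes\eta]=X[Y,u\otimes\eta]+(-1)^{|Y||\eta|}[X,u\otimes\eta]Y$. Writing $(w\odot z)(B)=\sum_{p,q}(v_p^*\otimes w)\,B(v_p,v_q)\,(v_q^*\otimes z)-\tfrac{d}{2}b(w,z)-\omega(w,z)\Omega_\kappa$, the constant $-\tfrac{d}{2}b(w,z)\,1$ is central and drops out of the commutator, so I am left with the double sum and the term $-\omega(w,z)\Omega_\kappa$. Every inner bracket that appears is of the shape $[v_p^*\otimes\zeta,u\otimes\eta]$ with $\zeta\in\{\xi_1,\xi_2,\gamma\}$, and by the defining relation \eqref{e:comre} it equals the scalar $B(v_p^*,u)\,b(\zeta,\eta)$ plus the group-algebra term $\psi_\kappa^B(v_p^*,u)\,\omega(\zeta,\eta)$. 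I would treat these two kinds of contribution separately.

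For the scalar contributions I would carry out the double sum over $p,q$ weighted by $B(v_p,v_q)$ and collapse it with the duality relations \eqref{e:B}--\eqref{e:BBB}: the identity $\sum_{p,q}B(v_p,v_q)B(v_q^*,u)\,(v_p^*\otimes\zeta)=u\otimes\zeta$, together with its mirror with the roles of $p,q$ exchanged, turns each scalar term into $b(\zeta,\eta)\,u\otimes\zeta'$. A short bookkeeping of the two summands coming from the two tensor factors then produces exactly the first terms on the right-hand side of \eqref{e:3} and of \eqref{e:1}; here one uses that $b$ is even, so $b(\xi_i,\eta)$ and $b(\gamma,\eta)$ vanish unless $\eta$ has the matching parity, which fixes all the parity signs.

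The real work is in the group-algebra part, coming from $\psi_\kappa^B$ together with $-\omega(w,z)\Omega_\kappa$. Here I would insert $\psi_\kappa^B(v_p^*,u)=2\sum_{s}\frac{B(\alpha_s,v_p^*)B(u,\alpha_s)}{B(\alpha_s,\alpha_s)}\kappa(s)\,s$, collapse $\sum_q B(v_p,v_q)(v_q^*\otimes\zeta)=\beta(v_p)\otimes\zeta$ by \eqref{e:B}, and use $\sum_p B(\alpha_s,v_p^*)\beta(v_p)=\alpha_s$ from \eqref{e:BBB}. The decisive point is how a reflection $s$ moves past a factor $\beta(v_p)\otimes\zeta$ via the crossed-product relation \eqref{e:crossproduct2} and the action \eqref{e:actO}: on an even slot $s$ acts by $s(\alpha_s)=-\alpha_s$, contributing a sign, whereas on the odd slot $\gamma$ the action is trivial, so $s\gamma_{\alpha_s}=\gamma_{\alpha_s}s$. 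For \eqref{e:1} the slot $z=\gamma$ forces $\omega(w,z)=\omega(\xi_1,\gamma)=0$, so the $\Omega_\kappa$ term is absent and only one group-algebra term survives; using the trivial action on $\gamma$ it reassembles, via the definition \eqref{e:Ov} of $\oO_u$ and $B(u,\alpha_s)=B(\alpha_s,u)$, precisely into $2\,b(\xi_1,\eta)\,\oO_u$. For \eqref{e:3} two group-algebra terms survive (one per factor) and must be combined with $-\omega(\xi_1,\xi_2)[\Omega_\kappa,u\otimes\eta]$; computing the latter from \eqref{e:Omega} and \eqref{e:s} gives $-2\sum_s\frac{B(\alpha_s,u)}{B(\alpha_s,\alpha_s)}\kappa(s)(\alpha_s\otimes\eta)s$, and the three terms cancel by the elementary identity $b(\xi_2,\eta)\,\xi_1-b(\xi_1,\eta)\,\xi_2+b(\xi_1,\xi_2)\,\eta=0$, valid for any three vectors in the two-dimensional symplectic space $\ca V_{\bar0}$.

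The main obstacle is exactly this group-algebra bookkeeping: keeping the reflections $s$ on the correct side of the tensor factors, tracking the sign $s(\alpha_s)=-\alpha_s$ on even slots against the trivial action on $\gamma$, and recognizing that for \eqref{e:3} the surviving $\psi_\kappa^B$ and $\Omega_\kappa$ contributions cancel only after invoking the $\wedge^3=0$ (symplectic three-term) identity on $\ca V_{\bar0}$, while for \eqref{e:1} they instead assemble into the Dunkl-type element $\oO_u$.
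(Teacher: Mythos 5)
Your proposal is correct and takes essentially the same route as the paper's own proof: you expand via \eqref{e:Bwz2} and the derivation property of the supercommutator, collapse the double sums with \eqref{e:B}--\eqref{e:BBB}, and treat the $\psi_\kappa^B$ and $\Omega_\kappa$ contributions case by case, with the even-slot cancellation resting on exactly the paper's three-term identity $\omega(\xi_1,\xi_2)\eta + \omega(\xi_2,\eta)\xi_1 - \omega(\xi_1,\eta)\xi_2 = 0$ on $\ca V_{\bar 0} = \bC^2$ and the surviving group-algebra term assembling into $2\,b(\xi_1,\eta)\,\oO_u$ via \eqref{e:Ov}. The only cosmetic differences are that you write the crossed-product part as $[\Omega_\kappa, u\otimes\eta]$ where the paper writes $\sum_{s}\kappa(s)[s,u\otimes\eta]$, and that you note explicitly that the central constant in \eqref{e:Bwz2} drops out of the commutator.
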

\begin{proof}
	Let $\xi,\eta \in  \ca V$ be homogeneous elements for the $\bZ_2$-grading and $\xi_1\in \ca V_{\bar0}$, then by~\eqref{e:Bwz2}
		\[
	[(\xi_1 \odot \xi)(B),u\otimes \eta] =    \sum_{p,q=1}^d B_{pq} [ (v_p^*\otimes \xi_1)( v_q^*\otimes \xi) , u \otimes \eta] - \omega(\xi_1,\xi) \sum_{s\in\ca S} \kappa(s) [s,u\otimes \eta] \p.
	\]	
	First, note that $\omega(\xi_1,\xi) =0$ if $\xi \in \ca V_{\bar1}$, and $	[s,u\otimes \eta]=0$ for $\eta \in \ca V_{\bar1}$. 	
		For  $\eta \in \ca V_{\bar0}$, via~\eqref{e:actO} and~\eqref{e:s} we have
	\[
	[s,u\otimes \eta] = 	((s\cdot u - u  )\otimes \eta )\, s=   - \alpha_s^{\vee}( u ) \,  (\alpha_s\otimes \eta)\, s \p.
	\]
	
	By means of~\eqref{e:comre}, we find
	\begin{align*}
		[ (v^*_p\otimes \xi_1)(  v_q^*\otimes \xi ), u \otimes \eta]  
		= \ & v_p^* \otimes \xi_1 [ v_q^*\otimes \xi , u \otimes \eta]+ (-1)^{|\xi||\eta|} [ v_p^* \otimes \xi_1 , u\otimes \eta ]v_q^*\otimes \xi     \\ 
		= \ &  v_p^* \otimes \xi_1 (B( v_q^*, u )b(\xi, \eta )+\psi_\kappa^B( v_q^*, u )\omega(\xi, \eta )) \\*
		&  + (-1)^{|\xi||\eta|}  (B( v_p^*, u )b(\xi_1, \eta )  +\psi_\kappa^B( v_p^*, u )\omega(\xi_1, \eta ) ) v_q^*\otimes \xi \p.
	\end{align*} 
For $\eta = \gamma$, we have $\omega(\cdot,\gamma)=0$ and the cases where $\eta = \gamma$ now follow by~\eqref{e:BBB}. 

Next, we consider $\eta \in \ca V_{\bar0}$. Let $\xi = \xi_2 \in \ca V_{\bar0}$. We find using~\eqref{e:BB} and~\eqref{e:psiB}
		\begin{align*}
			\sum_{p,q=1}^d  B_{pq}  v_p^*\otimes \xi_1\psi_\kappa^B( v_q^*, u ) 
			&=  \sum_{s\in\mathcal S}\alpha_s^{\vee}( u ) \,  \kappa(s) \alpha_s \otimes \xi_1  s	\p,
			\\	
			\sum_{p,q=1}^d \psi_\kappa^B( v_p^*, u )  B_{pq} v_q^*\otimes \xi_2 	 
			&=  \sum_{s\in\mathcal S} \alpha_s^{\vee}( u ) \,   \kappa(s)\, s \, \alpha_s \otimes \xi_2  
			\p.
		\end{align*} 	
		Now, collecting the appropriate terms, and using that $ \alpha_s$ is a $-1$ eigenvector of $s\in\mathcal S$, we have
		\[
		\sum_{s\in\mathcal S} \alpha_s^{\vee}( u ) \,  \kappa(s) \alpha_s \otimes (\omega(\xi_1, \xi_2 )\eta + \omega(\xi_2, \eta ) \xi_1 - \omega(\xi_1, \eta ) \xi_2  ) s \p,
		\]  
		where $\omega(\xi_1, \xi_2 )\eta  + \omega(\xi_2, \eta ) \xi_1- \omega(\xi_1, \eta ) \xi_2 =0$ for all $\xi_1,\xi_2,\eta\in \ca V_{\bar0} =\bC^2 $.
		
Finally, we consider the case $\xi = \gamma \in \ca V_{\bar1}$ and $\eta = \xi_2 \in \ca V_{\bar0}$. Here, the remaining terms are
	\begin{align*}
[(\xi_1 \odot \gamma)(B),u\otimes \xi_2] &=    \sum_{p,q=1}^d B_{pq}   (B( v_p^*, u )b(\xi_1, \xi_2 )  +\psi_\kappa^B( v_p^*, u )\omega(\xi_1, \xi_2 ) ) v_q^*\otimes \gamma \\
& = b(\xi_1, \xi_2 )u\otimes \gamma  + \omega(\xi_1, \xi_2 ) \sum_{s\in\mathcal S} \alpha_s^{\vee}( u ) \,   \kappa(s)\, s \, \alpha_s \otimes \gamma 
 \p.
	\end{align*} 		
		The desired result now follows by~\eqref{e:Ov}.
	\end{proof}

When restricted to $ \ca V_{\bar0}$, in terms of the basis $x^+,x^-$, the relations of Lemma~\ref{l:lemma3} can be written as follows. 
For $v^-\in V$ and $v^+\in V^*$, in $H_\kappa \subset A_\kappa$, we have 
\begin{align}\label{e:lemma1a}
	[(x^-\odot x^-)(B),v^+] &= 2\beta(v^+)\p,  & [(x^+\odot x^+)(B),v^-]& = -2\beta(v^-)	\p,
	\\\label{e:lemma1b}
	[(x^+\odot x^-)(B),v^+]& = v^+\p,& [(x^+\odot x^-)(B),v^-] &= -v^-	\p.
\end{align}

Rewriting~\eqref{e:1} for $\xi_1=x^-$ and  $\xi_2=x^+$, we get for $u\in V^*$
\begin{equation}\label{e:Ov2}
	\oO_u = \frac12\big([(x^-\odot \gamma)(B),u] - \gamma_u\big) = \frac12\Big(\sum_{p=1}^d [v_{p},u]\gamma_{v^{*}_{p}}  - \gamma_u\Big)\p,
\end{equation}
where $v_1^*,\dotsc,v_d^*$ denotes a basis of $V^*$ dual to a basis $v_1,\dotsc,v_d$ of $V$. Using~\eqref{e:comre}, the expression~\eqref{e:Ov2} reduces to~\eqref{e:Ov}.

We can also consider the map $\oO \colon V^* \to \rho(\widetilde G) \colon u \mapsto \oO_u$, where  $\oO_u$ is given by~\eqref{e:Ov}.
We then have the following result for 
\begin{equation}
(\oO \otimes \gamma)(B)  =\sum_{p,q=1}^d  \oO_{v_p^*} \,B(v_p,v_q) \gamma_{v_q^*} = \sum_{p=1}^d \oO_{x_p} e_p\p.
\end{equation}

\begin{lemma}\label{l:Oug}  In $A_\kappa$, one has
	\[
	(\oO \otimes \gamma)(B) = \Omega_\kappa = 	(\gamma \otimes \oO)(B)\p.
	\]
	
	
\end{lemma}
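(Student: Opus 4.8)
The plan is to compute $(\oO \otimes \gamma)(B)$ directly from its definition as a sum over a basis, and show it collapses to the central element $\Omega_\kappa$ given in~\eqref{e:Omega}. We start from the expression
\[
(\oO \otimes \gamma)(B) = \sum_{p,q=1}^d \oO_{v_p^*}\, B(v_p,v_q)\, \gamma_{v_q^*}\p,
\]
and substitute the defining formula~\eqref{e:Ov} for $\oO_{v_p^*}$, namely $\oO_{v_p^*} = \tfrac12\sum_{s\in\mathcal S} \alpha_s^{\vee}(v_p^*)\,\kappa(s)\,s\,\gamma_{\alpha_s}$. This turns the double sum into a triple sum over $p,q$ and $s\in\mathcal S$, with a factor $s\,\gamma_{\alpha_s}\,\gamma_{v_q^*}$ weighted by $B(v_p,v_q)\,\alpha_s^{\vee}(v_p^*)$.

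The first key step is to carry out the sum over $p$ and $q$ on the purely scalar/Clifford part. Here I would use the basis identities~\eqref{e:B}--\eqref{e:BBB} to contract the coefficients $B(v_p,v_q)\,\alpha_s^{\vee}(v_p^*)$ with $\gamma_{v_q^*}$. Since $\alpha_s^{\vee} = 2\beta(\alpha_s)/B(\alpha_s,\alpha_s)$, summing over $p$ converts $\sum_p \alpha_s^{\vee}(v_p^*)\,v_p^{\,?}$ into an expression involving $\alpha_s$ itself; then summing over $q$ with the $B(v_p,v_q)$ factor reassembles $\gamma_{\alpha_s}$ via~\eqref{e:BB}. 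The net effect is that the $p,q$ sums produce $\gamma_{\alpha_s}\gamma_{\alpha_s}$ up to the normalization $B(\alpha_s,\alpha_s)$. The second key step is then to use the Clifford relation~\eqref{e:Clifcom}, which gives $\gamma_{\alpha_s}\gamma_{\alpha_s} = B(\alpha_s,\alpha_s)\,1$; this cancels the denominator and leaves exactly $\sum_{s\in\mathcal S}\kappa(s)\,s = \Omega_\kappa$. For the second equality $(\gamma\otimes\oO)(B)=\Omega_\kappa$, I would either repeat the analogous computation with $\gamma_{v_p^*}$ on the left and $\oO_{v_q^*}$ on the right, or—more cheaply—invoke Lemma~\ref{l:Ogammas} (the antisymmetrization identity~\eqref{e:Ogammas} for $n=2$, i.e.\ equation~\eqref{e:gammaO}) to interchange the roles of $\oO$ and $\gamma$ inside the $B$-weighted symmetric sum.

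The main obstacle I anticipate is bookkeeping with the dual bases and the factor $\gamma_{\alpha_s}$ appearing \emph{twice} for a single reflection $s$: one must be careful that the $\rho(\widetilde G)$ realization of $s$ already carries one factor of $\gamma_{\alpha_s}$ (through $\tilde s = \gamma(\alpha_s)/\sqrt{B(\alpha_s,\alpha_s)}$ and the definition~\eqref{e:rho} of $\rho$), so that when $\oO_{v_p^*}$ is written in the $\rho(\widetilde G)$ form of~\eqref{e:Ov} and then multiplied by the external $\gamma_{v_q^*}$, the resulting product of two Clifford elements is what collapses. Tracking the $1/2$ from~\eqref{e:Ov}, the $\sqrt{B(\alpha_s,\alpha_s)}$ normalizations, and the sign conventions so that everything reduces cleanly to $\sum_s \kappa(s)s$ is the delicate part; once the contraction is set up correctly the Clifford square does the rest.
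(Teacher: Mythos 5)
Your core computation is exactly the paper's proof of the first equality: insert the definition \eqref{e:Ov} of $\oO_{v_p^*}$, contract the $p,q$ sums using \eqref{e:BBB} to reassemble $\gamma_{\alpha_s}/B(\alpha_s,\alpha_s)$, and let the Clifford relation $\gamma_{\alpha_s}^2 = B(\alpha_s,\alpha_s)$ cancel the normalization, leaving $\sum_{s\in\mathcal S}\kappa(s)\,s = \Omega_\kappa$. Your first option for the second equality is also sound: by \eqref{e:actO} the group $G$ acts trivially on $\ca U_{\bar 1}$, so each $s\in\mathcal S$ commutes with the whole Clifford algebra inside $A_\kappa$, and the mirror-image computation for $(\gamma\otimes\oO)(B)$ contracts to $\sum_{s}\kappa(s)\,\gamma_{\alpha_s}\, s\,\gamma_{\alpha_s}/B(\alpha_s,\alpha_s) = \sum_{s}\kappa(s)\, s\,\gamma_{\alpha_s}^2/B(\alpha_s,\alpha_s) = \Omega_\kappa$.

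Your proposed ``cheaper'' shortcut, however, fails. Lemma~\ref{l:Ogammas} for $n=2$, i.e.\ \eqref{e:gammaO}, is the \emph{antisymmetrized} identity $\oO_u\gamma_v - \oO_v\gamma_u = \gamma_u\oO_v - \gamma_v\oO_u$: it is skew in $(u,v)$, whereas $(\oO\otimes\gamma)(B)$ and $(\gamma\otimes\oO)(B)$ are contractions against the \emph{symmetric} tensor $B(v_p,v_q)$. Contracting \eqref{e:gammaO} with $B_{pq}$ annihilates both sides identically (each is a $B$-weighted sum of an expression antisymmetric in the summation indices), so it yields $0=0$ and carries no information about the symmetric sums you need to compare. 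If you do want a shortcut, the right tool is the anticommutator identity \eqref{e:Ogamma}: combined with \eqref{e:combuv} it reads $\gamma_u\oO_v + \oO_v\gamma_u = \psi_\kappa^B(u,v)$, and contracting with $B_{pq}$, using $\sum_{p,q} B(\alpha_s,v_p^*)B(v_p,v_q)B(v_q^*,\alpha_s) = B(\alpha_s,\alpha_s)$ from \eqref{e:BBB}, gives $(\oO\otimes\gamma)(B) + (\gamma\otimes\oO)(B) = 2\Omega_\kappa$; together with your first computation this does deliver $(\gamma\otimes\oO)(B) = \Omega_\kappa$. So your proof stands provided you take the first of your two options (or replace the second by the anticommutator argument just described).
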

\begin{proof}
	Using~\eqref{e:Ov} and \eqref{e:BBB}, we have
	\[
		(\oO \otimes \gamma)(B) 
		= \sum_{p,q=1}^d  B(v_p,v_q)\frac12\sum_{s\in\mathcal S}  \alpha_s^{\vee}(v^*_p ) \,  \kappa(s) \,s \,	\gamma_{\alpha_s} \, \gamma_{v_q^*}
			 = \sum_{s\in\mathcal S}   \frac{ \kappa(s) \,s \,	\gamma_{\alpha_s} \, \gamma_{\alpha_s}}{B(\alpha_s,\alpha_s)}
		\p.\qedhere
	\]	
%
\end{proof}

\begin{propo}\label{p:BB}
	For 
	$z_1,z_2,z_3,z_4\in \ca{V}$ homogeneous for the $\bZ_2$-grading, denoting $	w_1  = 	b(z_2,z_3)z_1 + (-1)^{|z_2||z_3|}b(z_1,   z_3)z_2$ and  $w_2 =b( z_2 , z_4)z_1 + (-1)^{|z_2||z_4|}b( z_1 , z_4)z_2$,
	 in $A_\kappa$ one has
	\begin{equation}\label{e:BB1}
		\begin{aligned}~
			& [	(z_1\odot z_2)(B),(z_3\odot z_4)(B)]=   (w_1\odot  z_4)(B) 
			+(-1)^{(|z_1|+|z_2|)|z_3|} (z_3\odot w_2 )(B)  \p.
		\end{aligned}
	\end{equation}
\end{propo}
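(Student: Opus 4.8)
The plan is to use that $[(z_1\odot z_2)(B),\,\cdot\,]$ is a superderivation of $A_\kappa$ of parity $|z_1|+|z_2|$, and to apply it to the explicit quadratic expression for $(z_3\odot z_4)(B)$ coming from~\eqref{e:Bwz}. Distributing the derivation over the two-factor products $(v_p^*\otimes z_3)(v_q^*\otimes z_4)$ reduces everything to the action of $(z_1\odot z_2)(B)$ on the single generators $v^*\otimes z_j\in\ca U$, which is computed by Lemma~\ref{l:lemma3}. Because both sides of~\eqref{e:BB1} are superbilinear in $(z_1,z_2)$ and in $(z_3,z_4)$ and respect $z\odot z'=(-1)^{|z||z'|}z'\odot z$, it would also suffice to check only the basis vectors $x^+,x^-,\gamma$ of $\ca V$; I would keep the $z_i$ general and use the derivation, falling back on the basis only to settle signs.

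Combining~\eqref{e:3} and~\eqref{e:1}, the action sends each factor $v^*\otimes z_j$ to the transport combination $b(z_2,z_j)\,v^*\otimes z_1+(-1)^{|z_1||z_2|}b(z_1,z_j)\,v^*\otimes z_2$, plus --- exactly when precisely one of $z_1,z_2$ equals $\gamma$ --- a further term $2\,b(z_{\mathrm{even}},z_j)\,\oO_{v^*}$, where $z_{\mathrm{even}}$ denotes the even member of the pair. The transport parts, recombined through~\eqref{e:Bwz} and the pairings $b(z_i,z_j)$ and after collecting the $\bZ_2$-grading signs, assemble into $(w_1\odot z_4)(B)+(-1)^{(|z_1|+|z_2|)|z_3|}(z_3\odot w_2)(B)$; this is the orthosymplectic symmetric-tensor bracket already present in the undeformed ($\kappa=0$) Weyl--Clifford case, with $w_1$ and $w_2$ produced by the single contractions of $\{z_1,z_2\}$ against $z_3$ and against $z_4$ respectively.

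The new content is the fate of the $\oO_{v^*}$ terms, which encode the Dunkl deformation. After distributing the derivation, each appears multiplied by the surviving generator $v^*\otimes z_j$ and summed against $B$. If that companion generator is $\gamma$, the sum is $(\oO\otimes\gamma)(B)$, which collapses to the central element $\Omega_\kappa$ by Lemma~\ref{l:Oug}; this is precisely the $-\omega(\cdot,\cdot)\Omega_\kappa$ correction that the even quadratics on the right-hand side carry in~\eqref{e:Bwz2}. If the companion generator is even, the terms pair up into an expression of the representative form $\sum_p(\oO_{v_p^*}\beta(v_p)+\beta(v_p)\oO_{v_p^*})$, which I would show vanishes by expanding $\oO_{v^*}$ via~\eqref{e:Ov}, moving the reflections past the even factor using~\eqref{e:s}, and contracting with $\sum_p B(\alpha_s,v_p^*)\beta(v_p)=\alpha_s$ from~\eqref{e:BB}; this matches the absence of an $\Omega_\kappa$ term in those right-hand-side quadratics.

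I expect the main obstacle to be exactly this reconciliation: showing that the $\oO$-contributions are entirely accounted for by the $\Omega_\kappa$ corrections hidden in~\eqref{e:Bwz2}, split into the two mechanisms above, while simultaneously controlling the proliferation of grading signs $(-1)^{|z_1||z_2|}$, $(-1)^{(|z_1|+|z_2|)|z_3|}$ and the supersymmetrization signs. The safest way to discharge it is to test the identity on $\{x^+,x^-,\gamma\}$: there $(\gamma\odot\gamma)(B)$ is the scalar $0$, the purely even brackets reduce to the $\fr{sl}(2)$ relations~\eqref{e:lemma1a}--\eqref{e:lemma1b}, and the genuinely deformed cases are the few brackets involving $(\xi\odot\gamma)(B)$, each a short computation from~\eqref{e:1} and Lemma~\ref{l:Oug}.
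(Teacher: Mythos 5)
Your proposal is correct and follows essentially the same route as the paper's proof: a direct computation distributing the adjoint action of $(z_1\odot z_2)(B)$ over the quadratic expansion of $(z_3\odot z_4)(B)$, evaluating on single generators via Lemma~\ref{l:lemma3}, collapsing the deformation terms with Lemma~\ref{l:Oug}, and exploiting $\dim_{\bC}(\ca{V}_{\bar 1})=1$ (equivalently, checking on the basis $x^\pm,\gamma$). Your explicit cancellation $\sum_p(\oO_{v_p^*}\beta(v_p)+\beta(v_p)\oO_{v_p^*})=0$, obtained from $s(\alpha_s)=-\alpha_s$ together with the contraction \eqref{e:BB}, correctly settles the even-companion cases that the paper's single worked example $[(\xi_1\odot\gamma)(B),(\xi_2\odot\gamma)(B)]=2(\xi_2\odot\xi_1)(B)$ leaves implicit.
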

\begin{proof}
Follows by direct computation using Lemma~\ref{l:lemma3} and the fact that $\dim_{\bC}(\ca{V}_{\bar 1}) = 1$. 
For instance,  using Lemma~\ref{l:lemma3},~\eqref{e:comre}, \eqref{e:BB} and Lemma~\ref{l:Oug}, we have
\begin{align*}
	& \quad  [	(\xi_1\odot \gamma)(B),(\xi_2\odot \gamma)(B)]\\ & =     \sum_{p,q=1}^d B(v_p,v_q) \Big( [	(\xi_1\odot \gamma)(B), v_p^* \otimes \xi_2  ]  v_q^* \otimes \gamma
	+  v_p^* \otimes \xi_2 [	(\xi_1\odot \gamma)(B),  v_q^* \otimes \gamma ] \Big) \\
	 & = \sum_{p,q=1}^d B(v_p,v_q) \Big( b(\xi_1,\xi_2)(v_p^* \otimes\gamma + 2 \oO_{v^*_p}) v_q^* \otimes \gamma 
	 +  (v_p^* \otimes \xi_2) 2( v_q^* \otimes \xi_1)  \Big)\\
	 & = 2 	(\xi_2\odot \xi_1)(B) \p. \qedhere
\end{align*}	
\end{proof}

Proposition~\ref{p:BB} shows that the elements $(w \odot z)(B) $  for $w,z\in \ca V $ form a realization of the Lie superalgebra $\fr{osp}(\ca V,b) \cong \fr{osp}(1|2)$ in $A_\kappa$.
The elements $(w \odot z)(B) $ for $w,z\in \ca V_{\bar{0}}=\bC^2 $  form a realization of the even subalgebra $\fr{sp}(\ca V_{\bar{0}},\omega) \cong \fr{sl}(2)$ in $H_\kappa$.
In particular, using the basis $x^-,x^+,\gamma$ of $\ca V$, we have that the elements
\begin{equation}
	\label{e:osp}
	\begin{aligned}
		F^+ & \colonequals 	\frac{1}{\sqrt2} (x^+ \odot \gamma)(B)
		= 	\frac{1}{\sqrt2}\sum_{p,q=1}^d v^{*}_{p}B_{pq}\gamma_{v^{*}_{q}}\\
			F^- & \colonequals  	\frac{1}{\sqrt2}(x^- \odot \gamma)(B) 
			= 	\frac{1}{\sqrt2}\sum_{p,q=1}^d \beta(v^{*}_{p})B_{pq}\gamma_{v^{*}_{q}}
				= 	\frac{1}{\sqrt2}\sum_{p=1}^d v_{p}\gamma_{v^{*}_{p}}\\
	H &\colonequals 	(x^+ \odot x^-)(B)
	=\sum_{p,q=1}^d v^{*}_{p}B_{pq}\beta(v^{*}_{q}) +\frac{d}{2}+ \Omega_\kappa 
	=\sum_{p=1}^d v^{*}_{p}v_{p} +\frac{d}{2}+ \Omega_\kappa,\\ 
		E^+& \colonequals  \frac12		(x^+ \odot x^+)(B)
		= \frac12\sum_{p,q=1}^d v^{*}_{p}B_{pq}v^{*}_{q},\\ 
		E^-& \colonequals  -\frac12		(x^- \odot x^-)(B)
		= -\frac12\sum_{p,q=1}^d \beta(v^{*}_{p})B_{pq}\beta(v^{*}_{q})
		= -\frac12\sum_{p,q=1}^d v_{p}B_{pq}v_{q}\p,
	\end{aligned}
\end{equation}
	satisfy the commutation relations~\eqref{e:osp12re}.

	\section{Supercentralizers}\label{s:supercent}
	
To describe the supercentralizer of the realization of 	$\mathfrak{osp}(1|2)$ in $A_\kappa$ given by~\eqref{e:osp}, we first look at the centralizer of its even subalgebra $\mathfrak{sl}(2)$.

\subsection{Centralizer of \texorpdfstring{$\mathfrak{sl}(2)$}{sl(2)}}\label{s:sl2}%

In	$H_\kappa\subset A_\kappa$,  we define $M_{uv} = M(u,v) \colonequals  u\beta(v) -  v\beta(u)$ for $u,v\in V^*$. 
Similar to $(w \odot z)(B) $ in~\eqref{e:Bwz}, every element of $V^*$ corresponds to a copy of $\ca V$ in $V^* \otimes \ca V$, or a copy of $\ca V_{\bar0}$ in $V^* \otimes \ca V_{\bar0}$. 
Using now the skew-symmetric form $\omega$, we have for $u,v\in V^*$,
\begin{equation}\label{e:DAMO}
	\begin{aligned}
		(u\wedge v)(\omega) &= \frac12( u\beta(v) -  \beta(u)v - v\beta(u) + \beta(v) u) \\
		&	=  u\beta(v) -  v\beta(u) + \frac12( [\beta(v), u]-[\beta(u),v] ) \\
		&	=  u\beta(v) -  v\beta(u)  \\
		&	=  \beta(v)u -  \beta(u)v \p,
	\end{aligned}
\end{equation}
where we used Lemma~\ref{l:Buv}. 
In the Dunkl operator realization, $M(u,v)$ becomes an angular momentum operator where the partial derivative is replaced by a Dunkl operator, see also~\cite{Feigin,Calvert}.

By means of the relations~(\ref{e:lemma1a}--\ref{e:lemma1b}), 
it is easily verified that the elements of the form $M(u,v)$, for $u,v\in V^*$, commute with $(w \odot z)(B) $, for $w,z\in\ca V_{\bar 0}$. 

In~\cite[Theorem 6.5]{CDM}, the authors proved that the centralizer of the $\fr{sl}(2)$ realization inside $H_\kappa$ is the associative subalgebra of $H_\kappa$  generated by the group $G$ and the Dunkl angular momentum operators, that is
\begin{equation}\label{e:Centsl2}
		\Cent_{H_\kappa}(\fr{sl}(2)) =  \langle M_{uv} \mid u,v \in V^*\rangle  \rtimes G\p,
\end{equation}
where the action of $g\in G$ on $M_{uv}$ is given by $ M_{g\cdot u\,g\cdot v}$ for $u,v \in V^*$ and $g\in G$.
	

The elements $M(u,v)$ for $u,v\in V^*$ generate a deformation of (the associative algebra generated by) the orthogonal Lie algebra $\fr{so}(V,B)\cong\fr{so}(d)$. The proof proceeds in the same way as the one for \cite[Theorem~2.5]{DOV} or \cite[Proposition~6.7]{CDM}.
\begin{propo}\label{p:bbH}
	For 
	$u,v,x,y\in V^*$, in $H_\kappa$ one has
	\begin{equation}\label{e:bbH}
		\begin{aligned}~
			[	M(u,v),M(x,y)]=\ 	&  M(v,x)B_\kappa(u, y)   -M(u,  x)B_\kappa(v, y) \\
			&  -M( v , y)B_\kappa(u, x)   + M( u , y)B_\kappa(v, x) \p,
		\end{aligned}
	\end{equation}	
	where $B_\kappa = B +\psi_\kappa^B$, with the latter given in~\eqref{e:psiB}.
\end{propo}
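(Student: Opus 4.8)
The plan is to compute the double commutator by regarding $\ad_{M(u,v)} \colonequals [M(u,v),\cdot\,]$ as a derivation of $H_\kappa$ and evaluating it on the generators $V^*$ and $V = \beta(V^*)$. The two inputs needed are that elements of $V^*$ (resp.\ of $V$) commute among themselves by~\eqref{e:RC}, and the identity $[\beta(w),z] = B_\kappa(w,z)$ for $w,z\in V^*$, which is just~\eqref{e:combuv} rewritten using $B_\kappa = B + \psi_\kappa^B$ from~\eqref{e:psiB}. A one-line calculation then gives, for $x\in V^*$,
\[
[M(u,v),x] = u\,B_\kappa(v,x) - v\,B_\kappa(u,x)\p,
\]
and, using that the elements of $V$ commute and that $B_\kappa$ is symmetric by Lemma~\ref{l:Buv},
\[
[M(u,v),\beta(x)] = B_\kappa(v,x)\,\beta(u) - B_\kappa(u,x)\,\beta(v)\p.
\]

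First I would write $M(x,y) = x\beta(y) - y\beta(x)$ and apply $\ad_{M(u,v)}$ termwise via the Leibniz rule, substituting the two formulas above. This presents $[M(u,v),M(x,y)]$ as a sum of eight monomials, each the ordered product of a factor from $V^*$, a group-algebra-valued factor $B_\kappa(\cdot,\cdot)\in\bC[G]$, and a factor from $V$, with the middle factor thus sitting \emph{between} the two linear ones. The desired right-hand side instead carries each $B_\kappa(\cdot,\cdot)$ on the far right, so the whole task reduces to an ordering question.

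The hard part will be that the structure constants $B_\kappa(\cdot,\cdot)$ do not commute with $V$ or $V^*$, so moving each one rightward past the adjacent factor $\beta(c)$ incurs a correction $[B_\kappa(a,b),\beta(c)] = [\psi_\kappa^B(a,b),\beta(c)]$, the scalar part $B$ commuting freely. Using~\eqref{e:s} and~\eqref{e:beta}, this evaluates to
\[
[\psi_\kappa^B(a,b),\beta(c)] = -4\sum_{s\in\mathcal S}\frac{B(\alpha_s,a)\,B(\alpha_s,b)\,B(\alpha_s,c)}{B(\alpha_s,\alpha_s)^2}\,\kappa(s)\,\beta(\alpha_s)\,s\p.
\]

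The observation that closes the argument is that this expression, call it $F(a,b,c)$, is \emph{totally symmetric} in $a,b,c$, since $B$ is symmetric and each summand depends on $a,b,c$ only through the product $B(\alpha_s,a)B(\alpha_s,b)B(\alpha_s,c)$. Once all eight monomials are reordered to bring every $B_\kappa$ to the right, the eight corrections group by their leading linear factor into four pairs, namely $u\bigl(F(v,x,y)-F(v,y,x)\bigr)$, $v\bigl(F(u,y,x)-F(u,x,y)\bigr)$, and the two analogues led by $x$ and by $y$; each pair vanishes by the total symmetry of $F$. What survives is exactly the four monomials of the claimed identity. I expect the symmetry cancellation to be the only substantive point, the remainder being the bookkeeping familiar from~\cite[Theorem~2.5]{DOV} and~\cite[Proposition~6.7]{CDM}.
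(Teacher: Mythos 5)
Your proof is correct, and its skeleton matches the paper's: expand $[M(u,v),M(x,y)]$ by the Leibniz rule, collect the four $M(\cdot,\cdot)B_\kappa(\cdot,\cdot)$ terms, and cancel the leftover third-order terms in pairs by a symmetry. Where you genuinely differ is in how that symmetry is established. The paper's leftovers are double commutators such as $u\bigl([[\beta(x),v],\beta(y)]-[[\beta(y),v],\beta(x)]\bigr)$, disposed of by Lemma~\ref{l:uvx}, which follows from the defining relations~\eqref{e:RC} alone: it is the Jacobi identity plus $[\beta(x),\beta(y)]=0$, since $[[\beta(x),v],\beta(y)]-[[\beta(y),v],\beta(x)]=[[\beta(x),\beta(y)],v]=0$, so no reflection data enter. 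You instead normal-order the eight monomials and compute the correction $F(a,b,c)=[\psi_\kappa^B(a,b),\beta(c)]$ in closed form from~\eqref{e:s}; your formula, including the coefficient $-4$ and the placement of $\beta(\alpha_s)$ to the left of $s$, checks out, and total symmetry of $F$ is exactly what the four pairings require --- note that the $x$- and $y$-led pairs need symmetry in the \emph{first and third} slots of $F$, not merely the last two, which your manifestly symmetric expression delivers. The trade-off: the paper's argument is more elementary and relation-level, so it survives any deformation with relations of the shape~\eqref{e:RC} without unpacking $\psi_\kappa^B$, whereas your closed form costs a computation but exhibits the totally symmetric third-order ``structure constant'' explicitly, which could be of independent use. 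In fact, since $[\beta(a),b]=B_\kappa(a,b)$ by~\eqref{e:combuv} and the scalar part $B(a,b)$ is central, one has $F(a,b,c)=[[\beta(a),b],\beta(c)]$, so your total-symmetry claim is precisely Lemma~\ref{l:uvx} combined with Lemma~\ref{l:Buv}, made explicit.
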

\begin{proof}Use $	M(u,v) =   u\beta(v) -  v\beta(u)$,  $	M(x,y) =   x\beta(y) -  y\beta(x)$
	\begin{align*}
		& 	[	M(u,v),M(x,y)]= 		
		u [\beta(v),x]\beta(y) - 	v [\beta(u),x]\beta(y)	
		-u [\beta(v),y]\beta(x)	 + 	v [\beta(u),y]\beta(x)	\\
		& \qquad\qquad\qquad\quad-	x [\beta(y),u]\beta(v)	+ 	x [\beta(y),v]\beta(u)	 
		+y [\beta(x),u]\beta(v)	 -	y [\beta(x),v]\beta(u)
	\end{align*}
	which, using~\eqref{e:combuv}, equals
	\begin{align*}
		= 	\ & 	
		M(u,y) [\beta(x),v] - M(v,y) [\beta(u),x]	
		-M(u,x) [\beta(y),v]	 + 	M(v,x) [\beta(u),y]	\\
		&+ 	u ([[\beta(x),v],\beta(y) ]-[[\beta(y),v],\beta(x)])
		- 	v ([[\beta(x),u],\beta(y)]- [[\beta(y),u],\beta(x)])	\\
		&
		-	x ([[\beta(u),y],\beta(v)]	- [[\beta(v),y],\beta(u)])	 
		+y ([[\beta(u),x],\beta(v)]	 -	 [[\beta(v),x],\beta(u)])\p.
	\end{align*}	
	The last two lines vanish by Lemma~\ref{l:uvx}, which we prove next. Hence, the desired result follows	by~\eqref{e:comre}.
\end{proof}

\begin{lemma}\label{l:uvx}
	For $u,v \in V$ and $x^*,y^* \in V^*$, in $H_\kappa$
	\[	
	[[x^*,u],v] = 	[[x^*,v],u]  \p,\qquad [[x^*,v],y^*] = 	[[y^*,v],x^*] 	\p.
	\]	
\end{lemma}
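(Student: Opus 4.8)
The plan is to exploit the fact that in $H_\kappa$ a commutator between an element of $V^*$ and an element of $V$ already lands in the group algebra $\bC[G]$. Concretely, by~\eqref{e:RC} (and $[x^*,u]=-[u,x^*]$) one has, for $x^*\in V^*$ and $u\in V$,
\[
[x^*,u] = -\langle u,x^*\rangle\,1 - \sum_{s\in\mathcal{S}} \langle u,\alpha_s\rangle\langle\alpha_s^{\vee},x^*\rangle\,\kappa(s)\,s,
\]
which is a scalar plus a $\bC$-linear combination of reflections. The outer commutator in each identity therefore reduces to bracketing a group element against $v\in V$ (first identity) or against $y^*\in V^*$ (second identity), the central scalar term contributing nothing. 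I would also note at the outset that $V$ and $V^*$ are each internally commutative in $H_\kappa$, so no further terms are generated.

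For the outer brackets I would use the cross relation $sws^{-1}=s(w)$, giving $[s,w]=(s(w)-w)s$, together with the reflection formulas~\eqref{e:s}. These yield $[s,v]=-\langle\alpha_s,v\rangle\,\alpha_s^{\vee}\,s$ and $[s,y^*]=-\langle\alpha_s^{\vee},y^*\rangle\,\alpha_s\,s$. Substituting into the displayed expression, the left-hand side of the first identity becomes
\[
[[x^*,u],v]=\sum_{s\in\mathcal{S}}\langle u,\alpha_s\rangle\langle\alpha_s,v\rangle\langle\alpha_s^{\vee},x^*\rangle\,\kappa(s)\,\alpha_s^{\vee}\,s,
\]
and the computation for the second identity (using $[x^*,v]$ in place of $[x^*,u]$ and bracketing against $y^*$) gives
\[
[[x^*,v],y^*]=\sum_{s\in\mathcal{S}}\langle v,\alpha_s\rangle\langle\alpha_s^{\vee},x^*\rangle\langle\alpha_s^{\vee},y^*\rangle\,\kappa(s)\,\alpha_s\,s.
\]

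The conclusion is then immediate from the symmetry of the scalar coefficients: the first sum is invariant under $u\leftrightarrow v$ since $\langle u,\alpha_s\rangle\langle\alpha_s,v\rangle$ is symmetric in $u,v$, while the second is invariant under $x^*\leftrightarrow y^*$ since $\langle\alpha_s^{\vee},x^*\rangle\langle\alpha_s^{\vee},y^*\rangle$ is symmetric in $x^*,y^*$. I do not expect a genuine obstacle here, as the argument is an entirely routine computation; the only points that need care are the bookkeeping of the sign and pairing conventions in~\eqref{e:RC} and~\eqref{e:s}, and the observation that the central scalar term is killed by the second bracket so that only the reflection part survives and carries the symmetry.
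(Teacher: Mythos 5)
Your proposal is correct, but it proves the lemma by a genuinely different route than the paper. The paper's proof is purely formal: it expands $[[x^*,u],v]-[[x^*,v],u]$ into eight monomials and observes that everything cancels down to $x^*[u,v]-[u,v]x^*$, which vanishes because $[u,v]=0$ by the first line of \eqref{e:RC}; equivalently, both identities are instances of the Jacobi identity \eqref{e:Jacobi} combined with the fact that $V$ and $V^*$ are abelian subspaces of $H_\kappa$. In particular the paper never touches the second line of \eqref{e:RC}, so its argument is insensitive to the deformation and would survive any modification of the cross relation. You instead substitute the explicit cross relation, push the group elements through $v$ (resp.\ $y^*$) via $[s,w]=(s(w)-w)s$ and the reflection formulas \eqref{e:s}, and read the symmetry off the closed-form coefficients; your signs and formulas check out against \eqref{e:RC}, \eqref{e:s} and \eqref{e:crossedproduct}, and the central scalar term is indeed killed by the outer bracket. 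What your computation buys is more than the lemma asserts: explicit expressions showing that the double brackets lie in $V\cdot\bC[G]$ resp.\ $V^*\cdot\bC[G]$, in the same spirit as the formula \eqref{e:combuv} in Lemma~\ref{l:Buv}; what it costs is dependence on the exact normalization of $\alpha_s^{\vee}$ and the pairing conventions, which the paper's proof avoids entirely. One small remark: your opening observation that $V$ and $V^*$ are internally commutative is never actually used in your computation (no product of two elements of $V$, or of $V^*$, ever appears there), whereas in the paper's proof that commutativity is the \emph{only} ingredient.
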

\begin{proof}
	Writing out	$[[x^*,u],v] -	[[x^*,v],u]$ we have
	\[
	(x^*u- ux^*)v-v(x^*u- ux^*) -(x^*v-vx^*)u + u(x^*v-vx^*) \p,
	\]
	where all terms cancel using~\eqref{e:RC}. The other identity follows in the same way.
\end{proof}

	\subsection{Supercentralizer of \texorpdfstring{$\mathfrak{osp}(1|2)$}{osp(1|2)}	}\label{s:osp12}%
	

 
Recall that $P_\pm$ is given by~\eqref{e:Pdelta}, the anti-symmetrization operator by~\eqref{e:asym}, and the quantization map by~\eqref{e:quant}. 
We now define the following elements, which, by Proposition~\ref{p:osp12}, are in the supercentralizer of $\mathfrak{osp}(1|2)$  in $ A_\kappa  = H_\kappa \otimes \ca C$. An	explicit expression is given in Lemma~\ref{l:Oun}.

\begin{defin}\label{d:OA}
	For a positive integer $n$ and $u_1,\dotsc,u_n\in V^*$, we define
	\begin{equation}\label{e:O}
		O_{u_1\dotsm u_n} 
		\colonequals -	P_{\pm} (  q (u_1 \wedge \dotsm \wedge  u_n))/2 = -	P_{\pm} (  \ca A (\gamma_{u_1 \dotsm u_n}))/2 \in A_\kappa \p,
	\end{equation}
which is  skew-symmetric multilinear in its indices.
	
	
	%
\end{defin}

Note that the factor $-1/2$ is chosen to coincide with the definition in~\cite{DOV}, and to have a coefficient of 1 for $M_{uv}$ in~\eqref{e:Ouv2}.

\begin{lemma}\label{l:groupaction}
The group $\rho(\widetilde{G})$ interacts with the elements~\eqref{e:O} as follows
	\begin{equation}
		\rho(\tilde g ) O_{u_1 \dotsb u_n} = (-1)^{|\tilde g|n} O_{p(\tilde g)\cdot u_1 \dotsb p(\tilde g)\cdot u_n} \rho(\tilde g ) \p,
	\end{equation}	
for $\tilde g \in \widetilde G$ and $u_1,\dotsc,u_n\in V^*$.
\end{lemma}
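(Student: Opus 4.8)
The plan is to reduce the statement to a computation about how $\rho(\tilde g)$ interacts with the building blocks of $O_{u_1\dots u_n}$, namely the Clifford generators $\gamma_{u_j}$ and the projector $P_\pm$. Since $O_{u_1\dots u_n} = -P_\pm(\ca A(\gamma_{u_1\dots u_n}))/2$ by Definition~\ref{d:OA}, and $\rho(\tilde g)$ is a unit in $A_\kappa$, I would first establish the conjugation formula on a single Clifford generator: by Lemma~\ref{l:rhoG}, for the odd element $\gamma_u = u\otimes\gamma$ one has $\rho(\tilde g)\gamma_u\rho(\tilde g^{-1}) = (-1)^{|\tilde g|}\,p(\tilde g)\cdot\gamma_u$, and since $\gamma\in\ca V_{\bar 1}$ is fixed by the action $a_0$ of \eqref{e:actO}, the action $p(\tilde g)\cdot$ on $\gamma_u$ reduces to $\gamma_{p(\tilde g)\cdot u}$. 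Thus $\rho(\tilde g)\gamma_u = (-1)^{|\tilde g|}\gamma_{p(\tilde g)\cdot u}\rho(\tilde g)$.

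Next I would propagate this through a product of $n$ generators. Moving $\rho(\tilde g)$ past $\gamma_{u_1}\dotsm\gamma_{u_n}$ accumulates a sign $(-1)^{|\tilde g|n}$ and replaces each $u_j$ by $p(\tilde g)\cdot u_j$, giving
\begin{equation*}
	\rho(\tilde g)\,\gamma_{u_1\dotsm u_n} = (-1)^{|\tilde g|n}\,\gamma_{(p(\tilde g)\cdot u_1)\dotsm(p(\tilde g)\cdot u_n)}\,\rho(\tilde g)\p.
\end{equation*}
Because the antisymmetrization operator $\ca A$ only permutes and recombines the indices with scalar coefficients (it is $\rO$-equivariant, being built from the $\rO$-module isomorphism $q$ of \eqref{e:quant}), it commutes with the linear relabelling $u_j\mapsto p(\tilde g)\cdot u_j$. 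Hence the same identity holds with $\gamma_{u_1\dotsm u_n}$ replaced by $\ca A(\gamma_{u_1\dotsm u_n})$.

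The remaining point is that $P_\pm$ intertwines with the $\rho(\tilde g)$-conjugation. Here I would invoke Lemma~\ref{l:Bg}: the $\fr{sl}(2)$-triple \eqref{e:osp} consists of invariants $(w\odot z)(B)$, which commute with $\rho(\widetilde G)$, so in particular $F^\pm$ commute with $\rho(\tilde g)$. Since $P_\pm$ is defined via the adjoint action \eqref{e:ad} of $F^\pm$ (namely $P_\pm(a) = a\mp[F^\mp,[F^\pm,a]]$), conjugation by $\rho(\tilde g)$ commutes with $P_\pm$: for any homogeneous $a$, one has $\rho(\tilde g)P_\pm(a)\rho(\tilde g^{-1}) = P_\pm(\rho(\tilde g)a\rho(\tilde g^{-1}))$. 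Combining this with the previous display and applying $-1/2$ yields exactly
\begin{equation*}
	\rho(\tilde g)\,O_{u_1\dotsm u_n} = (-1)^{|\tilde g|n}\,O_{(p(\tilde g)\cdot u_1)\dotsm(p(\tilde g)\cdot u_n)}\,\rho(\tilde g)\p.
\end{equation*}

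I expect the main subtlety to be the bookkeeping of signs and the precise role of the factor $(-1)^{|\tilde g||u|}$ in Lemma~\ref{l:rhoG}: one must verify that the parity of $\ca A(\gamma_{u_1\dotsm u_n})$ is $n \bmod 2$ and that this is the correct exponent to feed into the conjugation rule, so that the accumulated sign is $(-1)^{|\tilde g|n}$ rather than something depending on the internal structure of $\ca A$. The equivariance of $\ca A$ and $P_\pm$ under relabelling is conceptually clean but should be stated carefully, since $P_\pm$ a priori involves the specific elements $F^\pm$ rather than an abstract group action; the key is that $F^\pm\in\Cent_{A_\kappa}(\rho(\widetilde G))$ by Lemma~\ref{l:Bg}, which is what makes the intertwining work.
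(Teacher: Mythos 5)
Your proof takes essentially the same route as the paper's own (very terse) proof: Lemma~\ref{l:rhoG} to conjugate the Clifford generators, the $G$-equivariance of the quantization/antisymmetrization, and Lemma~\ref{l:Bg} to pull $P_\pm$ through the conjugation; your computation of the sign $(-1)^{|\tilde g|n}$ and the equivariance of $\ca A$ are fine. One step is mis-stated, however, and it is exactly at the sign bookkeeping you flag as the main subtlety. Lemma~\ref{l:Bg} asserts that $(w\odot z)(B)$ \emph{super}commutes with $\rho(\widetilde G)$; since $F^\pm$ are odd, for $\tilde g$ odd this means $F^\pm\rho(\tilde g) = -\rho(\tilde g)F^\pm$, so $F^\pm$ do \emph{not} commute with $\rho(\tilde g)$, and each single adjoint operator intertwines conjugation only up to a sign:
\begin{equation*}
	\rho(\tilde g)\,[F^\pm,a]\,\rho(\tilde g)^{-1} = (-1)^{|\tilde g|}\,[F^\pm,\ \rho(\tilde g)\,a\,\rho(\tilde g)^{-1}]\p.
\end{equation*}
Your conclusion that conjugation by $\rho(\tilde g)$ commutes with $P_\pm$ is nevertheless correct, precisely because $P_\pm$ is \emph{even} in $U(\fr{osp}(1|2))$ — quadratic in the odd root vectors — so the two signs $(-1)^{|\tilde g|}$ coming from $\ad F^\mp$ and $\ad F^\pm$ cancel. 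This evenness is the point the paper's proof explicitly invokes (``$P_\pm$ being an even element of $U(\fr{osp}(1|2))$ and thus commuting with $\rho(\tilde g)$''), and your write-up should state it in place of the claim that $F^\pm$ individually commute with $\rho(\tilde g)$; with that one-line repair the argument is complete.
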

\begin{proof}
	This follows from $P_{\pm} $ being an even element of $U(\fr{osp}(1|2)$ and thus commuting with $\rho(\tilde g)$ by
Lemma~\ref{l:Bg}, that the quantization map~\eqref{e:quant} is a $G$-module isomorphism and Lemma~\ref{l:rhoG}. 
\end{proof}


Next, we want to give an explicit expression for the elements~\eqref{e:O}. Recall that $\oO_{u}$ for $u\in V^*$ is given by~\eqref{e:Ov}. The case $n=1$ of the next result, shows that $O_u = \oO_u$ for $u\in V^*$, see also~\eqref{e:Pgammav}. 

\begin{lemma}\label{l:Pdeltagamma} 
	Let $n \in \{1,2,\dotsc,d\}$, and 
		$u_1,\dotsc,u_n\in V^*$, then
		\begin{align*}
			P_\pm (\gamma_{u_1} \dotsc \gamma_{u_n})  &= 	 (1-n)\gamma_{u_1} \dotsm \gamma_{u_n} -2 \sum_{j=1}^n \gamma_{u_1} \dotsm \oO_{u_j}\dotsm \gamma_{u_n}	
			\\
			& \quad -2\sum_{1\leq j<k \leq n}(-1)^{j+k-1}(u_j\beta(u_k) - \beta(u_j)u_k) \gamma_{u_1 \dotsm \widehat{u}_j \dotsm \widehat{u}_k\dotsm u_n}	\p.
		\end{align*}	
\end{lemma}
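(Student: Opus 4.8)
The plan is to prove the identity by induction on $n$, using the recursive structure~\eqref{e:asym2} of the anti-symmetrization operator together with the properties of $P_\pm$ collected in \cref{l:Pdelta}. The base case $n=1$ should be checked directly: here $\gamma_{u_1}$ is already anti-symmetrized, and one computes $P_\pm(\gamma_u)$ using the action $P_+(a) = a - [F^-,[F^+,a]]$ from \cref{p:osp12} with $F^\pm$ as in~\eqref{e:osp}. The expected outcome is $P_\pm(\gamma_u) = -\gamma_u - 2\oO_u$, i.e. $O_u = \oO_u$, matching the claim in the text preceding the lemma. The key input for this computation is the adjoint action of $F^\pm$ on $\gamma_u$, which by~\eqref{e:osp} and the relations~\eqref{e:comre} produces terms of the form $v_p^*$ or $v_p$ times $\gamma$'s, and the bracket $[\beta(u),v]$ that feeds into the definition of $\oO_u$ via~\eqref{e:Ov} and~\eqref{e:Ov2}.

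For the inductive step I would not apply $P_\pm$ directly to the product $\gamma_{u_1}\dotsm\gamma_{u_n}$, but rather peel off one factor. Concretely, I expect to write $P_\pm(\ca A(\gamma_{u_1\dotsm u_n}))$ using~\eqref{e:asym2} to expand $\ca A$ as an alternating sum $\tfrac1n\sum_j (-1)^{j-1}\gamma_{u_j}\,\ca A({}_{u_1\dotsm\widehat{u}_j\dotsm u_n})$, and then use the multiplicativity property of $P_\pm$ from \cref{l:Pdelta}. However, $\gamma_{u_j}$ is \emph{not} in the supercentralizer, so the clean product rule $P_\pm(ab)=P_\pm(a)P_\pm(b)$ does not apply; instead I must use the action formula $P_+(ab) = ab - F^-(F^+(a)b + (-1)^{|a|}aF^+(b))$ from the proof of \cref{l:Pdelta}, which generates precisely the cross-terms $F^\pm(\gamma_{u_j})$ acting on the remaining anti-symmetrized product. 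These cross-terms, after using~\eqref{e:Ogammas} of \cref{l:Ogammas} to move $\oO$'s around inside anti-symmetrized products, should reassemble into the three groups appearing in the statement: the scalar multiple $(1-n)$ of the full product (from the diagonal/weight contributions), the single-$\oO$ insertions $\sum_j \gamma_{u_1}\dotsm\oO_{u_j}\dotsm\gamma_{u_n}$, and the quadratic $M$-type terms $(u_j\beta(u_k)-\beta(u_j)u_k)$ with a hatted product.

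The main obstacle I anticipate is bookkeeping the signs and the combinatorial factors so that the alternating sums collapse correctly into the stated closed form. In particular, the coefficient $(1-n)$ must emerge from combining the ``identity'' contribution of $P_\pm$ with the $n-1$ contractions that reduce the degree, and the $(-1)^{j+k-1}$ signs on the quadratic terms must be tracked through both the expansion~\eqref{e:asym2} and the supercommutators in~\eqref{e:comre}. A clean way to control this is to verify the $n=2$ case explicitly first, where the quadratic term is exactly $M_{u_1u_2}=u_1\beta(u_2)-\beta(u_1)u_2$ (cf.~\eqref{e:DAMO}), and then check that the inductive passage from $n-1$ to $n$ respects all three term-types. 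I would lean on the fact, already used in \cref{l:Ogammas}, that $\oO$ and $\gamma$ satisfy the symmetric relation~\eqref{e:gammaO}, so that the placement of each $\oO_{u_j}$ inside the product is unambiguous under anti-symmetrization; the remaining work is then purely the sign-and-count verification.
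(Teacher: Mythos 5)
Your proposal has two genuine problems, one computational and one structural. First, the base case is wrong: you claim $P_\pm(\gamma_u) = -\gamma_u - 2\oO_u$, but since $O_u = -P_\pm(\gamma_u)/2$ this would give $O_u = \gamma_u/2 + \oO_u$, contradicting your own conclusion $O_u = \oO_u$ as well as the $n=1$ case of the lemma (where the coefficient $1-n$ vanishes). The correct computation, using~\eqref{e:1} of \cref{l:lemma3} with $\xi_1 = x^-$, gives $[F^-,[F^+,\gamma_u]] = \gamma_u + 2\oO_u$ and hence $P_\pm(\gamma_u) = -2\oO_u$; this is exactly what the paper records in~\eqref{e:Pgammav}.

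Second, and more seriously, your inductive step targets the wrong object. The lemma is a statement about the \emph{plain} product $\gamma_{u_1}\dotsm\gamma_{u_n}$; antisymmetrization only enters afterwards, in \cref{d:OA} and \cref{l:Oun}. Your plan — rewrite the left-hand side as $P_\pm(\ca A(\gamma_{u_1\dotsm u_n}))$, expand $\ca A$ via~\eqref{e:asym2}, and shuffle the $\oO$'s using \cref{l:Ogammas} — can at best establish the antisymmetrized identity, which is strictly weaker: the middle term $\sum_j \gamma_{u_1}\dotsm\oO_{u_j}\dotsm\gamma_{u_n}$ prescribes the exact slot of each $\oO_{u_j}$, and since~\eqref{e:gammaO} is an exchange relation rather than commutativity, this placement carries information that is destroyed once you apply $\ca A$ (indeed, \cref{l:Ogammas}, which you invoke to move $\oO$'s around, only holds inside antisymmetrized expressions). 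The paper's proof instead peels $\gamma_{u_1}$ off the plain product using precisely the super-Leibniz mechanism you identify from the proof of \cref{l:Pdelta}: the cross terms $[F^\pm,\gamma_{u_1}][F^\mp,\gamma_{u_2}\dotsm\gamma_{u_n}]$ produce, via~\eqref{e:1}, the recursion
\begin{equation*}
P_\pm(\gamma_{u_1}\dotsm\gamma_{u_n}) = P_\pm(\gamma_{u_1})\gamma_{u_2}\dotsm\gamma_{u_n} + \gamma_{u_1}P_\pm(\gamma_{u_2}\dotsm\gamma_{u_n}) - \gamma_{u_1}\dotsm\gamma_{u_n} - 2\sum_{j=2}^n(-1)^{j-2}\bigl(u_1\beta(u_j)-\beta(u_1)u_j\bigr)\gamma_{u_2}\dotsm\widehat{\gamma}_{u_j}\dotsm\gamma_{u_n}\p,
\end{equation*}
whose unwinding yields the coefficient $(1-n)$, the correctly placed $\oO$-insertions, and the signed $M$-type terms all at once. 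If you redirect your ``peel one factor'' step at the plain product (dropping $\ca A$ and \cref{l:Ogammas} entirely) and correct the base case, your plan becomes the paper's argument.
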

\begin{proof}	
		For $v\in V^*$, using~\eqref{e:Pdelta}, \eqref{e:osp}, and~\eqref{e:1}, we have
		\begin{equation}\label{e:Pgammav}
		P_\pm (\gamma_v) =  \gamma_v - [F^-,[F^+,\gamma_v ]]
		= \gamma_v - [(x^-\odot \gamma)(B),v ]
		= -2	\oO_v\p.
		\end{equation}
		Now, for $n\in \{2,\dotsc,d\}$ and 
		$u_1,\dotsc,u_n\in V$,
		we have
		\begin{align*}
			P_\pm (\gamma_{u_1} \dotsm \gamma_{u_n}) = 	\ & \gamma_{u_1} \dotsm \gamma_{u_n} - \left[ F^-,[ F^+,\gamma_{u_1} ]\gamma_{u_2}\dotsm \gamma_{u_n}-\gamma_{u_1}[F^+,\gamma_{u_2}\dotsm \gamma_{u_n} ]\right]\\
			= \ & \gamma_{u_1} \dotsm \gamma_{u_n} - [ F^-,[ F^+,\gamma_{u_1} ]]\gamma_{u_2}\dotsm \gamma_{u_n}- [F^+,\gamma_{u_1}][F^-,\gamma_{u_2}\dotsm \gamma_{u_n} ]
			\\
			& +[F^-,\gamma_{u_1}][F^+,\gamma_{u_2}\dotsm \gamma_{u_n} ]
			-\gamma_{u_1}[F^-,[F^+,\gamma_{u_2}\dotsc \gamma_{u_n} ]]\p.
		\end{align*}
		By definition~\eqref{e:Pdelta} and relation~\eqref{e:1}, this becomes 
		\begin{align*}
			P_\pm (\gamma_{u_1} \dotsm \gamma_{u_n}) =  \ & P_\pm (\gamma_{u_1})\gamma_{u_2}\dotsm \gamma_{u_n}	
			+  \gamma_{u_1}P_\pm (\gamma_{u_2}\dotsm \gamma_{u_n}) 	- \gamma_{u_1} \dotsm \gamma_{u_n} 
			\\
			& 	- 2\sum_{j=2}^n(-1)^{j-2}(u_1\beta(u_j) - \beta(u_1)u_j) \gamma_{u_2}\dotsm  \widehat{\gamma}_{u_j}\dotsm \gamma_{u_n}	\p.
		\end{align*}
		Applying this formula recursively yields the desired result.
	\end{proof}
	
	\begin{lemma}\label{l:Oun} 
	Let $n \in \{1,2,\dotsc,d\}$, and 
	$u_1,\dotsc,u_n\in V^*$, then
	\begin{align*}
&\quad 	O_{u_1\dotsm u_n}  = 	\frac{n-1}{2}  \ca A (\gamma_{u_1\dotsm u_n})
	+ n\,   \ca A (\oO_{u_1}\gamma_{u_2\dotsm u_n})
	+ \frac{n(n-1)}{2}  \ca A ( M_{u_1u_2} \gamma_{u_3\dotsm u_n})
	\\
  & =	-\frac{(n-1)(n-2)}{4} \ca A (\gamma_{u_1\dotsm u_n})
- n(n-2)  \ca A (\oO_{u_1}\gamma_{u_2\dotsm u_n}	)
 + \frac{n(n-1)}{2}  \ca A (O_{u_1u_2} \gamma_{u_3\dotsm u_n})
	\p.
	\end{align*}	
\end{lemma}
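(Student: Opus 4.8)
The plan is to compute $O_{u_1\dotsm u_n}$ directly from Definition~\ref{d:OA} by applying the anti-symmetrization operator $\ca A$ to the explicit formula for $P_\pm(\gamma_{u_1}\dotsm\gamma_{u_n})$ established in Lemma~\ref{l:Pdeltagamma}. The first observation is that $\ca A$ and $P_\pm$ commute: the operator $P_\pm$ is a fixed expression in the adjoint action of $F^\pm$ and does not depend on the labels $u_1,\dotsc,u_n$, so applying $P_\pm$ commutes with any relabeling $u_i\mapsto u_{\sigma(i)}$; since $\ca A$ is, by~\eqref{e:asym}, a signed average of such relabelings, we get $O_{u_1\dotsm u_n} = -\tfrac12 P_\pm(\ca A(\gamma_{u_1\dotsm u_n})) = -\tfrac12\ca A(P_\pm(\gamma_{u_1\dotsm u_n}))$. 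It then remains to antisymmetrize each of the three terms on the right-hand side of Lemma~\ref{l:Pdeltagamma} separately.

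The first term is immediate: $\ca A(\tfrac{n-1}{2}\gamma_{u_1\dotsm u_n}) = \tfrac{n-1}{2}\ca A(\gamma_{u_1\dotsm u_n})$. For the second term, each summand $\gamma_{u_1}\dotsm\oO_{u_j}\dotsm\gamma_{u_n}$ places $\oO$ on the label $u_j$ with the remaining labels carried by $\gamma$'s in increasing order, so its anti-symmetrization is exactly one of the equal expressions in Lemma~\ref{l:Ogammas}; hence every summand contributes $\ca A(\oO_{u_1}\gamma_{u_2\dotsm u_n})$, and the sum over $j\in\{1,\dotsc,n\}$ yields $n\,\ca A(\oO_{u_1}\gamma_{u_2\dotsm u_n})$. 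Together with the factor from $-P_\pm/2$, this reproduces the first two terms of the claimed expression.

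The main work is the third term, built from $N_{jk}\colonequals u_j\beta(u_k)-\beta(u_j)u_k$. Splitting $N_{jk}$ into its symmetric and antisymmetric parts under $u_j\leftrightarrow u_k$, one checks using~\eqref{e:DAMO} that the antisymmetric part equals $M_{u_ju_k}$, while the symmetric part equals $[u_j,\beta(u_k)]$, which by Lemma~\ref{l:Buv} is symmetric in $j,k$. For fixed positions $j<k$, the summand $[u_j,\beta(u_k)]\,\gamma_{u_1\dotsm\widehat{u}_j\dotsm\widehat{u}_k\dotsm u_n}$ is then invariant under the transposition of the arguments $u_j$ and $u_k$ (the omitted Clifford factor involves neither label), so its full anti-symmetrization vanishes. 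Only the $M_{u_ju_k}$ part survives, and for each pair the permutation bringing $u_j,u_k$ to the front has sign $(-1)^{j+k-1}$, which cancels the prefactor $(-1)^{j+k-1}$ in Lemma~\ref{l:Pdeltagamma}; the $\binom{n}{2}$ pairs thus collapse to $\tfrac{n(n-1)}{2}\ca A(M_{u_1u_2}\gamma_{u_3\dotsm u_n})$. This establishes the first displayed expression for $O_{u_1\dotsm u_n}$.

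Finally, the second expression follows from the first by eliminating $M_{u_1u_2}$. The $n=2$ instance of the first expression reads $O_{u_1u_2} = \tfrac12\ca A(\gamma_{u_1u_2}) + 2\,\ca A(\oO_{u_1}\gamma_{u_2}) + M_{u_1u_2}$ (using $\ca A(M_{u_1u_2})=M_{u_1u_2}$), which we solve for $M_{u_1u_2}$ and substitute. Here I use the nesting identity $\ca A(\ca A(X)\,\gamma_{u_3\dotsm u_n}) = \ca A(X\,\gamma_{u_3\dotsm u_n})$ for $X\in\{\gamma_{u_1u_2},\,\oO_{u_1}\gamma_{u_2}\}$, which holds because an inner transposition of two of the antisymmetrized arguments only changes the overall sign. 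Substituting and collecting the coefficients of $\ca A(\gamma_{u_1\dotsm u_n})$, $\ca A(\oO_{u_1}\gamma_{u_2\dotsm u_n})$ and $\ca A(O_{u_1u_2}\gamma_{u_3\dotsm u_n})$ gives $-\tfrac{(n-1)(n-2)}{4}$, $-n(n-2)$ and $\tfrac{n(n-1)}{2}$ respectively, as claimed. The main obstacle is the sign bookkeeping in the third term, together with verifying that the symmetric $[u_j,\beta(u_k)]$ contribution is annihilated by $\ca A$.
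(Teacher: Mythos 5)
Your proof is correct and takes essentially the same route as the paper's: antisymmetrize the formula of Lemma~\ref{l:Pdeltagamma} multiplied by $-1/2$ (with the $P_\pm$--$\ca A$ commutation left implicit in the paper), use Lemma~\ref{l:Ogammas} for the $\oO$-terms and Lemma~\ref{l:Buv} (via~\eqref{e:combuv}) to reduce $\ca A(u_j\beta(u_k)-\beta(u_j)u_k)$ to $M(u_j,u_k)$, then substitute the $n=2$ instance~\eqref{e:Ouv2} for $M_{u_1u_2}$ to obtain the second expression. Your additional details --- the explicit commutation of $P_\pm$ with $\ca A$, the vanishing of the symmetric part $[u_j,\beta(u_k)]$ under $\ca A$, the nesting identity, and the sign bookkeeping, all of which check out --- simply spell out steps the paper leaves implicit.
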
	

Note that the antisymmetrization in these expressions expands to
\begin{align*}
	O_{u_1\dotsm u_n} 
	&= 	\frac{n-1}{2}  \ca A (\gamma_{u_1\dotsm u_n})
	+ \sum_{j=1}^n (-1)^{j-1} O_{u_j}   \ca A (\gamma_{u_1 \dotsm \widehat{u}_j\dotsm u_n})\nonumber		\\*	
	& \quad +\sum_{1\leq j<k \leq n}(-1)^{j+k-1}M(u_j,u_k)    \ca A (\gamma_{u_1 \dotsm \widehat{u}_j \dotsm \widehat{u}_k\dotsm u_n}) \nonumber
	\\	
	& 	=-\frac{(n-1)(n-2)}{4}  \ca A (\gamma_{u_1\dotsm u_n})
	- (n-2)\sum_{j=1}^n (-1)^{j-1} O_{u_j}   \ca A (\gamma_{u_1 \dotsm \widehat{u}_j\dotsm u_n})		\\*	
	& \quad +\sum_{1\leq j<k \leq n}(-1)^{j+k-1}O_{u_ju_k}  \ca A ( \gamma_{u_1 \dotsm \widehat{u}_j \dotsm \widehat{u}_k\dotsm u_n})
	\p.
\end{align*}	

	\begin{proof}
	The first expression follows by antisymmetrizing the result of Lemma~\ref{l:Pdeltagamma} (multiplied by $-1/2$), using 
	 Lemma~\ref{l:Ogammas} and noting that, by~\eqref{e:combuv},
\[
\ca A (u_j\beta(u_k) - \beta(u_j)u_k)  = (u_j\beta(u_k) - \beta(u_j)u_k - u_k\beta(u_j) + \beta(u_k)u_j)/2 =  M (u_j,u_k) \p.
		\]
		
	For $u,v\in V^*$, the first expression gives
	\begin{equation}\label{e:Ouv2}
		O_{uv} =   \ca A (\gamma_{uv})/2 + 2  \ca A (O_{u}\gamma_{v} )+ M_{uv}  \p.
	\end{equation}
For general $u_1,\dotsc,u_n\in V^*$, we can use~\eqref{e:Ouv2} 
to replace $M(u_j,u_k)$ in the lemma's first expression for $O_{u_1\dotsm u_n} $ to find the second.
\end{proof}

\begin{lemma}\label{l:O2gammas} 
	Let $n \in \{2,\dotsc,d\}$, and 
	$u_1,\dotsc,u_n\in V^*$, then
	\begin{equation}\label{e:O2gammas}
	\ca A (O_{u_1u_2}	\gamma_{u_3 \dotsm u_n})
		= \ca A (\gamma_{u_1} O_{u_2u_3}	\gamma_{u_4 \dotsm u_n})
		= \dotsb 
		= 	\ca A (\gamma_{u_1 \dotsm u_{n-2}}O_{u_{n-1}u_n})\p.
	\end{equation}
\end{lemma}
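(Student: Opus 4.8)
The plan is to reduce \eqref{e:O2gammas} to the shift property already proved in Lemma~\ref{l:Ogammas} together with a simpler, analogous statement for the angular momentum elements $M_{uv}$. The starting point is the expansion \eqref{e:Ouv2}, which writes the two-index block as
\[
O_{u_1u_2} = \tfrac12\ca A(\gamma_{u_1u_2}) + 2\ca A(O_{u_1}\gamma_{u_2}) + M_{u_1u_2}\p.
\]
Substituting this into a single term $\ca A(\gamma_{u_1\dotsm u_{j-1}}O_{u_ju_{j+1}}\gamma_{u_{j+2}\dotsm u_n})$ of the chain, and using that full antisymmetrization over all $n$ indices absorbs the inner partial antisymmetrization over the two indices of the block (as $\ca A\ca A = \ca A$), each term splits into three pieces: a bare Clifford piece $\tfrac12\ca A(\gamma_{u_1\dotsm u_n})$, an $O_{u}$-piece $2\ca A(\gamma_{u_1\dotsm u_{j-1}}O_{u_j}\gamma_{u_{j+1}\dotsm u_n})$, and an $M$-piece $\ca A(\gamma_{u_1\dotsm u_{j-1}}M_{u_ju_{j+1}}\gamma_{u_{j+2}\dotsm u_n})$. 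It then suffices to show that each of these three pieces is independent of the position $j$, and then to sum them back up to obtain all the equalities in \eqref{e:O2gammas} simultaneously.

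The first two pieces require no new work. The bare Clifford piece is literally $\tfrac12\ca A(\gamma_{u_1\dotsm u_n})$, manifestly independent of $j$. For the second piece, the $n=1$ case of Definition~\ref{d:OA} gives $O_u = \oO_u$, so this is $2\ca A(\gamma_{u_1\dotsm u_{j-1}}\oO_{u_j}\gamma_{u_{j+1}\dotsm u_n})$, whose $j$-independence is exactly the content of Lemma~\ref{l:Ogammas}.

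The only new ingredient is the $M$-piece, for which I would prove the analogue of Lemma~\ref{l:Ogammas}: the quantity $\ca A(\gamma_{u_1\dotsm u_{j-1}}M_{u_ju_{j+1}}\gamma_{u_{j+2}\dotsm u_n})$ does not depend on $j$. Here the essential simplification compared with Lemma~\ref{l:Ogammas} is that $M_{uv} = u\beta(v) - v\beta(u)$ is an \emph{even} element lying entirely in $H_\kappa$, hence central with respect to $\ca C$ in the graded tensor product $A_\kappa \cong H_\kappa\otimes\ca C$, so it commutes with every Clifford generator $\gamma_w$. Consequently, to pass from the block at slots $(j,j+1)$ to the block at slots $(j{+}1,j{+}2)$, I would move the $M$-block to the front of the product at no sign cost, after which the two antisymmetrized expressions agree up to precomposing the $n$-variable multilinear map with the $3$-cycle on $\{j,j{+}1,j{+}2\}$ that fixes all other slots. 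Reindexing the sum in \eqref{e:asym} by $s\mapsto s\tau$ (exactly as in the proof of Lemma~\ref{l:Ogammas}) then gives the claimed equality, with no appeal whatsoever to the commutation relation \eqref{e:gammaO}; iterating over consecutive positions yields $j$-independence.

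I expect the main obstacle to be the sign bookkeeping in this last step: one must verify that the permutation relating the two index arrangements of the $M$-block is even. This is precisely where the present case is easier than Lemma~\ref{l:Ogammas} — there $\oO$ is odd and does not commute with $\gamma$, forcing the use of \eqref{e:gammaO}, whereas here the transposition is replaced by a $3$-cycle, which is even. Once this parity check is made, the argument is purely combinatorial, and the entire algebraic content is carried by the decomposition \eqref{e:Ouv2} and the two previously established shift lemmas.
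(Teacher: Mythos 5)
Your proposal is correct and follows essentially the same route as the paper: the paper's proof likewise expands the two-index block via \eqref{e:Ouv2}, invokes the established shift machinery for the $\oO$-pieces (Lemma~\ref{l:Ogammas}, via Lemma~\ref{l:Pdeltagamma}), and disposes of the $M$-piece using exactly the fact that $M(u_j,u_k)$, being an even element of $H_\kappa$, commutes with all the Clifford factors. Your explicit $3$-cycle parity check for the $M$-piece is a correct spelling-out of the sign bookkeeping that the paper leaves implicit.
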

\begin{proof} 
	This follows by means of~\eqref{e:Ouv2} and then using  Lemma~\ref{l:Pdeltagamma} and the fact that 
	$M(u_j,u_k)$ commutes with all factors of $  \ca A (\gamma_{u_1 \dotsm \widehat{u}_j \dotsm \widehat{u}_k\dotsm u_n}) $. 
\end{proof}
\begin{lemma}\label{l:Oun2} 
	Let $n \in \{3,\dotsc,d\}$, and 
	$u_1,\dotsc,u_n\in V^*$, then		
	\[	
		(n-3)	O_{u_1\dotsm u_n} 	= - 4 (n-2)\ca A ( O_{u_1} O_{u_2 \dotsm u_n}) + 2(n-1)  \ca A (O_{u_1u_2} O_{u_3\dotsm u_n})
			\p.
	\]
\end{lemma}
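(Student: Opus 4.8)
The plan is to avoid expanding both sides into a common normal form, and instead to apply the projection $P_\pm$ and exploit its multiplicativity on the $\mathfrak{osp}(1|2)$-centralizer from \cref{l:Pdelta}. Write $\mathcal O = O_{u_1\dotsm u_n}$, $G = \ca A(\gamma_{u_1\dotsm u_n})$, $\Theta = \ca A(\oO_{u_1}\gamma_{u_2\dotsm u_n})$, $\Xi = \ca A(O_{u_1u_2}\gamma_{u_3\dotsm u_n})$ and $\mathcal M = \ca A(M_{u_1u_2}\gamma_{u_3\dotsm u_n})$, and abbreviate the two terms on the right-hand side as $T_1 = \ca A(O_{u_1}O_{u_2\dotsm u_n})$ and $T_2 = \ca A(O_{u_1u_2}O_{u_3\dotsm u_n})$. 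Two general facts about $P_\pm$ are used repeatedly: being built from the fixed elements $F^\pm$, it commutes with the index antisymmetrizer $\ca A$, and a full antisymmetrization $\ca A$ absorbs any partial antisymmetrization over a subset of the indices.

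First I would record how $P_\pm$ acts on the basic pieces. By \cref{d:OA} one has $P_\pm(G) = -2\mathcal O$, while $P_\pm(\mathcal O) = \mathcal O$ since $\mathcal O\in\Cent_{A_\kappa}(\mathfrak{osp}(1|2))$ (\cref{l:Pdelta}). The crucial inputs are $T_1 = -\tfrac12 P_\pm(\Theta)$ and $T_2 = -\tfrac12 P_\pm(\Xi)$: because $O_{u_1}$ and $O_{u_1u_2}$ are central, the multiplicativity in \cref{l:Pdelta} combined with $O_{u_2\dotsm u_n} = -\tfrac12 P_\pm(\ca A(\gamma_{u_2\dotsm u_n}))$ and $O_{u_3\dotsm u_n} = -\tfrac12 P_\pm(\ca A(\gamma_{u_3\dotsm u_n}))$ gives $O_{u_1}O_{u_2\dotsm u_n} = -\tfrac12 P_\pm(O_{u_1}\ca A(\gamma_{u_2\dotsm u_n}))$ together with its $O_{u_1u_2}$-analogue; antisymmetrizing and using that $P_\pm$ commutes with and absorbs $\ca A$ then produces the two stated relations.

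Next I would eliminate $\mathcal M$. Substituting $M_{u_1u_2} = O_{u_1u_2} - \tfrac12\ca A(\gamma_{u_1u_2}) - 2\ca A(O_{u_1}\gamma_{u_2})$ from~\eqref{e:Ouv2} and absorbing inner antisymmetrizers gives $\mathcal M = \Xi - \tfrac12 G - 2\Theta$. The decisive step is then to compute $P_\pm(\mathcal M)$ in two independent ways. Applying $P_\pm$ to the first identity of \cref{l:Oun}, namely $\mathcal O = \tfrac{n-1}{2}G + n\Theta + \tfrac{n(n-1)}{2}\mathcal M$, and inserting $P_\pm(G) = -2\mathcal O$, $P_\pm(\Theta) = -2T_1$ and $P_\pm(\mathcal O) = \mathcal O$, solves for $P_\pm(\mathcal M) = \tfrac{2}{n-1}(\mathcal O + 2T_1)$. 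Applying $P_\pm$ instead to $\mathcal M = \Xi - \tfrac12 G - 2\Theta$ and inserting $P_\pm(\Xi) = -2T_2$ gives $P_\pm(\mathcal M) = \mathcal O + 4T_1 - 2T_2$. Equating the two and clearing the factor $n-1$ rearranges precisely to $(n-3)\mathcal O = -4(n-2)T_1 + 2(n-1)T_2$, which is the claim.

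The main obstacle is that $M_{u_1u_2}$ centralizes only the even subalgebra $\mathfrak{sl}(2)$ and not all of $\mathfrak{osp}(1|2)$, so $P_\pm(\mathcal M)$ cannot be evaluated directly by multiplicativity; the trick that unlocks the proof is to pin down $P_\pm(\mathcal M)$ indirectly by forcing the two expressions coming from \cref{l:Oun} and~\eqref{e:Ouv2} to agree. The remaining care is purely bookkeeping, namely verifying that $P_\pm$ is index-blind, so that it commutes with $\ca A$ and absorbs the partial antisymmetrizations, which is what lets every auxiliary element be handled under one outer $\ca A$. The hypothesis $n\geq 3$ enters only to guarantee $n-1\neq 0$ when clearing the denominator, the degenerate case $n=3$ reducing to the identity $T_1 = T_2$.
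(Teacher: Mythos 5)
Your proposal is correct and is essentially the paper's own argument: the paper simply applies $-P_\pm/2$ to the second expression of \cref{l:Oun} --- which is exactly the first expression with $\ca A(M_{u_1u_2}\gamma_{u_3\dotsm u_n})$ already eliminated via~\eqref{e:Ouv2} --- and invokes \cref{l:Pdelta}, so your two evaluations of $P_\pm(\mathcal{M})$ merely repackage that single substitute-then-project computation (equating them reproduces the projection of the second expression verbatim). The only slip is your closing remark: $n-1\neq 0$ already holds for $n=2$, where your derivation instead degenerates to a tautology (consistent with~\eqref{e:Pomegauv}), so $n\geq 3$ is really needed for the term $\ca A(O_{u_1u_2}O_{u_3\dotsm u_n})$ to carry content.
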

Note that expanding the antisymmetrization gives the following expressions
\begin{align*}
	\frac{n(n-3)}{4}	O_{u_1\dotsm u_n} &= - (n-2)\sum_{j=1}^n (-1)^{j-1} O_{u_j} O_{u_1 \dotsm \widehat{u}_j\dotsm u_n}		 \\ &\quad +\sum_{1\leq j<k \leq n}(-1)^{j+k-1}O_{u_ju_k}  O_{u_1 \dotsm \widehat{u}_j \dotsm \widehat{u}_k\dotsm u_n}\p.
\end{align*}
\begin{proof} 	
	The result follows by applying $-P_\pm/2 $ to the second expression of Lemma~\ref{l:Oun} and using Lemma~\ref{l:Pdelta}.
	\end{proof}
	
The previous result shows that all elements of the form~\eqref{e:O} can be written in terms of those having one, two or three indices.

In particular, for $u,v\in V^*$, by~\eqref{e:Auv}, \eqref{e:Ouv2} becomes
	\begin{equation}\label{e:Ouv}
		\begin{aligned}[]
			O_{uv} &=  u\beta(v) - \beta(u)v + (\gamma_u\gamma_v+B(u,v))/2 + \oO_u\gamma_v + \gamma_u \oO_v\\
			&=   u\beta(v) - v\beta(u) + (\gamma_u\gamma_v-B(u,v))/2 + \oO_u\gamma_v - \oO_v\gamma_u
	\p.
		\end{aligned}
	\end{equation}
Note also that 
	\begin{equation}
	\label{e:POuv}
	- P_\delta (\gamma_u\gamma_v) /2= O_{uv} -  B(u,v)/2\p.
\end{equation}
	For $u,v,w\in V^*$,  by~\eqref{e:Auvw} we have 
	\begin{equation}\label{e:Ouvw}
		\begin{aligned}[]
			O_{uvw} =\ &    \ca A (\gamma_{uvw})+  M(v,w)\gamma_u  -M(u,w)\gamma_v +  M(u,v)\gamma_w \\ & 
			 +  \oO_u  \ca A (\gamma_{vw})  - \oO_v  \ca A (\gamma_{uw})   + \oO_w  \ca A (\gamma_{uv}) 
			\p.
		\end{aligned}
	\end{equation}
	
		
		\begin{propo}\label{p:ospcent}
			For $\mathfrak{osp}(1|2)$ realized in $A_\kappa = H_\kappa \otimes \ca C$ by the elements~\eqref{e:osp},
			its supercentralizer $\Cent_{A_\kappa}(\mathfrak{osp}(1|2))$ is generated by $\rho(\widetilde G)$  and the elements $O_{uv}$ and $O_{uvw}$ for $u,v,w\in V^*$.
		\end{propo}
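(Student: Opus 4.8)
The plan is to prove the two inclusions separately; the forward one is immediate and the reverse one carries the bulk of the work. Write $\mathcal{O}$ for the subalgebra of $A_\kappa$ generated by $\rho(\widetilde G)$ together with all $O_{uv}$ and $O_{uvw}$. Since the supercommutator satisfies a graded Leibniz rule, $\Cent_{A_\kappa}(\mathfrak{osp}(1|2))$ is a subalgebra, so for $\mathcal{O}\subseteq\Cent_{A_\kappa}(\mathfrak{osp}(1|2))$ it suffices to check the generators. The elements $O_{uv}$ and $O_{uvw}$ lie in the supercentralizer because, by~\eqref{e:O}, they are $-P_\pm/2$ applied to elements $\ca A(\gamma_{\dotsm})\in\ca C\subseteq\Cent_{A_\kappa}(\mathfrak{osp}(1|2)_{\bar0})$, and $P_+$ maps this centralizer into $\Cent_{A_\kappa}(\mathfrak{osp}(1|2))$ by Proposition~\ref{p:osp12}. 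Meanwhile $\rho(\widetilde G)$ supercommutes with every $(w\odot z)(B)$ by Lemma~\ref{l:Bg}, hence with all of $\mathfrak{osp}(1|2)$, since its generators~\eqref{e:osp} are of this form.

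For the reverse inclusion I would start from Proposition~\ref{p:osp12}, which gives $\Cent_{A_\kappa}(\mathfrak{osp}(1|2)) = P_+\bigl(\Cent_{A_\kappa}(\mathfrak{osp}(1|2)_{\bar0})\bigr)$. The even subalgebra $\mathfrak{sl}(2)$ sits inside the purely even factor $H_\kappa$ and acts on $A_\kappa = H_\kappa\otimes\ca C$ only through the first tensor slot, so $\Cent_{A_\kappa}(\mathfrak{sl}(2)) = \Cent_{H_\kappa}(\mathfrak{sl}(2))\otimes\ca C$; by~\eqref{e:Centsl2} this is spanned by monomials $M_{a_1b_1}\dotsm M_{a_kb_k}\,g\,\gamma_{c_1}\dotsm\gamma_{c_m}$ with $g\in G$. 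It then remains to show that $P_+$ sends each such monomial into $\mathcal{O}$. The first step is to absorb the group element: for any $\tilde g\in\widetilde G$ over $g$, relation~\eqref{e:rho} gives $g = \rho(\tilde g)\,\tilde g^{-1}$ with $\tilde g^{-1}\in\ca C$, and Lemma~\ref{l:rhoG} lets me pull $\rho(\tilde g)$ to the far left at the cost of relabelling the $M$-factors by $g^{-1}$. As $\rho(\tilde g)$ is central, the product rule of Lemma~\ref{l:Pdelta} yields $P_+(\rho(\tilde g)\,Y) = \rho(\tilde g)\,P_+(Y)$, reducing the problem to monomials $Y$ in the $M_{uv}$ and the $\gamma_u$ alone.

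The heart of the argument is an induction on the number $k$ of $M$-factors, uniform in the number of $\gamma$-factors. In the base case $k=0$ I expand $\gamma_{c_1}\dotsm\gamma_{c_m}$ via the Clifford relations~\eqref{e:Clifcom} into a $B$-scalar combination of antisymmetrized products $\ca A(\gamma_{\dotsm})$; applying $P_+$ termwise produces, by~\eqref{e:O}, the elements $-2O_{\dotsm}$, and Lemma~\ref{l:Oun2} rewrites every $O$ with four or more indices as a product of $O$'s with at most three indices, all of which are generators of $\mathcal{O}$ (recall $O_u = \oO_u\in\rho(\widetilde G)$). For the inductive step I substitute the single factor $M_{a_1b_1} = O_{a_1b_1} + R$, where by~\eqref{e:Ouv} the remainder $R = -\tfrac12(\gamma_{a_1}\gamma_{b_1}-B(a_1,b_1)) - O_{a_1}\gamma_{b_1} + O_{b_1}\gamma_{a_1}$ carries no $M$-factor. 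Writing $Z$ for the $(k-1)$-factor tail, the term $O_{a_1b_1}Z$ is handled by Lemma~\ref{l:Pdelta} (as $O_{a_1b_1}$ is central) followed by the induction hypothesis on $Z$; each summand of $RZ$ is treated the same way after moving its even Clifford prefactor, respectively the central $O_{a_1},O_{b_1}\in\rho(\widetilde G)$, through the remaining even $M$-factors, the residual products again carrying only $k-1$ factors $M$.

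The main obstacle is the bookkeeping in this induction rather than any conceptual difficulty: one must check that the remainder $R$ genuinely drops the $M$-degree and that every piece arising from $RZ$ can be brought into the shape of a central generator times a monomial of strictly lower $M$-degree, so that Lemma~\ref{l:Pdelta} applies cleanly, and that the base-case Clifford expansion is controlled for arbitrarily many $\gamma$'s. Granting this, every spanning monomial maps into $\mathcal{O}$ under $P_+$, whence $\Cent_{A_\kappa}(\mathfrak{osp}(1|2)) = P_+(\Cent_{A_\kappa}(\mathfrak{sl}(2)))\subseteq\mathcal{O}$, which together with the forward inclusion completes the proof.
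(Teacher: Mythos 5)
Your proposal is correct and takes essentially the same route as the paper's own proof: reduction via Proposition~\ref{p:osp12} and the centralizer description~\eqref{e:Centsl2}, induction on the number of $M$-factors using the substitution $M_{uv}=O_{uv}-(\gamma_{uv}-B(u,v))/2-\oO_u\gamma_v+\oO_v\gamma_u$ from~\eqref{e:Ouv} together with Lemma~\ref{l:Pdelta}, absorption of the group element into a Clifford factor times an element of $\rho(\widetilde G)$, and Lemma~\ref{l:Oun2} to reduce all $O_{u_1\dotsm u_n}$ to those with at most three indices. The only differences are cosmetic, e.g.\ you pull $\rho(\tilde g)$ to the far left at the outset, whereas the paper extracts it on the right at the end.
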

		\begin{proof}
			
			By Proposition~\ref{p:osp12}, we obtain the centralizer of $\mathfrak{osp}(1|2)$ by applying the operator $P_\pm$ to $\Cent_{A_\kappa}(\mathfrak{osp}(1|2)_{\bar 0})$. 			
			In $H_\kappa$, the centralizer $\Cent_{H_\kappa}(\mathfrak{osp}(1|2)_{\bar 0})$ of the even subalgebra $\mathfrak{osp}(1|2)_{\bar 0} \cong \fr{sl}(2)$ is  generated by $\{M_{uv}\mid u,v \in V^*\}$ and the group $G$, see~\ref{e:Centsl2}.
			As $H_\kappa$ does not interact with the Clifford algebra part of $A_\kappa$, we have 
			\begin{equation}\label{e:Centsl2c}
				\Cent_{A_\kappa}(\mathfrak{osp}(1|2)_{\bar 0}) =  \Cent_{H_\kappa}(\mathfrak{osp}(1|2)_{\bar 0})  \otimes  \ca C\p.
			\end{equation}
			
			First, we note that	this means the elements $O_{u_1\dotsm u_n}$ for $u_1,\dotsc,u_n\in V^*$ are in $\Cent_{A_\kappa}(\mathfrak{osp}(1|2))$ by Definition~\ref{d:OA}.
Also, $\rho(\bC[\widetilde G]) \subset 	\Cent_{A_\kappa}(\mathfrak{osp}(1|2))$ by Lemma~\ref{l:Bg}.

			Now, a general element of $\Cent_{A_\kappa}(\mathfrak{osp}(1|2)_{\bar 0})$ can be written as a sum of terms of the form
			$
			m\,c\, g
			$
			where $m$ is a product of elements of $\{M_{uv}\mid u,v \in V^*\}$, $c \in \ca C$ and $g\in G$. 		
	Assume $m$ is a non-zero product of at least one element of $\{M_{uv} \mid u,v \in V^*\}$.  Then, there are $u,v\in V^*$ such that $m = M_{uv} m'$ with $m'$  a product of elements of $\{M_{uv} \mid u,v \in V^*\}$, one fewer  than $m$.
			
			By~\eqref{e:Ouv} and using Lemma~\ref{l:Pdelta}, we can write 
			\begin{align*}
				P_\pm(m\,c\, g) =\ &	P_\pm(M_{uv}m'\,c\, g)\\
				=\ &	P_\pm((O_{uv} -(\gamma_{uv}-B(u,v))/2 - \oO_u\gamma_v + \oO_v\gamma_u )m'\,c\, g) \\
				=\ &	(O_{uv}+B(u,v)/2)P_\pm(m'\,c\, g) -P_\pm(m'\,\gamma_{uv} c\, g)/2\\ 
				&- \oO_uP_\pm(m'\,\gamma_v c\, g)+ \oO_vP_\pm( m'\,\gamma_uc\, g)\p,
			\end{align*}
		since $O_{uv},\oO_u,\oO_v$ are in $	\Cent_{A_\kappa}(\mathfrak{osp}(1|2))$.  
			Hence, by proving the property for the case where $m$ is a constant, the result follows by induction. 
			
			Note that
			for the case $m\,c\, g = M(u,v)$ we find in this way
			\begin{equation}\label{e:Pomegauv}
					P_\pm(M_{uv} )
					=
					2 O_{uv}+2 \oO_u\oO_v-2 \oO_v\oO_u\p.
			\end{equation}
			
	An element $g\in G$ can be written as a product of reflections in $\ca S$. For each $s\in \ca S$, since $\gamma_{\alpha_s}^2 = B(\alpha_s,\alpha_s) \neq 0$ in $\ca C$,  we can write 
	\[
	s =\gamma_{\alpha_s}^2 s /B(\alpha_s,\alpha_s) = \gamma_{\alpha_s}\rho(\tilde s) /\sqrt{B(\alpha_s,\alpha_s)}\p.
	\]
	In this way, we can write $g = c_g\, \tilde g$ where $c_g\in \ca C$ and $\tilde g\in \tilde G$. 	
	By Lemma~\ref{l:Pdelta}, we have 
	\[
	P_\pm( c\,g) = P_\pm( c\,c_g \,\tilde g) = P_\pm( c\,c_g )\,\tilde g\p.
	\]
	In particular, for $s\in\ca S$ this becomes
	\begin{equation}\label{e:Ps}
		P_\pm(s) = -2 \,\oO_{\alpha_s} \rho(\tilde s) /\sqrt{B(\alpha_s,\alpha_s)}  \p.
	\end{equation}		
			
The elements of $P_\pm(\Cent_{A_\kappa}(\mathfrak{osp}(1|2)_{\bar 0}))$ are thus given by products of elements in $\rho(\widetilde G)$ and elements of the form $O_{u_1\dotsm u_n}$ for $u_1,\dotsc,u_n\in V^*$ 
	Lemma~\ref{l:Oun2} 	then shows that the latter can be generated by means of those having one, two or three indices, which completes the proof.			
		\end{proof}
		
Given the interaction with 
the group $\rho(\widetilde{G})$ in Lemma~\ref{l:groupaction}, a minimal set of generators for $\Cent_{A_\kappa}(\mathfrak{osp}(1|2))$ requires only a subset of the elements  $O_{uv}$ and $O_{uvw}$ for $u,v,w\in V^*$.
		
		
		\subsection{Supercentralizer algebra relations}\label{s:rels}
		
		We can apply $P_\pm $ to both sides of an equality to determine relations for elements of the centralizer $O_{\kappa}$ in this realization. 
		We recall that throughout the $\bZ_2$-graded commutator is used $[a,b] = ab - (-1)^{|a||b|} ba$, where  $a,b$ are homogeneous elements for the $\bZ_2$-grading, and we also denote 
			\begin{equation}\label{e:ascom}
		\ascom{a}{b} = ab + (-1)^{|a||b|} ba\p.
	\end{equation}

	The following result includes a generalization of	\cite[Theorem 3.13]{DOV}.
		\begin{propo}\label{p:OujOun}
		Let $n \in \{2,\dotsc,d\}$, and 
		$u_1,\dotsc,u_n\in V^*$, then
		\begin{equation*}\label{e:Ogammas2}
	\ca A(	O_{u_1}O_{u_2\dotsm  u_{n}}	)  =	\ca A(	O_{u_1\dotsm  u_{n-1}}O_{u_{n}}	 ) 	
	\quad\text{ or }\quad
		\ca A(	\scom{O_{u_1}}{O_{u_2\dotsm  u_{n}}	} ) 
		= 0\p,
	\end{equation*}	
and
	\begin{equation*}\label{e:O2gammas2}
	\ca A(O_{u_1u_2}	O_{u_3 \dotsm u_n})
= 		\ca A(O_{u_1 \dotsm u_{n-2}}O_{u_{n-1}u_n})
	\quad\text{ or }\quad
	\ca A(	\scom{O_{u_1u_2}}{	O_{u_3 \dotsm u_n}})
	=0\p.
\end{equation*}
	\end{propo}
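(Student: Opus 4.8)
The plan is to derive both equalities from the ``sliding'' identities already proved for the $\gamma$'s, by applying the operator $-P_\pm/2$ to each of them, and then to observe that, in each line, the two displayed forms (equality of the antisymmetrized products versus vanishing of the antisymmetrized supercommutator) are merely two formulations of one statement. Concretely, I would obtain the first equality by applying $-P_\pm/2$ to \cref{l:Ogammas} and the second by applying it to \cref{l:O2gammas}, reusing the mechanism behind the proof of \cref{l:Oun2}.

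For the first equality I start from \cref{l:Ogammas}, $\ca A(\oO_{u_1}\gamma_{u_2\dotsm u_n}) = \ca A(\gamma_{u_1\dotsm u_{n-1}}\oO_{u_n})$, and apply the linear operator $-P_\pm/2$. Since $P_\pm$ is a fixed, label-independent linear operator, it commutes with $\ca A$, so I only need to track its effect termwise. On the left, $\oO_{u_1}=O_{u_1}$ lies in $\Cent_{A_\kappa}(\fr{osp}(1|2))$, so \cref{l:Pdelta} gives $P_\pm(\oO_{u_1}\gamma_{u_2\dotsm u_n}) = \oO_{u_1}P_\pm(\gamma_{u_2\dotsm u_n})$; multiplying by $-1/2$ and using that the outer $\ca A$ already contains the antisymmetrization over $u_2,\dotsc,u_n$, the factor $-P_\pm(\gamma_{u_2\dotsm u_n})/2$ becomes $O_{u_2\dotsm u_n}$, which yields $\ca A(O_{u_1}O_{u_2\dotsm u_n})$. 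The right-hand side is symmetric, now with the central factor $\oO_{u_n}=O_{u_n}$ on the right, and produces $\ca A(O_{u_1\dotsm u_{n-1}}O_{u_n})$. This is exactly the computation used for \cref{l:Oun2}, hence routine.

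The second equality is obtained the same way from \cref{l:O2gammas}: applying $-P_\pm/2$ to $\ca A(O_{u_1u_2}\gamma_{u_3\dotsm u_n}) = \ca A(\gamma_{u_1\dotsm u_{n-2}}O_{u_{n-1}u_n})$ and using that $O_{u_1u_2}$ and $O_{u_{n-1}u_n}$ are central converts the two sides into $\ca A(O_{u_1u_2}O_{u_3\dotsm u_n})$ and $\ca A(O_{u_1\dotsm u_{n-2}}O_{u_{n-1}u_n})$. To pass to the supercommutator forms I would record that $O_{u_1\dotsm u_k}$ is homogeneous of parity $\bar k$ (since $P_\pm$ preserves parity and the leading Clifford term of $\ca A(\gamma_{u_1\dotsm u_k})$ is odd or even according to $k$), and that for any permutation $\tau$ one has $\ca A(f_{u_{\tau(1)}\dotsm u_{\tau(n)}}) = \sgn(\tau)\,\ca A(f_{u_1\dotsm u_n})$. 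Applying this with the $n$-cycle (sign $(-1)^{n-1}$) gives $\ca A(O_{u_2\dotsm u_n}O_{u_1}) = (-1)^{n-1}\ca A(O_{u_1\dotsm u_{n-1}}O_{u_n})$, so that $\ca A(\scom{O_{u_1}}{O_{u_2\dotsm u_n}}) = \ca A(O_{u_1}O_{u_2\dotsm u_n}) - \ca A(O_{u_1\dotsm u_{n-1}}O_{u_n})$; likewise the double shift is an even permutation, giving $\ca A(O_{u_3\dotsm u_n}O_{u_1u_2}) = \ca A(O_{u_1\dotsm u_{n-2}}O_{u_{n-1}u_n})$ and hence $\ca A(\scom{O_{u_1u_2}}{O_{u_3\dotsm u_n}})$ equal to the difference of the two antisymmetrized products. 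This yields the equivalence of the two forms in each line.

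The only genuinely delicate point is the legitimacy of replacing $-P_\pm(\gamma_{u_2\dotsm u_n})/2$ by $O_{u_2\dotsm u_n}$ inside the outer antisymmetrizer: this rests on the factorization $\ca A = \ca A\circ\ca A'$ through a partial antisymmetrization $\ca A'$ over the complementary index set, combined with the fact that the central factor $O_{u_1}$ (respectively $O_{u_n}$, $O_{u_1u_2}$, $O_{u_{n-1}u_n}$) does not depend on those indices and so passes through $\ca A'$. Everything else --- the commutation of $P_\pm$ with $\ca A$, the centrality of the $O$'s via \cref{l:Pdelta}, and the sign bookkeeping in the relabeling identity --- is bilinear and mechanical given the preceding lemmas.
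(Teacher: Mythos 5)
Your proposal is correct and follows essentially the same route as the paper: apply $-P_\pm/2$ to the identities of \cref{l:Ogammas} and \cref{l:O2gammas}, using \cref{l:Pdelta} to pull the central factors $O_{u_1}$, $O_{u_n}$, $O_{u_1u_2}$, $O_{u_{n-1}u_n}$ through $P_\pm$, which yields the two equalities of antisymmetrized products. Your cycle-sign relabeling argument (sign $(-1)^{n-1}$ matching the parity $(-1)^{|O_{u_1}||O_{u_2\dotsm u_n}|}$, and the even double shift in the two-index case) is just a compact reformulation of the paper's explicit expansion of $\ca A$ into signed sums, and your explicit justification of replacing $-P_\pm(\gamma_{u_2\dotsm u_n})/2$ by $O_{u_2\dotsm u_n}$ via $\ca A = \ca A\circ\ca A'$ correctly fills in a step the paper leaves implicit.
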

	\begin{proof} 
		By 	Lemma~\ref{l:Ogammas}, we have $	\ca A(	O_{u_1}	\gamma_{u_2\dotsm  u_n})= 		\ca A(\gamma_{u_1\dotsm  u_{n-1}}	O_{u_n})$. 
	 The first result follows by applying $-P_\pm/2 $ and using Lemma~\ref{l:Pdelta}. To see that this implies	$\ca A(	[O_{u_1},O_{u_2\dotsm  u_{n}}	] ) = 0$, we expand the antisymmetrization 	$\ca A(	O_{u_1}O_{u_2\dotsm  u_{n}}	)  =	\ca A(	O_{u_1\dotsm  u_{n-1}}O_{u_{n}}	 )$ 	 as
		\begin{equation*}
	\frac{1}{n}	\sum_{j=1}^n (-1)^{j-1}	O_{u_j}	O_{u_1\dotsm \widehat{u}_j \dotsm u_n}
		= 	\frac{1}{n}\sum_{j=1}^n (-1)^{n-j}	O_{u_1\dotsm \widehat{u}_j \dotsm u_n}	O_{u_j}\p.
	\end{equation*}	 

	Similarly, by Lemma~\ref{l:O2gammas} we have  
	$	\ca A(O_{u_1u_2}	\gamma_{u_3 \dotsm u_n})
	= 		\ca A(\gamma_{u_1 \dotsm u_{n-2}}O_{u_{n-1}u_n})$
and again the result follows by applying $-P_\pm/2 $.
	\end{proof}
Specific cases of the previous proposition are, 	for $u,v,w,z\in V^*$, 
	\begin{align}
&	\scom[-]{ O_{uv} }{ O_{w} } -  \scom[-]{ O_{uw} }{ O_v } + \scom[-]{  O_{vw} }{ O_u }= 0\\  
& \scom[+]{O_{uvw} }{ O_z }  - \scom[+]{O_{uvz} }{ O_w } + \scom[+]{O_{uwz} }{ O_v } - \scom[+]{O_{vwz} }{ O_u } =  0\p, \label{e:OuvwOx}
\end{align}
Note also that by Proposition~\ref{p:OujOun}, one can rewrite antisymmetrized products using~\eqref{e:ascom}, for instance $\ca A (O_{u_1u_2} O_{u_3 u_4}) = \ca A (\ascom[+]{O_{u_1u_2}}{ O_{u_3 u_4}})/2$.

\begin{lemma}\label{l:O45}
For 
$u_1,\dotsc,u_5\in V^*$,
	\begin{align*}	
	O_{u_1\dotsm u_4}  
	 & = 6 \, \ca A (O_{u_1u_2} O_{u_3 u_4})- 8\, \ca A ( O_{u_1u_2 u_3 } O_{u_4})  
\end{align*}	
and
\begin{align*}	
	O_{u_1\dotsm u_5} 	&= 4  \ca A (O_{u_1u_2u_3} O_{u_4 u_5})
	+48 \ca A ( O_{u_1u_2 u_3 } O_{u_4} O_{u_5}) 
	- 36\ca A ( O_{u_1u_2} O_{u_3 u_4}O_{u_5})
\p.
\end{align*}			
\end{lemma}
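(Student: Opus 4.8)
The plan is to obtain both identities from the reduction formula of Lemma~\ref{l:Oun2}, feeding the four-index case into the five-index case and then reshaping the output with Proposition~\ref{p:OujOun}. For the four-index identity I would simply specialize Lemma~\ref{l:Oun2} to $n=4$, which gives $O_{u_1\dotsm u_4} = -8\,\ca A(O_{u_1}O_{u_2u_3u_4}) + 6\,\ca A(O_{u_1u_2}O_{u_3u_4})$, and then rewrite the first summand by the first equality of Proposition~\ref{p:OujOun} (with $n=4$), namely $\ca A(O_{u_1}O_{u_2u_3u_4}) = \ca A(O_{u_1u_2u_3}O_{u_4})$. This lands exactly on the claimed expression $6\,\ca A(O_{u_1u_2}O_{u_3u_4}) - 8\,\ca A(O_{u_1u_2u_3}O_{u_4})$.

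For the five-index identity I would first apply Lemma~\ref{l:Oun2} with $n=5$ to get $O_{u_1\dotsm u_5} = -6\,\ca A(O_{u_1}O_{u_2u_3u_4u_5}) + 4\,\ca A(O_{u_1u_2}O_{u_3u_4u_5})$, which still carries the four-index symmetry $O_{u_2u_3u_4u_5}$. I would then substitute the already established four-index identity for $O_{u_2u_3u_4u_5}$ as an equality of algebra elements, and collapse the resulting nested antisymmetrizations: the outer $\ca A$ over all five indices absorbs the inner one, exactly as in the expansion-and-slide argument used in the proof of Lemma~\ref{l:Ogammas} via~\eqref{e:asym2}. This produces $48\,\ca A(O_{u_1}O_{u_2u_3u_4}O_{u_5}) - 36\,\ca A(O_{u_1}O_{u_2u_3}O_{u_4u_5}) + 4\,\ca A(O_{u_1u_2}O_{u_3u_4u_5})$.

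It then remains to match this with the target, and the key tool is Proposition~\ref{p:OujOun} applied to sub-blocks of indices. The last summand uses the second equality of Proposition~\ref{p:OujOun} ($n=5$) to pass from $\ca A(O_{u_1u_2}O_{u_3u_4u_5})$ to $\ca A(O_{u_1u_2u_3}O_{u_4u_5})$, matching the $4$-coefficient term. For the $48$-term I would antisymmetrize the factors $O_{u_1}O_{u_2u_3u_4}$ over the sub-block of indices $u_1,u_2,u_3,u_4$ and apply the $n=4$ case of Proposition~\ref{p:OujOun} to replace them by $O_{u_1u_2u_3}O_{u_4}$, yielding $\ca A(O_{u_1u_2u_3}O_{u_4}O_{u_5})$. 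For the $-36$-term I would regroup twice, first over $u_1,u_2,u_3$ (turning $O_{u_1}O_{u_2u_3}$ into $O_{u_1u_2}O_{u_3}$) and then over $u_3,u_4,u_5$ (turning $O_{u_3}O_{u_4u_5}$ into $O_{u_3u_4}O_{u_5}$), each step being the $n=3$ case of Proposition~\ref{p:OujOun}; this converts the term to $\ca A(O_{u_1u_2}O_{u_3u_4}O_{u_5})$. Summing the three reshaped terms gives the stated five-index formula.

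The step I expect to be the main obstacle is justifying these sub-block manipulations rigorously: one must verify that antisymmetrizing over a subset of the indices and invoking Proposition~\ref{p:OujOun} there is legitimate inside the full antisymmetrization, i.e.\ that the outer $\ca A$ absorbs the partial one and that the remaining factors can be carried along without spurious signs. Since every block $O_{\dots}$ lies in $\Cent_{A_\kappa}(\fr{osp}(1|2))$ and is homogeneous (odd or even according to the parity of its number of indices), and since the sign bookkeeping is already packaged into the (anti)supercommutator form of Proposition~\ref{p:OujOun}, this should reduce to the same idempotency and sliding arguments already used for Lemmas~\ref{l:Ogammas} and~\ref{l:O2gammas}; nonetheless it is the part of the argument that requires the most care.
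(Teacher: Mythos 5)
Your proof is correct and takes essentially the same route as the paper: both specialize Lemma~\ref{l:Oun2} to $n=4$ and $n=5$, rewrite via Proposition~\ref{p:OujOun}, and feed the four-index identity into the five-index case. The only (harmless) difference is the order of operations --- the paper first converts $\ca A(O_{u_1}O_{u_2\dotsm u_5})$ into $\ca A(O_{u_1\dotsm u_4}O_{u_5})$ and then substitutes, landing directly on the target terms, whereas you substitute first and then perform the sub-block applications of Proposition~\ref{p:OujOun}, which you correctly justify through absorption of partial antisymmetrizations by the full one (the signs in~\eqref{e:asym} are ordinary permutation signs, so no parity issues arise).
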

\begin{proof}
	By Lemma~\ref{l:Oun2} and Proposition~\ref{p:OujOun}, we have
	\[	
(n-3)	O_{u_1\dotsm u_n} 	=2(n-1)  \ca A (O_{u_1u_2} O_{u_3\dotsm u_n}) - 4 (n-2)\ca A ( O_{u_1 \dotsm u_{n-1}} O_{u_n}) 
\p.
\]
For $n=4$, this gives the desired result. For $n=5$, we have
		\[	
	2	O_{u_1\dotsm u_5} 	= 8  \ca A (O_{u_1u_2} O_{u_3u_4 u_5})- 12\ca A (  O_{u_1 \dotsm u_{4}} O_{u_5}) 
	\p,
	\]	
	where we can use the $n=4$ result to obtain the desired result.
\end{proof}

\begin{propo}
	For $a,b,u,v\in V^*$, we have
				\begin{align*}
			\scom[-]{O_{ab}}{O_{uv}}	=\ & B(b,u)( O_{av} + \GG{a}{v} )
		- B(a,u)(O_{bv}+\GG{b}{v})\\ &
		- B(b,v)( O_{au} + \GG{a}{u} )
		+ B(a,v)(O_{bu}+\GG{b}{u})\\ &
		+(\scom[+]{O_a}{O_{buv}}
		- \scom[+]{O_b}{O_{auv}}   +  \scom[+]{O_{abu}}{O_v}  -\scom[+]{O_{abv}}{O_u}   )/2
		\p,
	\end{align*}
	 or, denoting 
		$\hat u = 	B(b,u)a - B(a,u)b$ and 	$\hat v = 	B(b,v)a - B(a,v)b$, 
	\begin{align*}
		\scom[-]{O_{ab}}{O_{uv}}	 &= O_{\hat u v} + \ascom[-]{O_{\hat u}}{O_v} + \scom[+]{O_{abu}}{O_v} \\ & \quad+ O_{u\hat v} + \ascom[-]{O_u}{O_{\hat v}} 				
		- \scom[+]{O_{abv}}{O_u}\p.
	\end{align*}	
\end{propo}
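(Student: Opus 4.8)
The two displayed identities are equivalent, so it suffices to prove the first. Indeed, expanding $O_{\hat u v}+\ascom[-]{O_{\hat u}}{O_v}$ and $O_{u\hat v}+\ascom[-]{O_u}{O_{\hat v}}$ by multilinearity of the $O$'s (and using $\ascom[-]{O_u}{O_a}=-\GG{a}{u}$) reproduces the four terms $B(\cdot,\cdot)\big(O_{\cdot\cdot}+\GG{\cdot}{\cdot}\big)$ of the first form, while \eqref{e:OuvwOx} rewrites $\scom[+]{O_{abu}}{O_v}-\scom[+]{O_{abv}}{O_u}$ as $\tfrac12\big(\scom[+]{O_a}{O_{buv}}-\scom[+]{O_b}{O_{auv}}+\scom[+]{O_{abu}}{O_v}-\scom[+]{O_{abv}}{O_u}\big)$. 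The engine of the proof is the projector identity $O_{ab}=-\tfrac12P_\pm(\ca A(\gamma_{ab}))$ of Definition~\ref{d:OA}: since $O_{ab},O_{uv}\in\Cent_{A_\kappa}(\fr{osp}(1|2))$, Lemma~\ref{l:Pdelta} gives $O_{ab}O_{uv}=P_\pm\big({-\tfrac12}\ca A(\gamma_{ab})\,O_{uv}\big)$ and $O_{uv}O_{ab}=P_\pm\big(O_{uv}\,({-\tfrac12}\ca A(\gamma_{ab}))\big)$. Subtracting and using linearity of $P_\pm$ yields the key reduction
\[
[O_{ab},O_{uv}]=-\tfrac12\,P_\pm\big([\gamma_a\gamma_b,O_{uv}]\big),
\]
which replaces a commutator of two projected elements by the projection of a single explicit Clifford commutator.

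Next I would evaluate $[\gamma_a\gamma_b,O_{uv}]$ from the explicit form \eqref{e:Ouv}. The angular-momentum summand $M_{uv}\in H_\kappa$ commutes with $\gamma_a\gamma_b\in\ca C$; the Clifford summand is handled by the $\fr{so}$-action \eqref{e:adso}; and for the single-index factors one computes, using \eqref{e:adso} and \eqref{e:Ov} together with the identification $\psi_\kappa^B(u,b)=\scom[+]{\gamma_u}{O_b}$ coming from \eqref{e:Ogamma}--\eqref{e:combuv}, that $[\gamma_a\gamma_b,O_u]=\scom[+]{\gamma_u}{O_b}\gamma_a-\scom[+]{\gamma_u}{O_a}\gamma_b$. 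Collecting, $[\gamma_a\gamma_b,O_{uv}]$ separates into an \emph{orbital} part (products of $\gamma$'s, and of the centralizer elements $O_u$ with a single $\gamma$) and a \emph{reflection} part carrying the group-algebra factors $\psi_\kappa^B$.

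Applying $-\tfrac12P_\pm$ to the orbital part is immediate from \eqref{e:POuv}, i.e.\ $-\tfrac12P_\pm(\gamma_x\gamma_y)=O_{xy}-B(x,y)/2$, and \eqref{e:Pgammav}, i.e.\ $-\tfrac12P_\pm(\gamma_x)=O_x$, after factoring each $O_u$ out of $P_\pm$ by Lemma~\ref{l:Pdelta}; the scalar remainders cancel and one is left with the bare terms $B(b,u)O_{av}-B(a,u)O_{bv}-B(b,v)O_{au}+B(a,v)O_{bu}$ together with products of single-index $O$'s. For the reflection part I would write $s=\rho(\tilde s)\gamma_{\alpha_s}/\sqrt{B(\alpha_s,\alpha_s)}$ and evaluate the needed $P_\pm(s\gamma_x\gamma_y)$ through the three-index formula $-\tfrac12P_\pm(\gamma_{\alpha_s}\gamma_x\gamma_y)=O_{\alpha_s xy}+B(\alpha_s,x)O_y-B(\alpha_s,y)O_x+B(x,y)O_{\alpha_s}$ (from Definition~\ref{d:OA} and \eqref{e:Auvw}) and \eqref{e:Ps}; together with the group interaction of Lemma~\ref{l:groupaction}, the $\psi_\kappa^B$-weighted sums then assemble, modulo the single-index products above, into the commutators $\GG{a}{v},\dotsc$ and into the three-index anticommutators $\scom[+]{O_{abu}}{O_v}$ and $\scom[+]{O_{abv}}{O_u}$.

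The reflection bookkeeping is the step I expect to be hardest: the natural output of $P_\pm$ is $\rho(\tilde s)$ multiplied by $O_{\alpha_s xy}$ with the root $\alpha_s$ entangled in an index, and proving that the weighted sum of such terms equals $\scom[+]{O_{abu}}{O_v}-\scom[+]{O_{abv}}{O_u}$ is cleanest if one expands those anticommutators back into $M$'s, $\gamma$'s and $O_u$'s through \eqref{e:Ouvw} and matches the primitive monomials, rather than trying to close the computation within the named generators. A guiding consistency check throughout is the undeformed limit $\kappa=0$: then $\psi_\kappa^B$, every $O_u$, and hence all $\GG{a}{v}$-type commutators and all anticommutator terms vanish, and the identity must collapse to the expected orthogonal bracket $[O_{ab},O_{uv}]=B(b,u)O_{av}-B(a,u)O_{bv}-B(b,v)O_{au}+B(a,v)O_{bu}$, which fixes all the normalizations.
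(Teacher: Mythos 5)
Your reduction $[O_{ab},O_{uv}]=-\tfrac12 P_\pm\bigl([\gamma_a\gamma_b,O_{uv}]\bigr)$ is valid, and it is (up to swapping which of the two factors gets expanded) exactly the paper's engine: the paper computes $[O_{ab},\ca A(\gamma_{uv})]$ from \eqref{e:Ouv} and \eqref{e:adso} and then applies $-P_\pm/2$ using Lemma~\ref{l:Pdelta}; your verification that the two displayed forms are equivalent via \eqref{e:OuvwOx}, and the projection formulas \eqref{e:Pgammav}, \eqref{e:POuv} and the three-index formula from \eqref{e:Auvw} that you quote, all match the paper's steps. The gap is in how you treat the deformation terms. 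The moment you commute $\gamma_a\gamma_b$ past $O_u$ via $\scom[+]{\gamma_\cdot}{O_\cdot}=\psi_\kappa^B(\cdot,\cdot)$, you create group-algebra factors $\psi_\kappa^B$ whose individual reflections $s$ are \emph{not} in $\Cent_{A_\kappa}(\fr{osp}(1|2))$, so Lemma~\ref{l:Pdelta} no longer applies monomial-by-monomial; you are then forced into the reflection-by-reflection evaluation of $P_\pm(s\,\gamma_x\gamma_y)$, where after writing $s=\gamma_{\alpha_s}\rho(\tilde s)/\sqrt{B(\alpha_s,\alpha_s)}$ and conjugating $\rho(\tilde s)$ through (Lemmas~\ref{l:rhoG} and~\ref{l:groupaction}) the indices become $s$-dependent, and the $\kappa$-weighted sums must be reassembled into $\GG{a}{v}$ and the anticommutators $\scom[+]{O_{abu}}{O_v}$, $\scom[+]{O_{abv}}{O_u}$. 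You correctly flag this as the hardest step, but you only sketch a plan for it (``expand back into primitive monomials and match'') and never execute it; since this is precisely where all the $\kappa$-dependent terms of the identity are produced, the proof is incomplete at its heart.

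The missing idea, which the paper uses and which makes the whole reflection expansion unnecessary, is the flip identity \eqref{e:gammaO}: $O_u\gamma_v-O_v\gamma_u=\gamma_uO_v-\gamma_vO_u$. Writing the commutator as a left product minus a right product and flipping the $\oO$'s to the appropriate side in each, every monomial occurring in $[O_{ab},\ca A(\gamma_{uv})]$ (equivalently in your $[\gamma_a\gamma_b,O_{uv}]$) carries its single-index centralizer element at an extreme left or right position; Lemma~\ref{l:Pdelta} then pulls it out of $P_\pm$, and the inner pure Clifford cubes such as $\gamma_b\gamma_u\gamma_v$ project by \eqref{e:Auvw} directly to $O_{buv}+B(b,u)O_v-B(b,v)O_u+B(u,v)O_b$ and its companions --- no $\psi_\kappa^B$, no sum over $\ca S$, no conjugation bookkeeping. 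Your route would presumably close with enough labor (your $\kappa=0$ consistency check and all the normalizations you cite are correct), but as written it replaces a two-line step by an unfinished and avoidably hard computation.
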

\begin{proof}
	Using the definition~\eqref{e:Ouv} together with~\eqref{e:gammaO} and~\eqref{e:adso}
	\begin{align*}
	[O_{ab},\ca A(\gamma_{uv})] 
	= \ &   [ \gamma_{ab}/2 , \gamma_{uv}] 
				+[ O_a\gamma_b - O_b\gamma_a,\gamma_{uv} ] \\
	=\ &   [\gamma_a\gamma_b, \gamma_u]\gamma_v/2 
				+
				\gamma_u[\gamma_a \gamma_b, \gamma_v]/2 \\ &
				+ (O_a\gamma_b
				- O_b\gamma_a)\gamma_u\gamma_v 
				- \gamma_u \gamma_v ( \gamma_a O_b  
				-  \gamma_b O_a) \\ 
	=\ &  (B(b,u) \gamma_a
				- B(a,u)\gamma_b)\gamma_v
				+
				\gamma_u(B(b,v)\gamma_a 
				-
				B(a,v) \gamma_b) \\ &
				+ O_a\gamma_b\gamma_u \gamma_v 
				- O_b\gamma_a\gamma_u\gamma_v 
				- \gamma_u \gamma_v  \gamma_a O_b  
				+ \gamma_u \gamma_v  \gamma_b O_a \p.
			\end{align*}		
			By~\eqref{e:O} and \eqref{e:POuv}, 
		 applying $-P_\pm/2 $ to the previous computation gives
			\begin{align*}
				[O_{ab},O_{uv}] 
				=\ &  B(b,u) (O_{av} - B(a,v)/2 ) - B(a,u)(O_{bv} -B(b,v)/2)\\
				&
				+
				B(b,v)(O_{ua} -B(u,a)/2)
				-
				B(a,v) (O_{ub} -B(u,b)/2)\\ &
				+ O_a(O_{buv} +B(u,v)O_b-B(b,v)O_u+B(b,u)O_v)\\ &
				- O_b(O_{auv} +B(u,v)O_a-B(a,v)O_u+B(a,u)O_v)\\ &
				-(O_{uva} +B(v,a)O_u-B(u,a)O_v+B(u,v)O_a) O_b  \\ &
				+ (O_{uvb} +B(v,b)O_u-B(u,b)O_v+B(u,v)O_b)O_a \p.
			\end{align*}
		Collecting the appropriate terms and using~\eqref{e:OuvwOx} gives the desired results.
\end{proof}


The next two propositions contain expressions with four or five indices, which are given in Lemma~\ref{l:O45}. 
\begin{propo}\label{p:O2O34}
	Let $a,b,c,u,v,w\in V^*$, and denote $\hat x = B(b,x) a - B(a,x)b$ for $x \in \{c,u,v,w\}$. We have  
	\begin{align*}
		\scom[-]{ O_{ab} }{ O_{uvw} } &=  O_{\hat u vw} + O_{u\hat  vw}  + O_{ uv\hat w}   \\*
		& \quad + 
		\ascom[+]{ O_{\hat u}}{ O_{vw}} - 
		\ascom[+]{ O_{\hat v}}{ O_{uw}} + 
		\ascom[+]{ O_{\hat w}}{ O_{uv}}\\* 	& \quad + \scom[-]{O_a}{O_{buvw} } - \scom[-]{O_b}{ O_{auvw} }\p,
	\end{align*}
and 
	\begin{align*}
		\scom[-]{O_{ab}}{O_{cuvw}} 	&=   
		O_{\hat cuvw} + O_{c\hat uvw} + O_{cu\hat vw} + O_{cuv\hat w}\\*	
& \quad + 	\ascom[-]{O_{\hat c}}{O_{uvw}}   + 	\ascom[-]{O_{\hat u}}{O_{cvw}} + 	\ascom[-]{O_{\hat v}}{O_{cuw}} + 	\ascom[-]{O_{\hat w}}{O_{cuv}}
			\\* & \quad
		\scom[+]{O_a}{O_{bcuvw} }	 -\scom[+]{O_b}{O_{acuvw}}	
		\p.
	\end{align*}
\end{propo}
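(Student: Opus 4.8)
The plan is to reduce both identities to an explicit Clifford-level computation followed by an application of $P_\pm$, exactly as in the proof of the preceding proposition. The crucial device is that $O_{ab}$ is an \emph{even} element of $\Cent_{A_\kappa}(\fr{osp}(1|2))$, so that Lemma~\ref{l:Pdelta} gives $P_\pm(O_{ab}\,X) = O_{ab}\,P_\pm(X)$ and $P_\pm(X\,O_{ab}) = P_\pm(X)\,O_{ab}$ for every $X\in A_\kappa$. Combined with linearity of $P_\pm$ and the evenness of $O_{ab}$, this yields $P_\pm(\scom[-]{O_{ab}}{X}) = \scom[-]{O_{ab}}{P_\pm(X)}$. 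Taking $X = \ca A(\gamma_{uvw})$ and recalling from~\eqref{e:O} that $-P_\pm(\ca A(\gamma_{uvw}))/2 = O_{uvw}$, we obtain $\scom[-]{O_{ab}}{O_{uvw}} = -P_\pm(\scom[-]{O_{ab}}{\ca A(\gamma_{uvw})})/2$; the second identity follows in the same way from $X = \ca A(\gamma_{cuvw})$. Note that $\ca A(\gamma_{uvw})$ is odd while $\ca A(\gamma_{cuvw})$ is even, but in both cases the bracket against the even element $O_{ab}$ is the ordinary commutator $\scom[-]{\cdot}{\cdot}$.

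First I would compute the inner commutators $\scom[-]{O_{ab}}{\ca A(\gamma_{uvw})}$ and $\scom[-]{O_{ab}}{\ca A(\gamma_{cuvw})}$ directly in $A_\kappa$. Writing $O_{ab}$ via~\eqref{e:Ouv} as the sum of its angular-momentum part $M_{ab}\in H_\kappa$, its Clifford part $\g{a}\g{b}/2 - B(a,b)/2$, and the mixed part $\oO_a\g{b} - \oO_b\g{a}$, the commutator splits into three pieces. The term $M_{ab}$ lies in $H_\kappa$ and therefore commutes with the Clifford element $\ca A(\gamma_{\dots})$, so it drops out. The term $\g{a}\g{b}/2$ acts through the $\fr{so}(V^*,B)$ adjoint action~\eqref{e:adso}, which is a derivation and commutes with the $\rO$-equivariant antisymmetrizer $\ca A$; this produces the contraction terms $B(b,u)\g{a} - B(a,u)\g{b}$ inserted slot by slot. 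The mixed part is handled with the sliding identity~\eqref{e:gammaO}, exactly as in the $n=2$ case of the previous proposition. This step is routine but is the main source of labour, the four-index case being correspondingly longer.

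Next I would apply $-P_\pm/2$ to these expressions. Using Lemma~\ref{l:Pdelta} to pull the central factors $O_a = \oO_a$ and $O_b = \oO_b$ outside $P_\pm$, together with~\eqref{e:O} to convert each $-P_\pm(\ca A(\gamma_{\dots}))/2$ into the corresponding $O_{\dots}$ and~\eqref{e:POuv} to convert each $-P_\pm(\g{u}\g{v})/2$ into $O_{uv} - B(u,v)/2$, every summand becomes a product of the generators $O_{\dots}$ and scalars. The contraction terms assemble, after introducing $\hat x = B(b,x)a - B(a,x)b$, into $O_{\hat u vw}$, $O_{u\hat v w}$, $O_{uv\hat w}$ (and the four-index analogues containing $\hat c$), while the mixed part yields the bracket terms $\ascom{O_{\hat x}}{O_{\dots}}$ and the boundary terms $\scom{O_a}{O_{buvw}}$ and $\scom{O_b}{O_{auvw}}$; the many-index elements $O_{buvw}$ and $O_{bcuvw}$ are read off through Lemma~\ref{l:O45}.

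The final and most delicate step is to collapse the remaining quadratic-in-$O$ combinations into the compact symmetric forms displayed. Here I would invoke the syzygies of Proposition~\ref{p:OujOun}, in particular~\eqref{e:OuvwOx}, to recombine the boundary and antisymmetrized terms. The main obstacle is precisely this bookkeeping rather than any new algebraic input: one must track the signs carefully — noting that the $\ascom$-brackets acquire a $+$ sign in the first identity (where the inner string is odd) and a $-$ sign in the second, matching the parity of $O_{uvw}$ versus $O_{cuvw}$ — and apply the correct syzygy to each group of terms so that the numerous products coalesce exactly into the stated right-hand sides.
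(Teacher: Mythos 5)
Your proposal is correct and follows essentially the same route as the paper's proof: split $O_{ab}$ via~\eqref{e:Ouv} (the part $M_{ab}\in H_\kappa$ commuting with the Clifford elements and dropping out), compute $[O_{ab},\ca A (\gamma_{uvw})]$ and $[O_{ab},\ca A (\gamma_{cuvw})]$ at the Clifford level using~\eqref{e:adso} and the sliding identity~\eqref{e:gammaO}, then apply $-P_\pm/2$ with Lemma~\ref{l:Pdelta} and Definition~\ref{d:OA}. Two small corrections to your outline: when the inner commutators are organized as in the paper the terms land directly in the stated form, so the final appeal to the syzygies of Proposition~\ref{p:OujOun} is superfluous (the paper uses~\eqref{e:OuvwOx} only for the preceding two--two bracket), and the elements $O_{buvw}$, $O_{bcuvw}$ arise directly as $-P_\pm(\ca A(\gamma_{buvw}))/2$ etc.\ from~\eqref{e:O}, not ``through Lemma~\ref{l:O45}'', which merely re-expresses them in lower-index generators afterwards.
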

\begin{proof}
For the first relation, using~\eqref{e:Ouv} and~\eqref{e:gammaO}
	\[
	[O_{ab},\ca A(\gamma_{xyz}) ] =	
		[\gamma_{uv}/2,\ca A(\gamma_{xyz}) ]  + 	[O_u\gamma_v - O_v\gamma_u,\ca A(\gamma_{xyz}) ]  \p.
	\]
On the one hand, by~\eqref{e:adso} we have
\begin{align*}	
	 \scom{\gamma_{uv}/2}{\ca A(\gamma_{xyz}) } & = \gamma_{\hat x yz} + \gamma_{x\hat  yz}  + \gamma_{ x y\hat z}  \\
	 & \quad - B(y,z)\gamma_{\hat x}+ B(x,z)\gamma_{\hat y}- B(x,y)\gamma_{\hat z} \\
	 & = \ca A(\gamma_{\hat x yz} )+ \ca A(\gamma_{x\hat  yz})  + \ca A(\gamma_{ x y\hat z} ) 
\end{align*}	
where we used that 
\[
B(\hat x , z ) = B(v,x) B(u , z ) - B(u,x)B(v, z )  = - B(x,\hat z)\p.
\]
On the other hand, 
	\begin{align*}
& \quad \ \scom{O_u\gamma_v - O_v\gamma_u}{\ca A(\gamma_{xyz}) }  \\
& = (O_u\gamma_v - O_v\gamma_u)\ca A(\gamma_{xyz}) - \ca A(\gamma_{xyz}) (\gamma_uO_v - \gamma_vO_u)\\
& =\scom{O_u}{\ca A(\gamma_{vxyz}) } - \scom{O_v}{\ca A(\gamma_{uxyz}) } \\
& \quad + 
\ascom{ O_u }{ B(v,x)\ca A(\gamma_{yz}) - B(v,y)\ca A(\gamma_{xz}) + B(v,z)\ca A(\gamma_{xy}) }\\
& \quad - 
\ascom{ O_v}{ B(u,x)\ca A(\gamma_{yz}) - B(u,y)\ca A(\gamma_{xz})+ B(u,z)\ca A(\gamma_{xy}) }\\
& = 
\scom{O_u}{\ca A(\gamma_{vxyz}) } - \scom{O_v}{\ca A(\gamma_{uxyz}) } \\
& \quad + 
\ascom{ B(v,x)O_u - B(u,x)O_v}{ \ca A(\gamma_{yz})} \\
& \quad - 
\ascom{ B(v,y)O_u - B(u,y)O_v}{ \ca A(\gamma_{xz})} \\
& \quad + 
\ascom{ B(v,z)O_u - B(u,z)O_v}{ \ca A(\gamma_{xy})}\p.
	\end{align*}		
The result follows after applying  $-P_\pm/2 $.

	Similarly, for the second relation, we have 
\[
\scom[-]{O_{ab}}{\ca A ( \g{cuvw})} 	=   
[\cA{ab}/2 + O_a \g{b} - O_b\g{a} , \cA{cuvw} ]
\]
where 
\[
[\cA{ab}/2 , \cA{cuvw} ]
=	 \cA{\hat cuvw} + \cA{c\hat uvw} +\cA{cu\hat vw} +\cA{cuv\hat w} 
\p,
\]
and
\begin{align*}
	&\quad\	(O_b \g{c} - O_c\g{b})\cA{auvw} - \cA{auvw} (\g{b}O_c-\g{c} O_b  )\\
	& = \scom[+]{O_b}{ \ca A ( \g{cauvw})}
	- B(c,w) \ascom[-]{O_b}{\ca A ( \g{auv})} 	\\ & \quad
	-\scom[+]{O_c}{\ca A ( \g{bauvw} )}	-B(b,v) \ascom[-]{O_c}{\ca A ( \g{auw})}   		\p,	
\end{align*}
while
\[
\scom[-]{O_{ab}}{\ca A ( \g{cuvw})} 	=   
[\cA{ab}/2 + O_a \g{b} - O_b\g{a} , \cA{cuvw} ]
\]
where 
\[
[\cA{ab}/2  , \cA{cuvw} ]
=B(a,u)\cA{bcvw}	  +	B(b,c) \cA{auvw} +  B(b,v)\cA{acuw}	 
\p,
\]
and
\begin{align*}
	&\quad\	(O_a \g{b} - O_b\g{a})\cA{cuvw}
	- \cA{cuvw} (\gO{a}{b})\\
	& = \scom[+]{O_a}{\ca A ( \g{bcuvw} )}	-B(b,c) \ascom[-]{O_a}{\cA{uvw}} -B(b,v) \ascom[-]{O_a}{\ca A ( \g{cuw})} 
	\\ & \quad
	-\scom[+]{O_b}{ \ca A ( \g{acuvw})}
	- B(a,u) \ascom[-]{O_b}{\ca A ( \g{cvw})} 
	\p.			
\end{align*}
The results follow after applying $-P_\pm/2 $.
\end{proof}

Moreover, for brevity assuming also $B(a,c) = 0 = B(b,c)$, we have

\begin{propo}\label{p:O3O3}
	Let $a,b,c,u,v,w\in V^*$ be such that the only $B$-pairings between $\{a,b,c\}$ and $\{u,v,w\}$ that can be non-zero are $B(a,u),B(b,v),B(c,w)$. We have 
		\begin{align*}
		\scom[+]{O_{abc}}{O_{uvw}} 	&=   
		B(a,u) (O_{bcvw} 	+  \ascom[+]{O_{bc}}{ O_{vw}  }	 )
		+	\scom[+]{O_a}{O_{bcuvw} } 
			\\	& \quad
			+ B(b,v) (O_{acuw} + \ascom[+]{O_{ac}}{  O_{uw} } ) 
			-  \scom[+]{ O_b }{O_{acuvw} } 
			\\ & \quad	
			+ B(c,w) (O_{abuv} 	 +\ascom[+]{O_{ab} }{  O_{uv}  }  )
			+  \scom[+]{ O_c }{O_{abuvw} }
				\\ & \quad
		 +  B(b,v)B(c,w) \scom[+]{O_a}{O_u}
		 +   B(a,u)B(c,w)\scom[+]{O_b}{O_v}
		  \\
		 & \quad 
		 +B(a,u)B(b,v)\scom[+]{O_c}{  O_w}
		 	 -  B(a,u) B(b,v)B(c,w)/2
	\end{align*}
where expressions for elements of the form $O_{bcvw}$ and $O_{bcuvw}$ are given in Lemma~\ref{l:O45}. 
\end{propo}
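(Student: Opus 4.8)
The plan is to follow the template of the proof of \cref{p:O2O34}: reduce the supercommutator to a Clifford-level anticommutator and then apply the projector $-P_\pm/2$. Since $O_{abc}\in\Cent_{A_\kappa}(\fr{osp}(1|2))$ by \cref{d:OA} (cf.\ \cref{p:ospcent}), and both $O_{abc}$ and $O_{uvw}$ are odd, I would start from $O_{uvw} = -P_\pm(\ca A (\gamma_{uvw}))/2$ in \eqref{e:O}. The centrality of $O_{abc}$ lets me move $P_\pm$ through it via \cref{l:Pdelta}, so that
\[
\scom[+]{O_{abc}}{O_{uvw}} = -\tfrac12 P_\pm\big(\scom[+]{O_{abc}}{\ca A (\gamma_{uvw})}\big).
\]
Thus it suffices to compute the anticommutator $\scom[+]{O_{abc}}{\ca A (\gamma_{uvw})}$ in $A_\kappa$ and to apply $-P_\pm/2$ at the very end, converting each antisymmetrized Clifford word into an $O$-element through \eqref{e:O} and \eqref{e:POuv}.

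Next I would expand $O_{abc}$ by its explicit formula \eqref{e:Ouvw},
\[
O_{abc} = \ca A (\gamma_{abc}) + M_{bc}\gamma_a - M_{ac}\gamma_b + M_{ab}\gamma_c + O_a\ca A (\gamma_{bc}) - O_b\ca A (\gamma_{ac}) + O_c\ca A (\gamma_{ab}),
\]
and distribute the anticommutator with $\ca A (\gamma_{uvw})$ over these three kinds of terms. The angular-momentum factors $M_{bc},M_{ac},M_{ab}$ lie in $H_\kappa$ and hence commute with the Clifford elements, so those terms reduce to $M_{bc}\scom[+]{\gamma_a}{\ca A (\gamma_{uvw})}$ and cyclic variants; the single anticommutators $\scom[+]{\gamma_a}{\ca A (\gamma_{uvw})}$ collapse, by \eqref{e:Clifcom} and the contraction rule, to $2B(a,u)\ca A (\gamma_{vw})$ once the vanishing cross-pairings are imposed. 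The purely Clifford bracket $\scom[+]{\ca A (\gamma_{abc})}{\ca A (\gamma_{uvw})}$ I would evaluate directly from \eqref{e:Clifcom} (equivalently by iterated use of \eqref{e:adso}); the hypothesis that only $B(a,u),B(b,v),B(c,w)$ survive kills all but the contractions along these three pairs. For the mixed terms $\scom[+]{O_a\ca A (\gamma_{bc})}{\ca A (\gamma_{uvw})}$ I would use \eqref{e:gammaO} to rewrite the $O\gamma$-combinations symmetrically and then push the odd central $O_a$ outward, producing supercommutators $\scom[+]{O_a}{\ca A (\gamma_{bcuvw})}$ with five Clifford letters together with antisupercommutators $\ascom{O_a}{\ca A (\gamma_{\cdots})}$ carrying the contracted pairs, exactly as in the proof of \cref{p:O2O34}.

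Applying $-P_\pm/2$ to the resulting Clifford-level identity then finishes the computation: by \cref{l:Pdelta} the scalars $B(\cdot,\cdot)$, the $M$-factors and the central $O_a,O_b,O_c$ pass outside $P_\pm$, while $-P_\pm(\ca A (\gamma_{x_1\dotsm x_k}))/2 = O_{x_1\dotsm x_k}$ by \eqref{e:O} and $-P_\pm(\gamma_x\gamma_y)/2 = O_{xy}-B(x,y)/2$ by \eqref{e:POuv}. The leftover $M$-times-$\gamma$ terms recombine into two-index $O$-elements through \eqref{e:Ouv} and \eqref{e:Ouv2}, and the five- and four-index $O$-elements $O_{bcuvw}$, $O_{bcvw}$ and their permutations are rewritten with \cref{l:O45}. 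Finally I would tidy the antisymmetrized products using \cref{p:OujOun} and \eqref{e:OuvwOx}, collecting terms into the stated combinations $\scom[+]{O_a}{O_{bcuvw}}$, $\ascom[+]{O_{bc}}{O_{vw}}$, $\scom[+]{O_a}{O_u}$, and the scalar $-B(a,u)B(b,v)B(c,w)/2$.

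The main obstacle is one of bookkeeping rather than of idea: the pure-Clifford anticommutator $\scom[+]{\ca A (\gamma_{abc})}{\ca A (\gamma_{uvw})}$ of two antisymmetrized triples produces, before the pairing hypotheses are used, a proliferation of contraction terms, and the signs coming from anticommuting the odd $\gamma$'s past one another, together with the parity signs $(-1)^{|x|}$ introduced when $P_\pm$ acts through odd central factors (\cref{l:Pdelta}), are where errors are easiest to make. Matching the genuinely quadratic- and cubic-in-$B$ correction terms — in particular the constant $-B(a,u)B(b,v)B(c,w)/2$ and the $B(b,v)B(c,w)\scom[+]{O_a}{O_u}$ contributions — against the fully and partially contracted pieces is the delicate endpoint, and it is precisely to suppress the remaining cross-contractions that the hypothesis restricting the nonzero $B$-pairings to $B(a,u),B(b,v),B(c,w)$ (together with $B(a,c)=B(b,c)=0$) is imposed.
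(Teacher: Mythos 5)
Your overall strategy---reduce to the Clifford-level anticommutator $\scom[+]{O_{abc}}{\ca A(\gamma_{uvw})}$ via the centrality of $O_{abc}$ and Lemma~\ref{l:Pdelta}, then apply $-P_\pm/2$ at the end---is exactly the paper's, and your opening reduction is sound. But there is a genuine gap in the middle step: you expand $O_{abc}$ by the $M$-based formula~\eqref{e:Ouvw} and then claim that ``the $M$-factors pass outside $P_\pm$'' by Lemma~\ref{l:Pdelta}. That lemma applies only to factors lying in $\Cent_{A_\kappa}(\fr{osp}(1|2))$, and $M_{uv}$ does not lie there: it centralizes only the even subalgebra $\fr{sl}(2)$ (see~\eqref{e:Centsl2}), and~\eqref{e:Pomegauv} shows explicitly that
\begin{equation*}
P_\pm(M_{uv}) = 2O_{uv} + 2\oO_u\oO_v - 2\oO_v\oO_u \neq M_{uv}\p.
\end{equation*}
Consequently a term such as $2B(a,u)\,M_{bc}\,\ca A(\gamma_{vw})$, which your expansion produces, cannot be projected as $M_{bc}$ times $-P_\pm(\ca A(\gamma_{vw}))/2$; and your subsequent plan to ``recombine the leftover $M$-times-$\gamma$ terms into two-index $O$-elements'' \emph{after} projecting is inconsistent with having already moved $M$ outside $P_\pm$. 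Any recombination via~\eqref{e:Ouv} must happen at the Clifford level, before the projector is applied.

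This is precisely why the paper uses the other decomposition, namely the second expression of Lemma~\ref{l:Oun},
\begin{equation*}
O_{abc} = -\ca A(\gamma_{abc})/2 - 3\,\ca A(O_a\gamma_{bc}) + 3\,\ca A(O_{ab}\gamma_c)\p,
\end{equation*}
in which every non-Clifford factor ($O_a$, $O_{ab}$) is already central, so Lemma~\ref{l:Pdelta} genuinely applies; the two-index pieces then generate brackets of the form $\scom[-]{O_{ab}}{\ca A(\gamma_{cuvw})}$ and $\ascom[+]{O_{ab}}{\ca A(\gamma_{uv})}$, whose projections are disposed of by Proposition~\ref{p:O2O34}---which is exactly why that proposition is established first and invoked at the end of the paper's proof, a step absent from your outline. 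Your argument can be repaired either by switching to this decomposition, or by substituting $M_{bc} = O_{bc} - \ca A(\gamma_{bc})/2 - \oO_b\gamma_c + \oO_c\gamma_b$ (from~\eqref{e:Ouv}) everywhere before applying $-P_\pm/2$; but as written, the projection step fails, and the terms $B(a,u)\,\ascom[+]{O_{bc}}{O_{vw}}$ and the scalar $-B(a,u)B(b,v)B(c,w)/2$ you aim to match would not come out correctly.
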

\begin{proof}
We have
	\begin{align*}
		\scom[+]{O_{abc} }{  \ca A ( \gamma_{uvw} ) } & =
	\scom[+]{ -\ca A ( \gamma_{abc} )/2 - 3\ca A (O_{a}\gamma_{bc}) + 3\ca A (O_{ab}\gamma_c ) }{ \ca A ( \gamma_{uvw} ) } \p.
		\end{align*}		
	Now, using~\eqref{e:Auvw} and~\eqref{e:Auvwx}
	\begin{align*}
	\scom[+]{ -\ca A ( \gamma_{abc} )/2  }{   \ca A ( \gamma_{uvw} ) }
& =
-B(a,u) \ca A ( \gamma_{bcvw} ) - B(b,v) \ca A ( \gamma_{acuw} ) - B(c,w) \ca A ( \gamma_{abuv} ) \\ & \quad  + B(a,u) B(b,v)B(c,w) 
 \p,
\end{align*}
while 		$\scom[+]{ - 3\ca A (O_{a}\gamma_{bc})  }{ \ca A ( \gamma_{uvw} ) } $ becomes 
	\begin{align*}	
 & 
 -\scom[+]{O_a}{ \ca A ( \gamma_{bcuvw} )} 
 - \ascom[-]{O_a}{ B(b,v) \ca A ( \gamma_{cuw} ) + B(c,w) \ca A ( \gamma_{buv} )}
  +   \scom[+]{O_a}{ B(b,v)B(c,w)\gamma_u}
  \\ & 
+  \scom[+]{ O_b }{\ca A ( \gamma_{acuvw} )} 
- \ascom[-]{O_b}{ B(a,u) \ca A ( \gamma_{cvw} ) + B(c,w) \ca A ( \gamma_{auv} ) }
	+  \scom[+]{O_b}{ B(a,u)B(c,w) \gamma_v}
  \\ & 
-  \scom[+]{ O_c }{\ca A ( \gamma_{abuvw} )} 
+ \ascom[-]{O_c}{ B(a,u) \ca A ( \gamma_{bvw} ) + B(b,v) \ca A ( \gamma_{auw} )} 
+ \scom[+]{O_c}{ B(a,u)B(b,v) \gamma_w}
\end{align*}	
and	
	\begin{align*}
		\scom[+]{ 3\ca A (O_{ab}\gamma_{c})  }{ \ca A ( \gamma_{uvw} ) } 
	& =
	\scom[-]{O_{ab}}{ \ca A ( \gamma_{cuvw} ) }+ B(c,w)\ascom[+]{O_{ab} }{  \ca A ( \gamma_{uv} ) }
	\\ & \quad
	-   \scom[-]{O_{ac} }{\ca A ( \gamma_{buvw} ) } + B(b,v) \ascom[+]{O_{ac}}{  \ca A ( \gamma_{uw} )}
	\\ & \quad
	+  \scom[-]{ O_{bc}}{ \ca A ( \gamma_{auvw} )} +B(a,u)  \ascom[+]{O_{bc}}{ \ca A ( \gamma_{vw} ) }
	\p.
\end{align*}	
The result follows after applying $-P_\pm/2 $ and using Proposition~\ref{p:O2O34}.
\end{proof}

\subsubsection{Special bases}

We will use the following notational conventions when a vector of the chosen bases of Section~\ref{s:bases} appears as a subscript index in an element of the form~\eqref{e:O}: 

An element of the set  $\{1,\dotsc,d\}$ will be used to refer to the corresponding element of $\{x_p\}_{p=1}^d$. For instance, $O_{12} = O_{x_1x_2}$.

For $\ell=\lfloor  d/2\rfloor$, an element of the set $\{1,\dotsc,\ell\}$ with a $+$ or $-$ above it, will be used refer to the corresponding element of $\{z_p^\pm\}_{p=1}^\ell$, which is (part of) the $B$-isotropic basis of $V^*$.
If $d$ is odd, an index $0$  will be used to refer to $z_0$.
For instance, $O_{\ib{+}{1}\ib{-}{1}0} = O_{z_1^+z_1^-z_0^{~}}  $

The relations determined above, reduce to the following for elements of the basis $\{x_p\}_{p=1}^d$. The proofs are also easier in this case, so they are included for completeness.  
\begin{corol}
	For $i,j,k,l,m,n$ distinct elements of the set $\{1,\dotsc,d\}$,	
\begin{align}
	\scom[-]{O_{ij}}{O_{ki}}  
	&= O_{jk}+  \GG{i}{j}
	+[O_{ijk},O_i]
	\\ 
	[O_{ij},O_{kl}]  
	&= \frac12( [O_i,O_{jkl}]
	-[O_j,O_{ikl}]-[O_{ijl},O_k]
	+[O_{ijk},O_l]
	)\\
	&= 	[O_i,O_{jkl}]
	-[O_j,O_{ikl}]
	\p,
\end{align}	
where the last equality follows by~\eqref{e:OuvwOx}.
	\begin{align}
	[O_{jk},O_{lmn} ]  & =  [O_j, O_{klmn}] - [O_k,O_{jlmn}]\\
	[O_{jk},O_{jlm} ]  & = -O_{klm} - \ascom{O_k}{O_{lm}} - [O_j,O_{jklm}]\\
	[O_{jk},O_{jkl} ]  & =  -\ascom{O_j}{O_{jl}} - \ascom{O_k}{O_{kl}} \p.
\end{align}	
	\begin{align}\label{e:24}
\scom[+]{O_{ijk}}{O_{ijk}} & =   2\left((O_{i})^2+ 	(O_{j})^2  + 	
								(O_{k})^2 + (O_{ij})^2 + (O_{ik})^2 + 		(O_{jk})^2\right) -\frac12\\ 
	\scom[+]{O_{ijk}}{O_{ijl}} & =  [O_k,O_l] 
	    	  + \ascom{O_{ik}}{O_{il}}+ \ascom{O_{jk}}{O_{jl}}\\
\scom[+]{O_{ijk}}{O_{imn}} & =  O_{jkmn}  	+ \ascom{O_{jk}}{O_{mn}}
	   	  +\scom{O_i}{O_{ijkmn}}\\
	\scom[+]{O_{ijk}}{O_{lmn}} 
 			& = 
			\scom{O_i}{ O_{jklmn}} 
			- \scom{O_j }{ O_{iklmn}}
			+\scom{O_k }{ O_{ijlmn}} \label{e:27}	
			\p.
	\end{align}	
\begin{align}
\scom{O_{jk}}{ O_{jklm}} 
& =  - \ascom{O_j}{ O_{jlm}}  - \ascom{O_k}{O_{klm}} \\
\scom{O_{jk}}{ O_{jlmn}} 
& = - O_{klmn}- \ascom{O_k}{O_{lmn}}   - \scom{O_j}{ O_{jklmn}}  \\
\scom{O_{ij}}{ O_{klmn}} 	& =  \scom{O_i}{ O_{jklmn}}  -   \scom{O_j}{ O_{iklmn}} \p.
\end{align}	
\end{corol}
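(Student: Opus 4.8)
The plan is to read off every identity as the restriction to the $B$-orthonormal basis $\{x_p\}$ (where $B(x_p,x_q)=\delta_{p,q}$) of the bracket formulas already proved in this section: the (unlabeled) proposition evaluating $\scom[-]{O_{ab}}{O_{uv}}$, Proposition~\ref{p:O2O34}, and Proposition~\ref{p:O3O3}, with the Jacobi-type identity~\eqref{e:OuvwOx} of Proposition~\ref{p:OujOun} used to pass between equivalent forms (as flagged for the second displayed relation). Since the indices are distinct, all mixed coefficients $B(a,u),B(b,v),\dots$ appearing in those formulas are $0$ or $1$, so most terms drop out. Two further rules finish the reduction: any $O_{u_1\dotsm u_n}$ with a repeated index vanishes by the skew-symmetry of Definition~\ref{d:OA}, and an antisupercommutator of a homogeneous element with itself collapses to twice its square. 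I would first sort the list by source: the two-index$/$two-index brackets come from the $\scom[-]{O_{ab}}{O_{uv}}$ proposition; the commutators $[O_{jk},O_{lmn}]$, $[O_{jk},O_{jlm}]$, $[O_{jk},O_{jkl}]$ and the four-index commutators $\scom{O_{jk}}{O_{jklm}}$, $\scom{O_{jk}}{O_{jlmn}}$, $\scom{O_{ij}}{O_{klmn}}$ come from the two relations of Proposition~\ref{p:O2O34}; and the block~\eqref{e:24}--\eqref{e:27} of three-index$/$three-index (anti)supercommutators comes from Proposition~\ref{p:O3O3}.

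Because the author remarks that a self-contained derivation is shorter here, I would also present that route. Its engine is that $O_{ab}$ and $O_{abc}$ lie in $\Cent_{A_\kappa}(\mathfrak{osp}(1|2))$, so their adjoint action commutes with the projector $P_\pm$; together with $O_{u_1\dotsm u_n}=-P_\pm(\ca A(\gamma_{u_1\dotsm u_n}))/2$ and~\eqref{e:POuv}, this reduces each bracket of two $O$-elements to a bracket of one $O$-element with a Clifford wedge, followed by an application of $-P_\pm/2$. Concretely I would expand one factor via~\eqref{e:Ouv} (or~\eqref{e:Ouvw}) into its $H_\kappa$-part, its pure-Clifford part $\ca A(\gamma_{\dotsm})$, and its mixed $\oO_u\gamma_v$-part. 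The $M$-terms lie in $H_\kappa$ and commute with the Clifford wedge since $A_\kappa\cong H_\kappa\otimes\ca C$; the pure-Clifford part is handled by the $\fr{so}$-action~\eqref{e:adso}; and the mixed part is handled by~\eqref{e:gammaO} together with~\eqref{e:adso}. For distinct orthonormal indices only the $\delta$-surviving terms remain, and a final $-P_\pm/2$, using Lemma~\ref{l:Pdelta} to pass $P_\pm$ across products with centralizer factors, turns each Clifford wedge back into the corresponding $O$-element.

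The step I expect to be hardest is the parity and sign bookkeeping in the three-index block, where $O_{ijk}$ is odd, the relevant bracket is the antisupercommutator, and these signs must be tracked against the antisymmetrization signs coming from~\eqref{e:adso} and~\eqref{e:gammaO}. The most delicate single identity is the diagonal case~\eqref{e:24}: setting $(a,b,c)=(u,v,w)=(i,j,k)$ in Proposition~\ref{p:O3O3}, every four- and five-index term acquires a repeated index and drops out, each surviving self-antisupercommutator becomes twice a square, and the constant $-\frac12$ is precisely the term $-B(a,u)B(b,v)B(c,w)/2$ from the triple $B$-product. The four- and five-index elements appearing on the right-hand sides (such as $O_{jklmn}$) need not be expanded further, though Lemma~\ref{l:O45} would rewrite them through the one-, two- and three-index generators should a closed form in the generators be wanted.
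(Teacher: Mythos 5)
Your proposal is correct, and it in fact contains both of the routes the paper itself takes: the corollary is announced there as the specialization of ``the relations determined above'' to the basis $\{x_p\}_{p=1}^d$ (your primary route), while the written proof carries out your ``self-contained'' secondary route. Concretely, the paper's proof expands $O_{jkl}$ by the second expression of Lemma~\ref{l:Oun}, as $-e_{jkl}/2-3\ca A(O_je_{kl})+3\ca A(O_{jk}e_l)$, brackets this against the Clifford wedges $e_{jkm}$, $e_{jmn}$, $e_{lmn}$, and then applies $-P_\pm/2$ together with Proposition~\ref{p:O2O34}; note that this expansion, whose coefficients $O_j,O_{jk}$ already lie in the supercentralizer, makes the final application of $-P_\pm/2$ immediate via Lemma~\ref{l:Pdelta}, which is marginally smoother than your expansion through~\eqref{e:Ouv}/\eqref{e:Ouvw} with $M$-terms --- though yours is equally valid, since the $M$-part lives in $H_\kappa$ and drops out of any bracket with a pure Clifford element. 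The one point where you genuinely diverge is the diagonal identity~\eqref{e:24}: the paper obtains it from Proposition~\ref{p:OA2} (stated and proved just after the corollary, by applying $-P_\pm/2$ to $O_Ae_A$), using $\scom[+]{O_{ijk}}{O_{ijk}}=2(O_{ijk})^2$, whereas you specialize Proposition~\ref{p:O3O3} at $(a,b,c)=(u,v,w)=(i,j,k)$, with repeated-index elements vanishing by the skew-symmetry of Definition~\ref{d:OA}, self-antisupercommutators collapsing to twice squares, and the constant $-\tfrac12$ arising as $-B(a,u)B(b,v)B(c,w)/2$. The two derivations of~\eqref{e:24} are equivalent, and yours has the small advantage of avoiding the forward reference to Proposition~\ref{p:OA2}; everything else in your reduction (the distinctness of indices killing the mixed $B$-coefficients, and~\eqref{e:OuvwOx} converting between the two forms of the $[O_{ij},O_{kl}]$ relation) matches the paper's intent exactly.
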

\begin{proof}
For a 3-element set $\{a,b,c\} \subset \{1,\dotsc,d\}$, Proposition~\ref{p:OA2} below gives
	\begin{equation}\label{e:Ouvw2}
		(O_{abc})^2 = -\frac14 + (O_{a})^2+ (O_{b})^2  + (O_{c})^2 + (O_{ab})^2 + (O_{ac})^2 + (O_{bc})^2\p. 
	\end{equation}	
For the second relation, we compute
	\begin{align*}
			[O_{jkl},e_{jkm}] 
			  & = [ - e_{jkl}/2 - 3\ca A (O_{j}e_{kl}) + 3\ca A (O_{jk}e_{l} ),e_{jkm}] \\
			& = 
			-(O_j e_{kl} - O_k e_{jl}+O_l e_{jk} )e_{jkm} - e_{jkm}(e_{jk} O_l -e_{jl} O_k + e_{kl} O_j)	\\*	&\quad
			+(O_{jk} e_{l} -O_{jl} e_{k} + O_{kl} e_j)e_{jkm} + e_{jkm}(e_{j} O_{kl} - e_{k} O_{jl}+e_{l} O_{jk} )
			\\
			& =  O_je_{jlm} -e_{jlm}O_j + O_ke_{klm} -e_{klm}O_k +O_le_m +e_mO_l \\*
			& \quad + O_{jk} e_{jklm} - e_{jklm}O_{jk}  + O_{jl} e_{jm} + e_{jm}O_{jl}+ O_{kl} e_{km} + e_{km}O_{kl}\p.
		\end{align*}
The result follows after applying $-P_\pm/2 $ to both sides and using Proposition~\ref{p:O2O34}.

Similarly, the final two relations follow from
\begin{align*}
& \quad 	[O_{jkl},e_{jmn}] 
	 = [ - e_{jkl}/2 - 3\ca A (O_{j}e_{kl}) + 3\ca A (O_{jk}e_{l} ) ,e_{jmn}] \\
	& =   - e_{klmn}  
	-(O_j e_{kl} - O_k e_{jl}+O_l e_{jk} )e_{jmn} 
	- e_{jmn}(e_{jk} O_l -e_{jl} O_k + e_{kl} O_j)	\\*	&\quad
	+(O_{jk} e_{l} -O_{jl} e_{k} + O_{kl} e_j)e_{jmn} 
	+ e_{jmn}(e_{j} O_{kl} - e_{k} O_{jl}+e_{l} O_{jk} )
	\\
	& =   - e_{klmn} 
	-O_je_{jklmn} -e_{jklmn}O_j
	- O_ke_{lmn} + e_{lmn}O_k 
	+O_le_{kmn} -e_{kmn}O_l  \\*
	& \quad 
	- O_{jk} e_{jlmn} + e_{jlmn}O_{jk}   
	+ O_{jl} e_{jkmn} - e_{jkmn}O_{jl}
	+ O_{kl} e_{mn} + e_{mn}O_{kl}
\p,
\end{align*}				
and
	\begin{align*}
		[O_{ijk},e_{lmn}] 
		& = [ - e_{ijk}/2 - 3\ca A (O_{i}e_{jk}) + 3\ca A (O_{ij}e_{k} ),e_{lmn}] \\
		& = 
		-(O_i e_{jk} - O_j e_{ik}+O_k e_{ij} )e_{lmn} - e_{lmn}(e_{ij} O_k -e_{ik} O_j + e_{jk} O_i)	\\*	&\quad
		+(O_{ij} e_k -O_{ik} e_j + O_{jk} e_i)e_{lmn}
		+ e_{lmn}(e_i O_{jk} - e_j O_{ik}+e_k O_{ij} )\p,
	\end{align*}
after applying $-P_\pm/2 $ and using Proposition~\ref{p:O2O34}.
	\end{proof}

A slight modification of the previous proof also yields	the following relations	
\begin{align*}
	\ascom[-]{O_{jkl}}{O_{jkm}}  & = -O_{lm} - \ascom{O_l}{O_m} 
	+ \ascom{O_{jk} }{ O_{jklm}}\\
	\ascom[-]{O_{jkl}}{O_{jmn}}  & =  - \ascom{O_j}{O_{jklmn}}  
	+ \ascom{O_{jk} }{ O_{jlmn}} + \ascom{O_{jl}}{O_{jkmn}}	
	\p.
\end{align*}				
Together with the relations~(\ref{e:24}--\ref{e:27}) this shows that in $A_\kappa$ a product of elements of the form $O_{uvw}$ for $u,v,w\in V^*$ can be reduced to terms containing at most a single 3-index element.

\begin{propo}\label{p:OA2}
	For	$n \in \{1,\dotsc,d\}$ and $A = \{ a_{1},a_{2},\dotsc,a_{n}\}\subset \{1,\dotsc,d\}$
	\begin{align*}
		(O_A)^2 
		= (-1)^{n(n-1)/2}\bigg( \frac{(n-1)(n-2)}{8} - (n-2)\sum_{a\in A} (O_{a})^2 -\sum_{\{a,b\}\subset A } ({O}_{ab})^2  \bigg)\p.
	\end{align*}
\end{propo}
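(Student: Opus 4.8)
The plan is to avoid any induction and instead compute $(O_A)^2$ directly, as the image under $P_\pm$ of a single Clifford-level product, in the spirit of the other results of Section~\ref{s:rels}. Since the indices in $A=\{a_1,\dots,a_n\}$ are distinct and the basis $\{x_p\}$ is $B$-orthonormal, the vectors $x_{a_1},\dots,x_{a_n}$ are $B$-orthogonal, so $\ca A(\gamma_{a_1\dotsm a_n}) = e_A = e_{a_1}\dotsm e_{a_n}$ and, by Definition~\ref{d:OA}, $O_A = -P_\pm(e_A)/2$. Two elementary facts drive the argument: $e_A^2 = (-1)^{n(n-1)/2}$ in $\ca C$, and $P_\pm(e_B) = -2\,O_B$ whenever $B$ has one or two elements. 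Because $O_A\in\Cent_{A_\kappa}(\fr{osp}(1|2))$, Lemma~\ref{l:Pdelta} yields $P_\pm(O_A\,e_A)=P_\pm(O_A)P_\pm(e_A)=O_A(-2O_A)=-2(O_A)^2$, giving the key starting identity
\[
(O_A)^2 = -\tfrac12\, P_\pm(O_A\, e_A)\p.
\]

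The second step is to expand the left factor $O_A$ inside $O_A\,e_A$ using the \emph{second} expression of Lemma~\ref{l:Oun} (chosen over the first because its degree-two coefficients are the centralizer elements $O_{a_ja_k}$, not $M_{a_ja_k}$), which for $B$-orthogonal indices reads
\[
O_A = -\tfrac{(n-1)(n-2)}{4}\,e_A -(n-2)\sum_{j=1}^n(-1)^{j-1}O_{a_j}\,e_{\hat\jmath} + \sum_{1\le j<k\le n}(-1)^{j+k-1}O_{a_ja_k}\,e_{\widehat{\jmath k}}\p,
\]
where $e_{\hat\jmath}$ and $e_{\widehat{\jmath k}}$ denote the monomial $e_A$ with the factor(s) $e_{a_j}$ (resp.\ $e_{a_j},e_{a_k}$) deleted. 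Multiplying each summand on the right by $e_A$ collapses the Clifford part: $e_A e_A$, $e_{\hat\jmath}e_A$ and $e_{\widehat{\jmath k}}e_A$ reduce to scalars times $1$, $e_{a_j}$ and $e_{a_ja_k}$ respectively. Applying $-\tfrac12 P_\pm$, pulling the centralizer factors $O_{a_j}$ and $O_{a_ja_k}$ out through $P_\pm$ by Lemma~\ref{l:Pdelta}, and using $P_\pm(e_{a_j})=-2O_{a_j}$, $P_\pm(e_{a_ja_k})=-2O_{a_ja_k}$ and $P_\pm(1)=1$, converts the three blocks into a pure scalar, a sum of $(O_{a_j})^2$, and a sum of $(O_{a_ja_k})^2$, which is exactly the shape of the right-hand side.

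The real work, and the only genuine obstacle, is the sign bookkeeping needed to show that the scalars produced by the Clifford collapse are \emph{independent of} $j$ (respectively of $\{j,k\}$) and assemble into the stated coefficients. Writing $e_A = (-1)^{j-1}e_{a_j}e_{\hat\jmath}$ one gets $e_{\hat\jmath}e_A = (-1)^{j-1}(-1)^{n-1}(e_{\hat\jmath})^2 e_{a_j}$ with $(e_{\hat\jmath})^2=(-1)^{(n-1)(n-2)/2}$; the explicit $(-1)^{j-1}$ in the expansion cancels the one from this rewriting, leaving the $j$-independent sign $(-1)^{(n-1)+(n-1)(n-2)/2}=(-1)^{n(n-1)/2}$, and together with the $-\tfrac12$ and the $-2$ from $P_\pm(e_{a_j})$ this produces the coefficient $-(n-2)(-1)^{n(n-1)/2}$ for $\sum_j(O_{a_j})^2$. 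The analogous reduction gives $e_{\widehat{\jmath k}}e_A = (-1)^{j+k-1}(-1)^{(n-2)(n-3)/2}e_{a_ja_k}$, so the $(-1)^{j+k-1}$ factors square away and the residual sign $(-1)^{(n-2)(n-3)/2}=-(-1)^{n(n-1)/2}$ yields the coefficient $-(-1)^{n(n-1)/2}$ for $\sum_{j<k}(O_{a_ja_k})^2$, while the term $-\tfrac{(n-1)(n-2)}{4}e_A e_A$ contributes the scalar $\tfrac{(n-1)(n-2)}{8}(-1)^{n(n-1)/2}$. Summing the three contributions reproduces the asserted formula. I expect no new structural input beyond Lemmas~\ref{l:Oun} and~\ref{l:Pdelta}, and since every term is handled uniformly the computation needs no separate base cases; the degenerate small-$n$ instances ($n=1,2$) simply have the higher blocks absent.
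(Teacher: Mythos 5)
Your proof is correct and takes essentially the same route as the paper: the paper's proof likewise multiplies the last expression of Lemma~\ref{l:Oun} (for the $B$-orthonormal indices, where $\ca A(\gamma_{a_1\dotsm a_n})=e_A$) on the right by $e_A$, collapses the Clifford monomials using $(e_A)^2=(-1)^{n(n-1)/2}$, and applies $-P_\pm/2$ via Lemma~\ref{l:Pdelta}. Your sign bookkeeping simply makes explicit what the paper compresses into the shorthand $O_a e_a (e_A)^2$ and $O_{ab}e_ae_b(e_A)^2$, and it checks out.
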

\begin{proof}
	Note that $(e_A)^2=(-1)^{n(n-1)/2}$ if $|A|=n$. 
	By the last expression of Lemma~\ref{l:Oun} 
	\begin{align*}
		O_A e_A & = -\frac{(n-1)(n-2)}{4}  (e_A)^2
		- (n-2)\sum_{a\in A} O_{a} e_{a}(e_A)^2 
		-\sum_{\{a,b\}\subset A } {O}_{ab} 
		e_ae_b(e_A)^2	\p.
		\p.
	\end{align*}
	Applying  $-P_\pm/2 $ to both sides and using Lemma~\ref{l:Pdelta} yields the desired result.
\end{proof}

\begin{propo}
	For	$u,v,w\in V^*$, we have
	\begin{equation}\label{e:OD}
		\ascom[(-)^d]{ O_{1\dotsm d} }{O_u} = 0\p,\qquad 
		\scom[-]{ O_{1\dotsm d} }{O_{uv}} = 0\p,\qquad 
		\ascom[(-)^d]{ O_{1\dotsm d} }{O_{uvw}} = 0\p.
	\end{equation}		
	
	Moreover, the expression 
	\begin{equation}\label{e:central}
		\Omega = 	(d-2)\sum_{j=1}^d (O_{j})^2  +\sum_{1\leq j<k \leq d } ({O}_{jk})^2  
	\end{equation}		
	is central in $\Cent(\fr{osp}(1|2))$. 

\end{propo}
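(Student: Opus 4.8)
The plan is to reduce both statements to the single ``top'' element $O_{1\dotsm d}$ and the chirality element $\Gamma$ of \eqref{e:Gamma}. Since the basis $x_1,\dotsc,x_d$ is $B$-orthonormal, the antisymmetrization in Definition~\ref{d:OA} is trivial, so $O_{1\dotsm d} = -\frac12 P_\pm(\gamma_{1\dotsm d})$ with $\gamma_{1\dotsm d} = e_1\dotsm e_d = i^{-d(d-1)/2}\Gamma$. The only computational input needed is how $\gamma_{1\dotsm d}$ moves past a homogeneous element: as it lies in the Clifford factor of $A_\kappa\cong H_\kappa\otimes\ca C$, commutes with the (even) algebra $H_\kappa$, and satisfies $\Gamma\gamma_u = (-1)^{d-1}\gamma_u\Gamma$, one obtains
\[
\gamma_{1\dotsm d}\,X = (-1)^{|X|(d-1)}\,X\,\gamma_{1\dotsm d}
\]
for every $\bZ_2$-homogeneous $X\in A_\kappa$, the total grading of $A_\kappa$ being the Clifford parity since $H_\kappa$ is even. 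As $O_{u_1\dotsm u_n}$ has parity $n\bmod 2$, it (anti)commutes with $\gamma_{1\dotsm d}$ with sign $(-1)^{n(d-1)}$.

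For the three relations in \eqref{e:OD} I would treat the generators $O_u$, $O_{uv}$, $O_{uvw}$ uniformly. Each lies in $\Cent_{A_\kappa}(\fr{osp}(1|2))$, so Lemma~\ref{l:Pdelta} together with $P_\pm(\gamma_{1\dotsm d}) = -2O_{1\dotsm d}$ gives, for such a generator $X$,
\[
O_{1\dotsm d}X = -\frac12 P_\pm(\gamma_{1\dotsm d}X)\p,\qquad X O_{1\dotsm d} = -\frac12 P_\pm(X\gamma_{1\dotsm d})\p.
\]
Writing $\epsilon$ for the sign attached to the bracket in \eqref{e:OD}, the relevant combination becomes $-\frac12 P_\pm\big(\gamma_{1\dotsm d}X + \epsilon\, X\gamma_{1\dotsm d}\big) = -\frac12\big((-1)^{n(d-1)}+\epsilon\big)P_\pm\big(X\gamma_{1\dotsm d}\big)$. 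One then checks that the prescribed sign satisfies $\epsilon = -(-1)^{n(d-1)}$ in each case: for the even generator $O_{uv}$ ($n=2$) the bracket is the commutator, $\epsilon=-1$; for the odd generators $O_u,O_{uvw}$ ($n$ odd) the antisupercommutator sign is $(-1)^d$, and $(-1)^{d}+(-1)^{d-1}=0$. Hence all three brackets vanish at once.

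For the centrality of $\Omega$, I would apply Proposition~\ref{p:OA2} with $A=\{1,\dotsc,d\}$, which yields
\[
(O_{1\dotsm d})^2 = (-1)^{d(d-1)/2}\Big(\frac{(d-1)(d-2)}{8} - \Omega\Big)\p,
\]
so that $\Omega$ equals $(O_{1\dotsm d})^2$ up to a sign and an additive constant. By Proposition~\ref{p:ospcent} it then suffices to show $(O_{1\dotsm d})^2$ commutes with $\rho(\widetilde G)$ and with every $O_{uv}$ and $O_{uvw}$ (the single-index $O_u=\oO_u$ being contained in $\rho(\bC[\widetilde G])$). The relations \eqref{e:OD} handle the last two: whether $O_{1\dotsm d}$ commutes or anticommutes with a generator, the sign squares to $+1$. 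For $\rho(\widetilde G)$ I would use Lemma~\ref{l:groupaction}: since $O_{1\dotsm d}$ is alternating and of top degree in its indices, $O_{p(\tilde g)\cdot 1\dotsm p(\tilde g)\cdot d} = \det\!\big(p(\tilde g)\big)O_{1\dotsm d}$, so $\rho(\tilde g)$ (anti)commutes with $O_{1\dotsm d}$ up to the scalar $(-1)^{|\tilde g|d}\det p(\tilde g)=\pm1$, which again squares away. Hence $(O_{1\dotsm d})^2$, and therefore $\Omega$, is central in $\Cent_{A_\kappa}(\fr{osp}(1|2))$.

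The only real difficulty is the sign bookkeeping: keeping track of the chirality sign $(-1)^{d-1}$ through the $\bZ_2$-graded tensor product $H_\kappa\otimes\ca C$ and matching it against the sign prescribed by the (anti)supercommutator in \eqref{e:OD}. Once the parity of each generator and the uniform identity $\epsilon = -(-1)^{n(d-1)}$ are pinned down, the vanishing of the brackets is automatic, and the square of $O_{1\dotsm d}$ delivers the central element $\Omega$.
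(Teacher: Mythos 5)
Your proof is correct. For the three vanishing brackets in \eqref{e:OD} it is essentially the paper's argument: the paper checks, via the explicit expressions \eqref{e:Ov}, \eqref{e:Ouv}, \eqref{e:Ouvw}, that $e_{1\dotsm d}$ (anti)supercommutes appropriately with each generator and then applies $-P_\pm/2$, pulling the centralizer elements through $P_\pm$ by Lemma~\ref{l:Pdelta}; your uniform sign rule $\gamma_{1\dotsm d}\,X = (-1)^{|X|(d-1)}X\,\gamma_{1\dotsm d}$ for homogeneous $X$, combined with $\epsilon = -(-1)^{n(d-1)}$, is the same computation packaged by parity, and your sign bookkeeping matches the statement in all three cases.

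For the centrality of $\Omega$ you take a genuinely different route. The paper observes that $O_{1\dotsm d} = (F^-F^+ - F^+F^- - 1/2)\,e_{1\dotsm d}$, so that $O_{1\dotsm d}\,e_{1\dotsm d}$ is, up to the sign $(-1)^{d(d-1)/2}$, the Scasimir element \eqref{e:SCasiosp}; applying $-P_\pm/2$ and using \eqref{e:SCasiosprel} then identifies $\Omega$, up to constants, with the image of the quadratic Casimir \eqref{e:Casiosp} of $U(\fr{osp}(1|2))$, from which centrality in $\Cent_{A_\kappa}(\fr{osp}(1|2))$ is automatic, with no case analysis over generators. You instead verify commutation of $(O_{1\dotsm d})^2$ with each generator from Proposition~\ref{p:ospcent}: the relations \eqref{e:OD} dispose of $O_{uv}$ and $O_{uvw}$ since the (anti)commutation sign squares to $+1$, and for $\rho(\widetilde G)$ you supply a step absent from the paper's proof, namely Lemma~\ref{l:groupaction} together with skew-multilinearity of the indices, giving $O_{p(\tilde g)\cdot x_1\dotsm p(\tilde g)\cdot x_d} = \det\big(p(\tilde g)\big)O_{1\dotsm d}$ with $\det p(\tilde g) = \pm 1$, which again squares away. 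Both arguments are sound; yours is more elementary and self-contained (only Propositions~\ref{p:OA2} and~\ref{p:ospcent} and the quoted lemmas are needed), whereas the paper's buys a structural explanation: $\Omega$ is central because it is, up to constants, the Casimir image, tied to the square of the Scasimir via $O_{1\dotsm d}$.
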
	
Note that for $d$ odd, the relations~\eqref{e:OD} imply that $O_{1\dotsm d}$ commutes (but not supercommutes as it has odd $\bZ_2$-degree) with everything in $\Cent(\fr{osp}(1|2))$.
\begin{proof}
	 The element $e_{1\dotsm d}$ antisupercommutes with the generators of $\ca C$.
	In this way, for	$u,v,w\in V^*$, using the expressions~\eqref{e:Ov}, \eqref{e:Ouv}, \eqref{e:Ouvw}, we find
	\[
	\ascom[(-)^d]{ e_{1\dotsm d} }{O_u} = 0\p,\qquad 
	\scom[-]{ e_{1\dotsm d} }{O_{uv}} = 0\p,\qquad 
	\ascom[(-)^d]{ e_{1\dotsm d} }{O_{uvw}} = 0\p,
	\]	
	so the relations~\eqref{e:OD} follow after applying  $-P_\pm/2 $. 
	
	By Proposition~\ref{p:OA2}, we have
	\begin{align*}
		(O_{1\dotsm d})^2 =  	(-1)^{d(d-1)/2}\bigg( \frac{(d-1)(d-2)}{8} - (d-2)\sum_{j=1}^d (O_{j})^2  -\sum_{1\leq j<k \leq d } ({O}_{jk})^2  \bigg)\p.
	\end{align*}	
	Using again the properties of $e_{1\dotsm d}$, the element $O_{1\dotsm d}$ equals
	\[
	O_{1\dotsm d} = -\frac12P_\delta(e_{1\dotsm d}) = -\frac12(e_{1\dotsm d} - [F^-,[F^+,e_{1\dotsm d}]]) = (F^-F^+ - F^+F^- - 1/2)e_{1\dotsm d}
	\]	
	so, up to the sign $(-1)^{d(d-1)/2}$, the expression $O_{1\dotsm d} e_{1\dotsm d}$ equals the $\fr{osp}(1|2)$
	Scasimir element \eqref{e:SCasiosp}. 
	Since $-P_\pm( O_{1\dotsm d} e_{1\dotsm d})/2 = O_{1\dotsm d}^2 $, we find by~\eqref{e:SCasiosprel} that, up to constants, $\Omega$ equals the $\fr{osp}(1|2)$
	Casimir element \eqref{e:Casiosp} in $ U(\fr{osp}(1|2))$ and thus is central in $\Cent(\fr{osp}(1|2))$.	
\end{proof}	

Using~\eqref{e:Ouvw2}, we have
\[
\sum_{i<j<k}^d (O_{ijk})^2 = -\frac{d(d-1)(d-2)}{24}  + \frac{(d-1)(d-2)}{2}\sum_{j=1}^d (O_{j})^2  + (d-2)\sum_{ j<k  }^d ({O}_{jk})^2  \p,
\]	
so the following combination is also central
\[
\frac{(d-1)(d-2)}{2}\sum_{j=1}^d (O_{j})^2   +\sum_{1\leq j<k \leq d } ({O}_{jk})^2  + \sum_{i<j<k}^d (O_{ijk})^2  \p.
\]

\begin{lemma}\label{l:OO}  In $A_\kappa$, one has
	\[
	(\oO \otimes \gamma)(B) = \Omega_\kappa = 	(\gamma \otimes \oO)(B)\p.
	\]
	
	
\end{lemma}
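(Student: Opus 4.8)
The plan is to recognize that this statement coincides verbatim with Lemma~\ref{l:Oug} established above, so the same short direct computation settles it; I reproduce the argument here. I would begin from the defining expansion $(\oO \otimes \gamma)(B) = \sum_{p,q=1}^d \oO_{v_p^*}\,B(v_p,v_q)\,\gamma_{v_q^*}$ and substitute the explicit formula~\eqref{e:Ov} for $\oO_{v_p^*}$, pulling the sum over reflections to the front to obtain
\[
(\oO \otimes \gamma)(B) = \frac12 \sum_{s\in\ca S} \kappa(s)\, s\, \gamma_{\alpha_s} \sum_{p,q=1}^d \alpha_s^{\vee}(v_p^*)\, B(v_p,v_q)\, \gamma_{v_q^*}\p.
\]

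First I would collapse the inner double sum. From the definition of $\alpha_s^{\vee}$ together with~\eqref{e:B} one has $\alpha_s^{\vee}(v_p^*) = 2\,B(\alpha_s,v_p^*)/B(\alpha_s,\alpha_s)$, so the second resolution identity in~\eqref{e:BBB} (applied with $u=\alpha_s$, then pushed through the linear map $\gamma$) gives $\sum_{p,q} \alpha_s^{\vee}(v_p^*) B(v_p,v_q)\gamma_{v_q^*} = \frac{2}{B(\alpha_s,\alpha_s)}\,\gamma_{\alpha_s}$. This reduces the whole expression to $\sum_{s\in\ca S} \frac{\kappa(s)\, s\, \gamma_{\alpha_s}\gamma_{\alpha_s}}{B(\alpha_s,\alpha_s)}$. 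The final step is the Clifford relation~\eqref{e:Clifcom}, which yields $\gamma_{\alpha_s}\gamma_{\alpha_s} = B(\alpha_s,\alpha_s)\,1$; every summand then simplifies to $\kappa(s)\,s$, giving $\sum_{s\in\ca S}\kappa(s)\,s = \Omega_\kappa$ by~\eqref{e:Omega}.

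For the second equality $(\gamma\otimes\oO)(B)=\Omega_\kappa$ I would run the identical contraction, using in addition that each reflection $s\in G$ commutes with the odd generators $\gamma_{v_p^*}$ inside $A_\kappa$. This commutation follows from the crossed-product relation~\eqref{e:crossproduct2}, since by~\eqref{e:actO} the action $a_0$ fixes $\ca U_{\bar 1}=V^*\otimes\ca V_{\bar 1}$ pointwise, so $s\,\gamma_{v_p^*}=\gamma_{v_p^*}\,s$. After moving each $\gamma_{v_p^*}$ past $s$, the same index contraction and Clifford simplification apply and return $\Omega_\kappa$. There is essentially no obstacle here: the only points needing care are the pairing convention producing the factor $2/B(\alpha_s,\alpha_s)$ and, for the second identity, the triviality of the $G$-action on the odd part of $\ca U$.
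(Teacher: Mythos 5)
Your proof is correct and follows essentially the same route as the paper's (this lemma duplicates Lemma~\ref{l:Oug}): expand via~\eqref{e:Ov}, contract the double sum with~\eqref{e:BBB} to produce $\sum_{s\in\ca S}\kappa(s)\,s\,\gamma_{\alpha_s}\gamma_{\alpha_s}/B(\alpha_s,\alpha_s)$, and conclude with the Clifford relation~\eqref{e:Clifcom}. Your only addition is making explicit the step the paper leaves implicit for $(\gamma\otimes\oO)(B)$, namely that $s\in G$ commutes with the odd generators by~\eqref{e:actO}, which is exactly right.
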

\begin{proof}
	Using~\eqref{e:Ov} and \eqref{e:BBB}, we have
	\begin{align*}
		(\oO \otimes \gamma)(B) & =\sum_{p,q=1}^d  \oO_{v_p^*} \,B(v_p,v_q) \gamma_{v_q^*} \\	
		& = \sum_{p,q=1}^d  B(v_p,v_q)\frac12\sum_{s\in\mathcal S}  \alpha_s^{\vee}(v^*_p ) \,  \kappa(s) \,s \,	\gamma_{\alpha_s} \, \gamma_{v_q^*}\\
		& = \sum_{s\in\mathcal S}   \,  \kappa(s) \,s \,	\gamma_{\alpha_s} \, \gamma_{\alpha_s}/B(\alpha_s,\alpha_s)
		\p.\qedhere
	\end{align*}	
	%
\end{proof}

\begin{corol}
	For $i,j,k,r,s,t$ distinct elements of the set $\{1,\dotsc,d\}$,	
	\begin{align*}
		[O_{ij},O_{ki}]  
		&= O_{jk}+  \GG{i}{j}
		+[O_{ijk},O_i]
		\\ 
		[O_{ij},O_{kl}]  
		&= \frac12( [O_i,O_{jkl}]
		-[O_j,O_{ikl}]-[O_{ijl},O_k]
		+[O_{ijk},O_l]
		)\\
		&= 	[O_i,O_{jkl}]
		-[O_j,O_{ikl}]
		\p,
	\end{align*}	
	where the last equality follows by~\eqref{e:OuvwOx}.
	
	\begin{align*}
		[O_{jk},O_{lmn} ]  & =  [O_j, O_{klmn}] - [O_k,O_{jlmn}]\\
		[O_{jk},O_{jlm} ]  & = -O_{klm} - \ascom{O_k}{O_{lm}} - [O_j,O_{jklm}]\\
		[O_{jk},O_{jkl} ]  & =  -\ascom{O_j}{O_{jl}} - \ascom{O_k}{O_{kl}} \p.
	\end{align*}
	
	\begin{align*}
		\scom[+]{O_{ijk}}{O_{ijk}} & =   2\left((O_{i})^2+ 	(O_{j})^2  + 	
		(O_{k})^2 + (O_{ij})^2 + (O_{ik})^2 + 		(O_{jk})^2\right) -\frac12\\ 
		\scom[+]{O_{ijk}}{O_{ijt}} & =  [O_c,O_t] 
		+ \ascom{O_{ik}}{O_{it}}+ \ascom{O_{jc}}{O_{jt}}\\
		\scom[+]{O_{ijk}}{O_{ist}} & =  O_{jkst}  	+ \ascom{O_{jk}}{O_{st}}
		+\scom{O_i}{O_{ijkst}}\\
		\scom[+]{O_{ijk}}{O_{rst}} 
		& = 
		\scom{O_i}{ O_{jkrst}} 
		- \scom{O_j }{ O_{ikrst}}
		+\scom{O_j }{ O_{ijrst}} 	
		\p.
	\end{align*}	
	\begin{align*}
		\scom{O_{jk}}{ O_{jklm}} 
		& =  - \ascom{O_j}{ O_{jlm}}  - \ascom{O_k}{O_{klm}} \\
		\scom{O_{jk}}{ O_{jlmn}} 
		& = - O_{klmn}- \ascom{O_k}{O_{lmn}}   - \scom{O_j}{ O_{jklmn}}  \\
		\scom{O_{ij}}{ O_{klmn}} 	& =  \scom{O_i}{ O_{jklmn}}  -   \scom{O_j}{ O_{iklmn}} \p.
	\end{align*}	
\end{corol}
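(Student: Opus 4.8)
The plan is to obtain every identity by applying the projection $-P_\pm/2$ to a companion relation that lives purely in the Clifford algebra $\ca C$, exploiting that for distinct basis indices $B(x_j,x_k)=\delta_{j,k}$, so that almost all of the cross terms produced by the bilinear form in Propositions~\ref{p:O2O34} and~\ref{p:O3O3} drop out. In principle each line is a specialization of those two propositions, but the direct route is cleaner: the starting point is the explicit Clifford expansion of $O_A$ supplied by Lemma~\ref{l:Oun}, which writes each $O_A$ as a sum of products of a coefficient central for $\fr{osp}(1|2)$ (built from the $O_a$ and $O_{ab}$) with a Clifford monomial $e_A$, the antisymmetrization $\ca A$ being trivial for distinct orthonormal indices.

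First I would dispatch the purely quadratic relations. For $\scom[+]{O_{ijk}}{O_{ijk}}=2(O_{ijk})^2$ I invoke Proposition~\ref{p:OA2} with $|A|=3$, which already expands $(O_A)^2$ in terms of the $(O_a)^2$ and $(O_{ab})^2$. For every bracket relation the key computation is instead a commutator or anticommutator of the shape $[O_{ab},e_D]$ or $[O_{abc},e_D]$ carried out inside $\ca C$ using only $e_je_k+e_ke_j=2\delta_{j,k}$ and the adjoint action~\eqref{e:adso}. Because the indices are distinct, the sole surviving contractions occur along a repeated index, collapsing the bulky general formulas to the handful of terms in the statement. Having computed the Clifford-side bracket, I apply $-P_\pm/2$ to both sides: by Lemma~\ref{l:Pdelta} the projector passes through the central factors $O_a,O_{ab}$ and sends each monomial $e_A$ to $-2O_A$, while Proposition~\ref{p:O2O34} reassembles the output into the stated (anti)supercommutators.

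Throughout I track the $\bZ_2$-degree $|A|\bmod 2$ of each $O_A$, since this fixes whether a given bracket is a supercommutator $\scom[-]{a}{b}$ or an antisupercommutator $\scom[+]{a}{b}$, and hence which sign conventions apply; the identity~\eqref{e:OuvwOx} is what lets me pass between the two equivalent forms displayed in the first block. The three-with-three relations (for example $\scom[+]{O_{ijk}}{O_{rst}}$ with all indices distinct, or the partially overlapping cases $\scom[+]{O_{ijk}}{O_{ist}}$ and $\scom[+]{O_{ijk}}{O_{ijt}}$) are exactly the specializations of Proposition~\ref{p:O3O3} in which only the pairings along shared indices are nonzero, and I would cross-check the direct Clifford computation against that proposition.

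The main obstacle will be the combinatorial proliferation of terms in the four- and five-index relations, where the raw Clifford bracket yields many monomials and one must correctly isolate those surviving under the distinctness hypothesis and then match them to the already intricate output of Propositions~\ref{p:O2O34} and~\ref{p:O3O3}. The delicate, error-prone part is the sign bookkeeping from reordering Clifford generators past one another combined with the $(-1)^{|a||b|}$ factors in the (anti)supercommutators; the cleanest safeguard is to reduce everything to antisymmetrized Clifford identities via Lemma~\ref{l:Ogammas} and Lemma~\ref{l:O2gammas} before projecting, since those lemmas already encode the cancellations needed to keep the signs under control.
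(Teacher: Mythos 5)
Your proposal is correct and follows essentially the same route as the paper's own proof (given for the identical corollary stated earlier with indices $l,m,n$ in place of $r,s,t$): the squared relation via Proposition~\ref{p:OA2}, and each bracket obtained by expanding the three-index elements through Lemma~\ref{l:Oun}, computing against Clifford monomials $e_D$ with only the $\delta$-contractions along repeated indices surviving, then applying $-P_\pm/2$ via Lemma~\ref{l:Pdelta} and reassembling with Proposition~\ref{p:O2O34}, with \eqref{e:OuvwOx} justifying the alternate form in the first block. Your additional cross-check against Proposition~\ref{p:O3O3} and the reduction to antisymmetrized identities via Lemmas~\ref{l:Ogammas} and~\ref{l:O2gammas} are consistent safeguards rather than a departure from the paper's argument.
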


For the $B$-isotropic basis we have the following. Fix $j,k \in \{1\dotsc,\ell\}$ with $j\neq k$ and let $u\in X$ such that $B(z^\pm_j,u) = 0 = B(z^\pm_k,u)$, then
\begin{align*}
	[O_{\ib{+}{j}z_j^-},O_{z_j^\pm u}]  
	&= \pm  2 (O_{z_j^\pm u}+  \GG{z_j^\pm}{u})
	-[O_{z_j^+z_j^-u},O_{z_j^\pm}]
	\\ 
	[O_{z_j^\pm z_k^\pm},O_{z_j^\mp u}]  
	&= \pm  2 (O_{z_j^\pm u}+  \GG{z_j^\pm}{u})
	-[O_{z_j^+z_j^-u},O_{z_j^\pm}]
	\\ 
	[O_{ij},O_{kl}]  
	&= \frac12( [O_i,O_{jkl}]
	-[O_j,O_{ikl}]-[O_{ijl},O_k]
	+[O_{ijk},O_l]
	)\\
	&= 	[O_i,O_{jkl}]
	-[O_j,O_{ikl}]
	\p,
\end{align*}

\subsection{Generalized symmetries} \label{s:gensyms2}

Recall \eqref{e:Q}, given in Section~\ref{s:gensyms}, and the $\fr{osp}(1|2)$-elements~\eqref{e:osp}.
Denote $X \colonequals (x^+ \odot \gamma)(B) = \sum_{j=1}^d x_je_j$ and  $D  \colonequals (x^- \odot \gamma)(B) = \sum_{j=1}^d y_je_j$. 

\begin{propo}\label{p:gensym2}
	For $u \in V^*$, the element
	\begin{equation}\label{e:gensym}
		R_{u} \colonequals Q^-(\gamma_u) = (H-1)\gamma_u + 
	  X \, \beta(u)
	\end{equation}
is a generalized symmetry of $D$.
\end{propo}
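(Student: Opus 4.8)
The plan is to recognize that the operator $D=(x^-\odot\gamma)(B)$ is, by the normalization in~\eqref{e:osp}, exactly $\sqrt2\,F^-$; consequently an element of $A_\kappa$ is a generalized symmetry of $D$ if and only if it is one of $F^-$. It therefore suffices to exhibit $R_u$ as a generalized symmetry of $F^-$, and the natural tool is Proposition~\ref{p:gensym} applied to $a=\gamma_u$. To invoke it I must check its single hypothesis, that $[E^-,\gamma_u]=b\,F^-$ for some $b\in A_\kappa$, and separately I must confirm the explicit formula $Q^-(\gamma_u)=(H-1)\gamma_u+X\beta(u)$ recorded in~\eqref{e:gensym}.

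First I would verify the hypothesis in its strongest form, $[E^-,\gamma_u]=0$, so that one may take $b=0$. This is immediate from the identification $A_\kappa\cong H_\kappa\otimes\ca C$: the generator $E^-=-\frac{1}{2}(x^-\odot x^-)(B)$ is a product of two elements of $V\subset H_\kappa$, hence an even element lying in the factor $H_\kappa$, whereas $\gamma_u\in\ca C$; in a $\bZ_2$-graded tensor product an even element of one factor commutes with everything in the other. The same vanishing can be read off directly from~\eqref{e:3} with $\xi_1=\xi_2=x^-$ and $\eta=\gamma$, since its right-hand side is a multiple of $b(x^-,\gamma)$, which is zero by consistency of $b$.

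Next I would compute $R_u$ explicitly. Applying~\eqref{e:1} with $\xi_1=x^-$ and $\eta=\gamma$, and using $b(\gamma,\gamma)=2$ together with $b(x^-,\gamma)=0$, gives $[(x^-\odot\gamma)(B),\gamma_u]=2\beta(u)$, and hence $[F^-,\gamma_u]=\sqrt2\,\beta(u)$. Substituting this into the definition~\eqref{e:Q} of $Q^-$ and using $X=\sqrt2\,F^+$ yields $Q^-(\gamma_u)=(H-1)\gamma_u+F^+[F^-,\gamma_u]=(H-1)\gamma_u+X\beta(u)$, matching~\eqref{e:gensym}. With the hypothesis $[E^-,\gamma_u]=0$ in hand, Proposition~\ref{p:gensym} then asserts that $Q^-(\gamma_u)$ is a generalized symmetry of $F^-$, and by $D=\sqrt2\,F^-$ it is a generalized symmetry of $D$.

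I do not expect a genuine obstacle here, since the argument is a direct application of Proposition~\ref{p:gensym} combined with the two structural facts above. The only delicate point is the scalar and sign bookkeeping: one must consistently track the factor $\sqrt2$ relating the normalized root vectors $F^\pm$ to $D$ and $X$, and the sign of the second summand in $Q^-$ from~\eqref{e:Q}, so that the resulting expression reproduces~\eqref{e:gensym} precisely rather than with an opposite sign.
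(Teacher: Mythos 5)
Your strategy is exactly the paper's: its proof of Proposition~\ref{p:gensym2} consists precisely of invoking Proposition~\ref{p:gensym} with $a=\gamma_u$ and using \eqref{e:1} of Lemma~\ref{l:lemma3} to compute $[F^-,\gamma_u]=\sqrt2\,\beta(u)$. Your explicit verification of the hypothesis $[E^-,\gamma_u]=0$ (so $b=0$), via the graded tensor decomposition or via \eqref{e:3}, and the observation $D=\sqrt2\,F^-$, are correct and are steps the paper leaves implicit, so in that sense your write-up is more complete than the original.

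However, the point you flagged as ``delicate sign bookkeeping'' is not mere bookkeeping, and you did not actually resolve it: the definition \eqref{e:Q} gives $Q^-(a)=(H-1)a+F^+[F^-,a]$, which is the version you substitute into, while Proposition~\ref{p:gensym} states \emph{and its proof establishes} the generalized-symmetry property for $(H-1)a-F^+[F^-,a]$; the computation there only closes, via $F^-Q^-(a)=(-1)^{|a|}\bigl(Q^-(a)+a-(-1)^{|a|}F^+b\bigr)F^-$, with the minus sign. So you invoke Proposition~\ref{p:gensym} for an element carrying the opposite sign to the one it certifies. The discrepancy matters: writing $v=\beta(u)$ and using $[F^-,\gamma_u]=\sqrt2\,v$, $[F^-,v]=0$, $F^-(H-1)=HF^-$ and $F^-F^+=H-F^+F^-$, one finds for the plus sign
\[
F^-\bigl((H-1)\gamma_u + Xv\bigr) \;=\; 2\sqrt2\,Hv \;-\; \bigl(H\gamma_u+\sqrt2\,F^+v\bigr)F^-\,,
\]
and for $d\geq 2$ the leftover term $2\sqrt2\,Hv$ is not a right multiple of $F^-$ (apply both sides to a degree-one element of $\ker D$ in the polynomial--spinor module, e.g.\ for $\kappa=0$), whereas the minus sign gives exactly $F^-\bigl((H-1)\gamma_u - Xv\bigr)=\bigl(-H\gamma_u+\sqrt2\,F^+v\bigr)F^-$. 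Thus the element that Proposition~\ref{p:gensym} actually certifies is $(H-1)\gamma_u - X\beta(u)$, and either \eqref{e:Q} (and with it \eqref{e:gensym}) or the sign in Proposition~\ref{p:gensym} carries a typo. To be fair, the paper's own one-line proof reproduces the identical inconsistency, so your blind attempt faithfully mirrors the source; but to make the argument airtight you must adopt the minus-sign $Q^-$ throughout, or rerun the proof of Proposition~\ref{p:gensym} and observe that the plus-sign version fails.
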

\begin{proof}
	Follows by Proposition~\ref{p:gensym}, using~\eqref{e:1}	of Lemma~\ref{l:lemma3} to work out
	\begin{equation}
 Q^{-}(\gamma_u)=(H-1)\gamma_u+ F^{+}  [ F^{-}, \gamma_u]	\p.	
	\end{equation}
	\end{proof}

\subsection{Representations and Howe correspondence}
\marginpar{TODO: put in TeX}

\section*{Appendix}

\end{document}